\renewcommand{\Im}{\mbox{Im}}
\newtheoremstyle{theoremstyle}
  {10pt}      
  {5pt}       
  {\itshape}  
  {}          
  {\bfseries} 
  {:}         
  {.5em}      
  {}          
\newtheoremstyle{examplestyle}
  {10pt}      
  {5pt}       
  {}          
  {}          
  {\bfseries} 
  {:}         
  {.5em}      
  {}          
\theoremstyle{theoremstyle}
\newtheorem{theorem}{Theorem}[section]
\newtheorem*{theorem*}{Theorem}
\newtheorem*{theorema*}{Theorem A}
\newtheorem*{theoremb*}{Theorem B}
\newtheorem*{theoremc*}{Theorem C}
\newtheorem{lemma}[theorem]{Lemma}
\newtheorem*{lemma*}{Lemma}
\newtheorem{proposition}[theorem]{Proposition}
\newtheorem*{proposition*}{Proposition}
\newtheorem{corollary}[theorem]{Corollary}
\newtheorem*{corollary*}{Corollary}
\newtheorem*{counterexample*}{Counterexample}
\theoremstyle{examplestyle}
\newtheorem*{gp*}{Geometric Principle}
\newtheorem{definition}[theorem]{Definition}
\newtheorem{definition*}{Definition}
\newtheorem{remark}[theorem]{Remark}
\newtheorem{remark*}{Remark}
\newtheorem{convention*}{Convention}
\newtheorem{conjecture*}{Conjecture}
\newtheorem{notation*}{Notation}
\newtheorem{stl*}{Structure Lemma}
\newtheorem*{tha*}{Theorem A}
\newtheorem*{thb*}{Theorem B}
\newtheorem*{thc*}{Theorem C}
\newcommand{\comment}[1]{}
\newcommand{\rk}{\operatorname{rk}}
\newcommand{\Span}{\operatorname{span}}
\newcommand{\auxqed}[1][0.5]{\vskip -\belowdisplayskip\vskip-#1\baselineskip}
\newcommand{\eins}{{\mathchoice
{\mathrm 1\mskip-4.2mu\mathrm l}{\mathrm 1\mskip-4.2mu\mathrm l}
{\mathrm 1\mskip-3.9mu\mathrm l}{\mathrm 1\mskip-4.0mu\mathrm l}}}
\newcommand{\owedge}{\mathbin{\vcenter{\hbox{$\scriptstyle\bigcirc\mkern-14.7mu\wedge\mkern 2.7mu$}}}}
\newcommand{\N}{\mathbb{N}}
\newcommand{\Z}{\mathbb{Z}}
\newcommand{\R}{\mathbb{R}}
\newcommand{\C}{\mathbb{C}\mkern1mu}
\newcommand{\F}{\mathbb{F}\mkern1mu}
\renewcommand{\H}{\mathbb{H}\mkern1mu}
\newcommand{\Ca}{\mathbb{O}}
\newcommand{\FP}{\mathbb{F\mkern1mu P}}
\newcommand{\CP}{\mathbb{C\mkern1mu P}}
\newcommand{\Sph}{\mathbb{S}}
\DeclareMathOperator{\tr}{trace}
\newcommand{\bmat}{\left(\begin{smallmatrix}}
\newcommand{\emat}{\end{smallmatrix}\right)}
\newcommand{\SO}{\mathrm{SO}}
\newcommand{\SU}{\mathrm{SU}}
\newcommand{\Gtwo}{\mathrm{G}_2}
\begin{document}
\title{A new structural approach to isoparametric hypersurfaces in spheres}
\title[A new structural approach to isoparametric hypersurfaces in spheres]{A new structural approach to isoparametric hypersurfaces in spheres}
\author{Anna Siffert$^1$}
\subjclass[2010]{Primary 53C40; Secondary 53C55, 53C30}
\address{Anna Siffert, Max Planck Institute for Mathematics, Vivatsgasse 7, 53111 Bonn, Germany}
\email{siffert@mpim-bonn.mpg.de}
\thanks{${}^1$ This work was finished while the author was supported by DFG grant SI 2077/1-1.}

\begin{abstract}
In this paper we show that the longstanding problem of classifying all isoparametric hypersurfaces in spheres with six different principal curvatures is still not complete. 
Moreover, we develop a structural approach that may be helpful for such a classification.
Instead of working with the isoparametric hypersurface family in the sphere, we consider the associated Lagrangian submanifold of the real Grassmannian of oriented $2$-planes in $\R^{n+2}$. We obtain new geometric insights into classical invariants and identities in terms of the geometry of the Lagrangian submanifold.
\end{abstract}

\maketitle

\section*{Introduction}
Originally, isoparametric hypersurfaces were defined to be the level sets of isoparametric functions, i.e., functions on a real space form whose gradient norm and Laplacian are constant on the level sets. 
This condition translates into the equivalent geometric condition that the principal curvatures of the hypersurfaces are constant. 
 The classifications of isoparametric hypersurfaces in the cases where the ambient space is Euclidean 
or hyperbolic space were settled soon by Somigliana \cite{so}, Segre \cite{se}, and Cartan \cite{cartan1}-\cite{cartan4}. 
In contrast, when the ambient space is a sphere, the number $g$ of distinct principal curvatures can 
be greater than two, which makes a classification much more difficult.
In this paper we henceforth consider the case where the ambient space is a sphere.

\smallskip

Cartan \cite{cartan1}-\cite{cartan4} 
classified isoparametric hypersurfaces with $g\leq 3$, and showed that they are all homogeneous, i.e. orbits of isometric group actions on $\Sph^{n+1}$.
The problem was picked up again by M\"unzner \cite{munzner, munzner2} who showed that the number of distinct principal
 curvatures $g$ can be only $1$, $2$, $3$, $4$, or $6$, and gave restrictions for the
 multiplicities as well. 
 The possible multiplicities of the curvature distributions were classified in \cite{munzner2}, \cite{abresch}, \cite{stolz},
 and coincide with the multiplicities in the known examples.
The situation in the case $g = 4$ is more complex since there exist infinitely many isoparametric hypersurfaces
 and infinitely many of them are inhomogeneous --- see Cecil \cite{cecil} or Thorbergsson \cite{tb2} for very good recent surveys of this case. In the case $g=6$ all multiplicities $m$ coincide and equal either $1$ or $2$, and precisely two homogeneous examples are known. Dorfmeister and Neher \cite{dn} conjectured that all isoparametric hypersurfaces with $g=6$ are homogeneous and
 in the same paper settled this conjecture in affirmative for $m=1$. 
 In the remaining open case $m = 2$, Miyaoka in \cite{mi2}, \cite{mierr} proposed how to establish homogeneity. 
 However, in the appendix we give a counterexample to one crucial proposition in \cite{mi2}, \cite{mierr}.
 Thus the classification of isoparametric hypersurfaces with $g=6$ and $m=2$ is still open.
 
\smallskip

In the present paper we develop a new structural approach to isoparametric hypersurfaces in spheres, unifying many of the known geometric properties. 

\smallskip

The basic idea is as follows. Instead of working 
with the family of parallel surfaces $F_t:M^n\rightarrow \Sph^{n+1}$, with normal 
field $\nu_t\in\Gamma(\nu M^n)$, one considers the associated submanifold of the real Grassmannian of oriented $2$-planes in $\R^{n+2}$
$$\frak{L}: M^n\rightarrow Gr^{+}_2(\R^{n+2})\subset \CP^{n+1}$$ 
by sending $p\in M^n$ to the $2$-plane spanned by $F_t(p)$ and $\nu_t(p)$.
One easily sees that $\frak{L}$ does not depend on $t$, and in \cite{palm1, palm2}
it was observed that for any submanifold of $\Sph^{n+1}$ the associated submanifold $\frak{L}(M)\subset Gr^{+}_2(\R^{n+2})$ 
is Lagrangian with respect to the natural symplectic structure.

\smallskip

We endow the Lagrangian submanifold with a set of invariants 
which arise naturally:
the metric induced via the canonical K\"ahler metric $g_{Q}$ on $Gr^{+}_2(\R^{n+2})$, i.e., $\hat{g}=\frak{L}^{*}g_{Q}$; the symmetric
tensor $\alpha$ $$\alpha(X,Y,Z)=g_t((\nabla_X^tA_t)Y,Z),$$ where $A_t$ denotes the shape operator of $F_t$ with
 respect to $\nu_t$, $X,Y,Z\in\Gamma(TM)$; and $$B\otimes B^{-1}:=B_t\otimes B_t^{-1},$$
 where $B_{t}:\Gamma(TM)\rightarrow \Gamma(TM)\otimes\C$ is defined via $B_{t}=(A_{t}+i\eins)\,(A_{t}-i\eins)^{-1}.$
The set of invariants $(\hat{g},\alpha,B\otimes B^{-1})$ depends only on the isoparametric family it is contained in.

\smallskip

The tensor $\alpha$ is one of the fundamental invariants used in the previous classification approaches though
usually encoded in some much less invariant Maurer-Cartan forms $\Lambda$.
The really interesting fact is that $\alpha$ coincides - up to a factor of two - with the 
second fundamental form of the Lagrangian submanifold, which gives a new, 
geometric interpretation for this important tensor.

 \begin{theorema*}
The tensor $\alpha$ coincides, up to a factor of two, with the second fundamental form of the Lagrangian submanifold. 
\end{theorema*}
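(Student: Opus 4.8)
The plan is to work with the tautological complex lift of $\mathfrak{L}$ and to read off the second fundamental form from a single application of the Gauss--Weingarten equations of $F_t$ in $\Sph^{n+1}\subset\R^{n+2}$. I identify an oriented $2$-plane $V=\mathfrak{L}(p)$ with the complex line spanned by $Z:=F_t+i\nu_t\in\C^{n+2}$, so that $Gr^{+}_2(\R^{n+2})$ is realized as the complex quadric $\{[z]:\sum_a z_a^2=0\}\subset\CP^{n+1}$. Since $F_t,\nu_t$ are orthonormal, $|Z|^2=2$ and the complex bilinear form vanishes, confirming that $Z$ lands in the affine cone over the quadric. First I would record the two basic derivatives $dF_t(X)=X$ and $d\nu_t(X)=-A_tX$ (both tangent to $M$) for $X\in TM$, whence
$$dZ(X)=(\eins-iA_t)X,\qquad \langle dZ(X),Z\rangle_{\C}=0.$$
The second identity says the lift is horizontal, so the Fubini--Study pull-back is $\hat g(X,Y)=\tfrac12\,\mathrm{Re}\langle dZ(X),dZ(Y)\rangle_{\C}=\tfrac12\,g_t((\eins+A_t^2)X,Y)$, and $T_{\mathfrak{L}(p)}\mathfrak{L}(M)=\Span_{\R}\{dZ(E_k)\}$ for an orthonormal principal frame $E_k$ of $TM$, with $dZ(E_k)=(1-i\lambda_k)E_k$.

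Next I would differentiate once more in the flat ambient connection $D$ of $\C^{n+2}$, using the Gauss formula $D_XY=\nabla^t_XY+g_t(A_tX,Y)\nu_t-g_t(X,Y)F_t$ for $M\subset\Sph^{n+1}\subset\R^{n+2}$ together with $D_X(A_tY)=(\nabla^t_XA_t)Y+A_t\nabla^t_XY+g_t(A_tX,A_tY)\nu_t-g_t(X,A_tY)F_t$. Sorting the result into its $TM$-, $\nu_t$- and $F_t$-parts gives
$$D_X\bigl(dZ(Y)\bigr)=dZ(\nabla^t_XY)\;-\;i(\nabla^t_XA_t)Y\;+\;(\text{terms along }F_t,\nu_t).$$
The three pieces have distinct geometric meaning: $dZ(\nabla^t_XY)$ is tangent to $\mathfrak{L}(M)$; the $F_t,\nu_t$-terms lie in $\Span_{\C}\{Z,\bar Z\}$, i.e.\ in the vertical line $\C Z$ (invisible in $\CP^{n+1}$) together with the $\bar Z$-direction, which is exactly the normal of the quadric inside $\CP^{n+1}$. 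Hence neither of these contributes to the second fundamental form of $\mathfrak{L}(M)$ within $Q=Gr^{+}_2(\R^{n+2})$, and the \emph{only} surviving contribution is $-i(\nabla^t_XA_t)Y$. Since $N_k:=J\,dZ(E_k)$ spans the normal bundle (the Lagrangian relation $NM=J\,TM$), projecting $-i(\nabla^t_XA_t)Y$ onto it and pairing against $J\,dZ(W)$ makes the factors $1+\lambda_k^2$ coming from $|J\,dZ(E_k)|^2_{g_Q}$ cancel, yielding, up to the normalization constant $c$ of $g_Q$,
$$g_Q\bigl(\mathrm{II}(X,Y),\,J\,dZ(W)\bigr)=-\,c\,g_t\bigl((\nabla^t_XA_t)Y,W\bigr)=-\,c\,\alpha(X,Y,W).$$
This is the asserted proportionality; Codazzi's equation $(\nabla^t_XA_t)Y=(\nabla^t_YA_t)X$ simultaneously explains why $\mathrm{II}$ is symmetric and why the associated trilinear form is totally symmetric, matching the fact that $\alpha$ is a symmetric $3$-tensor.

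The routine parts are the two differentiations and the $TM$/$F_t$/$\nu_t$ bookkeeping, which I would present as a short computation in the principal frame, where $(\nabla^t_XA_t)Y=\sum_r\alpha(X,Y,E_r)E_r$ and $-iE_r$ has $Q$-normal component $-\tfrac{1}{1+\lambda_r^2}N_r$. The main obstacle is the one conceptual point that must be argued cleanly rather than computed blindly: that passing from the flat derivative $D$ to the Levi--Civita connection of $(Q,g_Q)$ discards precisely the $\C Z$- and $\bar Z$-components and nothing else. I would establish this through the nested Gauss formula for $\mathfrak{L}(M)\subset Q\subset\CP^{n+1}$ --- first the Hopf submersion $\Sph^{2n+3}\to\CP^{n+1}$ reduces the Fubini--Study covariant derivative to the horizontal part of $D$, and then the quadric's second fundamental form in $\CP^{n+1}$ is carried entirely by the $\bar Z$-direction, so that its removal leaves exactly the intrinsic $Q$-normal part. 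Once this is in place, tracking the single constant $c$ (fixed by the chosen normalization of $g_Q$ and tied to $|Z|^2=2$) produces the factor of two in the statement.
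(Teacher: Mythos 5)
Your proposal is correct and follows essentially the same route as the paper: both lift $\mathfrak{L}$ horizontally (your $Z=F_t+i\nu_t$ is the paper's $\hat F_t$ up to the $\tfrac{1}{\sqrt2}$ normalization), differentiate with the flat connection of $\C^{n+2}$ using the Gauss--Weingarten equations of $M\subset\Sph^{n+1}\subset\R^{n+2}$, and extract $\alpha$ by pairing the surviving term $-i(\nabla^t_XA_t)Y$ against the Lagrangian normal direction $J\,dZ(W)$. Your explicit identification of the discarded directions as $\C Z\oplus\C\bar Z$ (Hopf vertical plus quadric normal) is exactly the content of the paper's horizontality lemma and its reduction $\nabla^{Q}\to\nabla^{St}\to d$ under that pairing, so the two arguments differ only in presentation.
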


The introduction of the set of invariants $(\hat{g},\alpha,B\otimes B^{-1})$ is justified by the fact 
that they allow us to formulate all relevant identities in a compact way.

\smallskip

For the theory of isoparametric hypersurfaces in spheres, the so-called Weyl Identities are of utmost importance.
The classical Weyl Identities depend on several indices. 
In terms of the invariants described above, these multiple identities unify into one tensor identity, see Theorem\,\ref{weylt}.

\smallskip

Another set of equations which can easily be formulated in terms of $\alpha$ are the Symmetry identities:
the pullback of $\alpha$ under the reflections through each of the focal manifolds coincides with the negative of $\alpha$.
 So far, all these considerations are completely general.

\smallskip

Below we restrict ourselves to the case $g=6$, and give several reformulations for homogeneity.
Here, we just mention the following reformulation, which  relates homogeneity to geometric properties of the Lagrangian submanifold.
For other reformulations see Section \ref{sec4}.

\begin{theoremb*}
The homogeneity of isoparametric hypersurfaces with $g=6$ different principal curvatures is equivalent to
\begin{align*}
R(\pi_iX,\pi_{i+3}Y,\pi_{i+3}Y,\pi_iX)=0,\,\forall i\in\left\{1,...,6\right\}, \,\forall X,Y\in\Gamma(TM)
\end{align*}
where $R$ is the curvature tensor of the Lagrangian submanifold $\frak{L}(M^n)$.
\end{theoremb*}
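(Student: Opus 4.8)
The plan is to route both equivalences through a single pointwise condition on $\alpha$, namely that
\[
(\star)\qquad \alpha(\pi_jX,\pi_{j+3}Y,\pi_kZ)=0\quad\text{for all }X,Y,Z\in\Gamma(TM)\text{ and all }k\notin\{j,j+3\}.
\]
By Theorem\,A the second fundamental form $h$ of $\mathfrak{L}(M^n)$ is, up to the universal factor of two, the symmetric cubic form $\alpha$, so every statement about $h$ translates into one about $\alpha$ and conversely. I would first record the bridge between $\alpha$ and the Levi-Civita connection of $\hat g$: since the principal curvatures are constant, for $Y\in\Gamma(D_b)$ and $Z\in\Gamma(D_k)$ one has $(\nabla_XA)Y=(\lambda_b-A)\nabla_XY$, whence $\alpha(X,Y,Z)=(\lambda_b-\lambda_k)\langle\nabla_XY,Z\rangle$ whenever $b\neq k$. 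This identity is the engine for everything that follows.

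For the integrability statement I would compute, for $X\in\Gamma(D_a)$, $Y\in\Gamma(D_b)$ with $a,b\in\{j,j+3\}$ and $Z\in\Gamma(D_k)$, $k\notin\{j,j+3\}$,
\[
\langle[X,Y],Z\rangle=\alpha(X,Y,Z)\Big(\tfrac{1}{\lambda_b-\lambda_k}-\tfrac{1}{\lambda_a-\lambda_k}\Big).
\]
The symmetry of $\alpha$ (Codazzi) kills the right-hand side when $a=b$, so each $D_j$ is already integrable and only the mixed term $a=j$, $b=j+3$ survives; there the bracket factor equals $(\lambda_j-\lambda_{j+3})/\big((\lambda_{j+3}-\lambda_k)(\lambda_j-\lambda_k)\big)$, which is nonzero because the six principal curvatures are pairwise distinct. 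By Frobenius this shows at once that $D_j\oplus D_{j+3}$ is integrable if and only if $(\star)$ holds for that $j$.

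For the curvature statement I would feed $h=\tfrac12\alpha$ into the Gauss equation for $\mathfrak{L}(M^n)\subset\CP^{n+1}$. For $U=\pi_iX\in D_i$ and $V=\pi_{i+3}Y\in D_{i+3}$ the two vectors are orthogonal, and because $\mathfrak{L}(M^n)$ is Lagrangian every term of the Fubini--Study curvature tensor involving the complex structure $J$ evaluates to zero on tangent vectors; the ambient contribution is therefore the constant-holomorphic-curvature term $\tfrac{c}{4}\|U\|^2\|V\|^2>0$, so that
\[
R(U,V,V,U)=\tfrac{c}{4}\|U\|^2\|V\|^2+\langle h(U,U),h(V,V)\rangle-\|h(U,V)\|^2.
\]
The crucial point is then that the Weyl identities of Theorem\,\ref{weylt}, together with the Symmetry identities, force a universal cancellation of the ambient term against the $(D_i\oplus D_{i+3})$-part of $h$, leaving $R(U,V,V,U)=-\|\,\pi^{\perp}h(U,V)\,\|^2$, where $\pi^{\perp}$ projects onto $(D_i\oplus D_{i+3})^{\perp}$. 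Since this remainder is a sum of squares of the components $\alpha(U,V,\pi_kZ)$ with $k\notin\{i,i+3\}$, its vanishing is exactly $(\star)$. Establishing this cancellation as an identity valid for \emph{every} $g=6$ family—not merely a posteriori in the homogeneous case—is the main computational obstacle, and it is precisely where the unified tensorial form of the Weyl and Symmetry identities does the work.

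Combining the two reductions yields the equivalence between integrability and the curvature condition. To close the circle with homogeneity I would invoke the characterization of homogeneous isoparametric hypersurfaces in terms of the covariant derivative of the shape operator: homogeneity is equivalent to the vanishing of exactly the family of components of $\nabla A$ encoded in $(\star)$. The genuinely hard, conceptual step is this last one—passing from the pointwise, purely local data $(\star)$ to the existence of a transitive isometric action—so I expect the argument to lean here on the known structural description of homogeneity (via the leaves of the now-integrable distributions $D_j\oplus D_{j+3}$, which are totally geodesic and assemble into the orbit structure) rather than on the Lagrangian formalism itself.
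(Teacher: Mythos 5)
Your overall skeleton --- funneling both conditions through the single pointwise statement $(\star)$, which by Lemma\,\ref{van} is the same as condition (i) of the paper's Theorem\,\ref{theoremzus} --- matches the paper's strategy, and your integrability argument is correct: the bracket computation via $\alpha(X,Y,Z)=(\lambda_b^0-\lambda_k^0)\,g_0(\nabla^0_XY,Z)$ is an elementary alternative to the paper's route through $(\nabla_{\hat X}B_{\theta_j}^2)\hat Y-(\nabla_{\hat Y}B_{\theta_j}^2)\hat X$ (Lemmas \ref{dist} and \ref{ableitungb}). The genuine gaps are in the other two steps.

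First, the curvature step is set in the wrong ambient space, and the key cancellation is misattributed. Theorem\,A identifies $\tfrac12\alpha$ with the second fundamental form of $\mathfrak{L}(M)$ \emph{in the quadric} $Q^n$, not in $\CP^{n+1}$: inside $\CP^{n+1}$ the submanifold has codimension $n+2$ and its second fundamental form contains an extra component, namely the second fundamental form of $Q^n\subset\CP^{n+1}$ restricted to $\mathfrak{L}(M)$, which you omit when you set $h=\tfrac12\alpha$. This omission makes your claimed formula unreachable: by Lemma\,\ref{van} one has $h(U,U)=h(V,V)=0$ and, moreover, the $(D_i\oplus D_{i+3})$-part of $h(U,V)$ vanishes identically, so your Gauss equation collapses to $R(U,V,V,U)=\tfrac{c}{4}\|U\|^2\|V\|^2-\|h(U,V)\|^2$, and nothing in your setup can cancel the strictly positive ambient term --- the proposed ``cancellation against the $(D_i\oplus D_{i+3})$-part of $h$'' cannot occur because that part is zero. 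The correct mechanism (Proposition\,\ref{cod} together with Lemma\,\ref{curvten}) is the term $q\owedge\overline{q}$ in the curvature of the quadric (Lemma\,\ref{hrq}), which pulls back to $b\owedge\overline{b}$ and cancels $\hat g\owedge\hat g$ on $(\pi_iX,\pi_{i+3}Y,\pi_{i+3}Y,\pi_iX)$ by a one-line pointwise computation using $\theta_{i+3}=\theta_i+\tfrac{\pi}{2}$ (the eigenvalues of $B_0$ on $D_i$ and $D_{i+3}$ differ by a sign). No Weyl or Symmetry identities enter, and they could not do this job: the classical Weyl identity for the antipodal pair $(i,i+3)$ degenerates, since $1+\lambda_i^0\lambda_{i+3}^0=0$, and it holds for \emph{every} isoparametric family, homogeneous or not, so it forces no vanishing; the Symmetry identities are global statements about the reflections $\tau_j$ and are irrelevant to a pointwise Gauss-equation identity.

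Second, your closing step assumes what is to be proved. The ``characterization of homogeneity in terms of the covariant derivative of the shape operator'' that you invoke is precisely the equivalence of condition (i) with homogeneity, i.e.\ the content of the theorem. The citable result (\cite{dn}, \cite{mi2}) is that homogeneity is equivalent to the kernels of the linear isospectral families $L(s)$ being independent of $s$; converting that statement into $(\star)$ requires expressing $L_0$ and $L_1$ explicitly in terms of the components of $\alpha$ and reading off when the kernel is constant --- this is the paper's Proposition\,\ref{l1} --- and no substitute for this bridge appears in your proposal.
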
 

The previous theorem thus reduces the classification of isoparametric hypersurfaces with $g=6$ to a geometric problem for Lagrangian submanifolds of the complex quadric.
We hope that our approach might finally lead to a classification of isoparametric hypersurfaces with $g=6$.
Note that up till now, for the classification of isoparametric hypersurfaces with $g=6$, there exists only one approach suggested by Dorfmeister and Neher \cite{dn}.
However, for the remaining case $m=2$, the underlying algebraic problem turned out to be very hard, and no one was so far able to carry out their approach for $m=2$.
Therefore a new approach seems to be needed.

\smallskip

\thispagestyle{empty}

Using this structural approach we can reprove many of the classical results. 
Most proofs become simpler and render greater geometric insight.

\smallskip

This paper is organized as follows: in Section\,\ref{sec1} we recall a few preliminary definitions and give 
a survey of results needed later on.
In Section \,\ref{sec2} we carry out the translation from the isoparametric hypersurface family in the sphere to the Lagrangian submanifold of the real Grassmannian of oriented $2$-planes in $\R^{n+2}$.
In particular, we introduce the set $(\hat{g},\alpha,B\otimes B^{-1})$ of structural invariants.
Section\,\ref{sec3} deals with the fundamental submanifold equations of the Lagrangian submanifold.
Moreover, we derive the Weyl Identities and the Symmetry identities.
In Section\,\ref{sec4} we give several equivalent formulations of homogeneity.
Finally, we provide a counterexample to one crucial proposition in Miyaoka's approach \cite{mi2}, \cite{mierr} in the appendix.

\section*{Acknowledegements}
It is a pleasure to thank Prof. Dr. Uwe Abresch, for introducing me to isoparametric hypersurfaces
and for his generous support. I benefitted enormously from numerous mathematical discussions with him.
Furthermore, I am very grateful to Prof. Dr. Wolfgang Ziller and Prof. Dr. Gudlaugur Thorbergsson for their many valuable comments and for fruitful mathematical discussions.
Moreover, I would like to thank the referee for carefully reading the manuscript and for giving such constructive comments
which substantially helped to improve the quality of the manuscript.

\smallskip

This work was finished while the author was supported by Deutsche Forschungsgemeinschaft with the grant SI 2077/1-1.
Last, but not least, I would like to thank the Max Planck Institute for Mathematics in Bonn for providing excellent working conditions.

\section{Preliminaries}
\label{sec1}
In this section we gather the definitions and tools which we will need later on.
Throughout this paper, let $M$ be a connected, smooth manifold of dimension $n$.

\smallskip

\subsection*{Isoparametric hypersurface}
We start by recalling the definition of an isoparametric hypersurface in a sphere.

\begin{definition}
An embedding $F_{0}:{M}\hookrightarrow \Sph^{n+1}$ together 
with a distinguished unit normal vector field $\nu_0\in\Gamma(\nu M)$ is called an {\em isoparametric hypersurface in $\Sph^{n+1}$}
if and only if the principal curvature functions are constant.
\end{definition}

The data of an isoparametric hypersurface are $g$, the number of distinct principal curvatures, and  $m_j$ ($1\leq j\leq g$), the multiplicities of the curvature distributions.

\smallskip

Below we assume that $(F_0,\nu_0)$ constitutes an isoparametric hypersurface in $\Sph^{n+1}$.
Let $A_0$ be the shape operator of $F_0$ with respect to $\nu_0$.
We denote the $g$ different eigenvalues of $A_0$, i.e. the principal curvature functions,
by $\lambda_{j}(0)$. By assumption the $\lambda_j(0)$ are constant on $M$.
Without loss of generality, we assume $\lambda_1(0)>...>\lambda_g(0)$ and
define $\theta_j\in\left(-\frac{\pi}{2},\frac{\pi}{2}\right)$ such
that $\lambda_j(0)=\cot(\theta_j)$.

\smallskip

We denote the curvature distribution associated to $\lambda_j(0)$ by $D_j$, i.e. we have $D_j=\mbox{Eig}(A_0,\lambda_j(0))$.
Furthermore, we let $\pi_j:TM\rightarrow D_j$ be the orthogonal projection into $D_j$.
Note that we have $m_j=\dim D_j$.
The following lemma is due to M\"unzner \cite{munzner}.

\begin{lemma}[\cite{munzner}]
The curvature distribution $D_j$ is integrable and the leaves are small spheres in $\Sph^{n+1}$ with curvature $1+\cot^2(\theta_j)$.
These spheres are totally geodesic submanifolds of $M$.
\end{lemma}

\subsection*{Parallel surfaces}
Isoparametric hypersurfaces always come along as families of isoparametric hypersurfaces; namely \lq almost all\rq\ parallel surfaces of a given isoparametric hypersurface are also isoparametric hypersurfaces.

\smallskip

In what follows let  $F_{0}:{M}\hookrightarrow \Sph^{n+1}$ together with a distinguished normal vector field $\nu_0$ be a fixed isoparametric hypersurface. By slight abuse of notation
we also call the image $F_0(M)$ an isoparametric hypersurface.
We consider the \textit{parallel surface} $F_{t}: M\hookrightarrow\Sph^{n+1}$ with signed distance $t$ to $F_0$.
It is given by
\begin{align*}
p\mapsto F_t(p):=\exp_{F_0(p)}(t\nu_{0\vert p})=\cos(t)F_0(p)+\sin(t)\nu_{0\vert p},
\end{align*}
endowed with the orientation
\begin{align*}
\nu_{t}(p)=-\sin(t)F_{0}(p)+\cos(t){\nu_{0}}_{\vert_p}.
\end{align*} 

The map $F_t$ induces the following data on $M$: the Riemannian metric 
$g_t=F_t^{*}\left\langle \cdot,\cdot\right\rangle_{\Sph^{n+1}}$, the associated Levi-Civita connection $\nabla^t$ and
the shape operator $A_t$ of the submanifold $(M,g_{t})\subset(\Sph^{n+1},\left\langle \cdot,\cdot\right\rangle_{\Sph^{n+1}})$ 
with respect to $\nu_t$.

\smallskip

In the next lemma we express $A_t$ in terms of $A_0$ since this will be needed later on.         

\begin{lemma}
\label{shapeoperator}
In terms of $A_0$ the shape operator $A_t$ is given by
\begin{align*}
{A_{t}}=(\eins+\cot(t){A_{0}})(\cot(t)\eins-{A_{0}})^{-1},
\end{align*}
where in the cases $t=\theta_{j}+\ell\pi$, $\ell\in\Z_2$, the operator $A_{t}$ is defined on $TM\!\setminus\! D_j$. 
\end{lemma}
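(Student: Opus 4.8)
The plan is to read off $A_t$ from the Weingarten equation of the parallel surface $F_t$ inside $\Sph^{n+1}$, feeding in the formula \eqref{abf} for $dF_t$ together with the original Weingarten relation $d\nu_0=-dF_0 A_0$. First I would differentiate the normal field: from $\nu_t=-\sin(t)F_0+\cos(t)\nu_0$ and $d\nu_0=-dF_0 A_0$, viewing all maps as $\R^{n+2}$-valued, one obtains
$$d\nu_t(X)=-dF_0\bigl(\sin(t)\eins+\cos(t)A_0\bigr)X.$$

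The one genuinely geometric step is to identify this with $-dF_t(A_t X)$, the Weingarten equation for $F_t$. Since the shape operator only sees the part of $d\nu_t(X)$ tangent to $F_t(M)$ in the sphere, I would first check that $d\nu_t(X)$ has no component along the position vector $F_t(p)$: differentiating $\langle\nu_t,F_t\rangle=0$ and using that $\nu_t$ is normal to $M$, so that $\langle\nu_t,dF_t(X)\rangle=0$, forces $\langle d\nu_t(X),F_t\rangle=0$. Hence $d\nu_t(X)$ is already tangential and equals $-dF_t(A_t X)$. Substituting \eqref{abf} in the form $dF_t(A_t X)=dF_0(\cos(t)\eins-\sin(t)A_0)(A_t X)$ and cancelling the injective $dF_0$ yields the pointwise operator identity
$$\bigl(\cos(t)\eins-\sin(t)A_0\bigr)A_t=\sin(t)\eins+\cos(t)A_0.$$

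To reach the asserted form I would solve for $A_t$ and factor $\sin(t)$ out of each coefficient, which (for $\sin(t)\neq 0$) replaces the operators by $\cot(t)\eins-A_0$ and $\eins+\cot(t)A_0$, giving $A_t=(\cot(t)\eins-A_0)^{-1}(\eins+\cot(t)A_0)$. Because $A_0=\sum_j\lambda_j^0\pi_j$ is diagonalizable, every operator here is a function of $A_0$ and they all commute, so the two factors may be reordered into the stated $A_t=(\eins+\cot(t)A_0)(\cot(t)\eins-A_0)^{-1}$.

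The only point requiring care is the invertibility of $\cot(t)\eins-A_0$. This holds precisely when $\cot(t)\notin\spec{A_0}$, i.e. away from the focal values $t=\theta_j+\ell\pi$; at such a value the operator has kernel $D_j$, which is exactly why the statement restricts $A_t$ to $TM\setminus D_j$. I expect this bookkeeping, rather than any analytic difficulty, to be the main thing to handle, and it is resolved by running the identical cancellation on the complement of $D_j$, where $\cot(t)\eins-A_0$ is an isomorphism.
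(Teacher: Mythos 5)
Your proposal is correct and follows essentially the same route as the paper: compute $d\nu_t=-dF_0(\sin(t)\eins+\cos(t)A_0)$, invert the relation \eqref{abf} to express this as $-dF_t(\eins+\cot(t)A_0)(\cot(t)\eins-A_0)^{-1}$, and read off $A_t$ from the Weingarten equation. The paper leaves the tangentiality of $d\nu_t(X)$, the cancellation of the injective $dF_0$, and the commutation of functions of $A_0$ implicit, so your additional bookkeeping is just a more explicit rendering of the same argument.
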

\begin{proof}
Since $$d\nu_t=-dF_0(\sin(t)\eins+\cos(t)A_0),$$
equation (\ref{abf}) implies
\begin{align}
\label{abn}
d{\nu_{t}}=-{dF_{t}}(\eins+\cot(t){A_{0}})(\cot(t)\eins-{A_{0}})^{-1},
\end{align}
whence the claim.
\auxqed
\end{proof}

We will now make sense of the statement that  \lq almost all\rq\ parallel surfaces of a given isoparametric hypersurface are also isoparametric hypersurfaces.
Using the identity $d \nu_{0}=-dF_{0}A_{0}$ we get
\begin{align}
{dF_{t}}X={dF_{0}}(\cos(t)\eins- \sin(t){A_{0}})X
\label{abf}
\end{align}
and hence $\rk (dF_{t\vert p})=n$ if $t\neq\theta_j\,\,\mbox{mod}\,\pi$ and
$\rk (dF_{t\vert p})=n-m_j$ otherwise.

\smallskip

If $t\neq\theta_j\,\,\mbox{mod}\,\pi$, the parallel surface $F_t(M)$ is again an isoparametric hypersurface.
Lemma \ref{shapeoperator} implies that the principal curvatures of $F_t(M)$ are given by $\lambda_j(t)=\cot(\theta_j-t)$.

\smallskip

If $t=\theta_j\,\,\mbox{mod}\,\pi$, the $m_j$-dimensional eigenspace $D_j(p)$ is the kernel of $dF_{t\vert p}$ for every $p\in M$.
 In other words, the leaf $L_j(p)$ of $D_j$ through $p$, is focalized into the point $\overline{p}=F_{\theta_j}(p)$.
Hence,  $M_{j}:=F_{\theta_j}(M)$ is a so-called focal submanifold of dimension 
$(n-m_j)$.

\smallskip

Summarizing the previous considerations we get
$$F_0\,\,\mbox{isoparametric}\Rightarrow F_t \,\,\,\mbox{is}\,\,\left\{\begin{array}{ll} \mbox{isoparametric},& \mbox{if $\cot(t)\notin\mbox{spec}(A_0)$;} \\
         \mbox{submersion onto a focal manifold},&\mbox{otherwise.}
         \end{array}\right.$$

Finally, it is important  to remark that all the hypersurfaces in a family of parallel isoparametric hypersurfaces have the same focal submanifolds.
It is easily shown, see e.g. \cite{munzner}, that there are exactly two focal submanifolds.

\subsection*{Spectrum of the focal shape operators}
Using identity (\ref{abf}),
M\"unzner \cite{munzner} proved that for $t=\theta_j\,\,\mbox{mod}\,\pi$ the spectrum of $A_{t\vert \nu_p}$
is independent of $\nu\in\nu M_{j}$ and $p\in M_{j}$
and is given by  
\begin{align}
\label{specfoc}
\mbox{spec}(A_{t\vert\nu_p})=\left\{\cot\big((i-j)\pi/g \big)\, \vert\, i\in \left\{1,...,g\right\},\, i\neq j\right\}.
\end{align}
Thus for each $p\in M_{j}$ and each pair of orthonormal vectors $v_1,v_2\in\nu_p M_{j}$
the family
\begin{align*}
L(s)=\cos(s)A_{t\vert v_1}+\sin(s)A_{t\vert v_2},\,s\in\R,
\end{align*}
is isospectral. We will henceforth refer to $L(s)$ as the isospectral family at $p\in M_{j,\ell}$ with respect to $(v_1,v_2)\in\nu_p M_{j}$.

\smallskip

 The fact that the spectrum of the focal shape operator of the focal submanifold is independent of $\nu\in\nu M_{j}$ and $p\in M_{j}$ implies that the eigenvalues
$\lambda_k(0)$, $k\in\left\{1,...,g\right\}$,
are of the form $\lambda_k(0)=\cot(\phi+(k-1)\pi/g),$
with $0<\phi<\pi/g $. 
Thus $\theta_k=\phi+(k-1)\pi/g$ modulo $\pi$.

\smallskip

The parameter $\phi$ in the formula for $\theta_j$ encodes the position of $F_{0}$ in the isoparametric family.
We shall choose the initial hypersurface such that $\phi =\theta_1=\frac{\pi}{2g}$. Thus the initial isoparametric hypersurface is the one which lies in the middle of the focal submanifolds $F_{-\pi/2g}(M)$ and $F_{\pi/2g}(M)$. 

\smallskip

Using that the spectrum of $A_{t \nu_p}$
is independent of $\nu\in\nu M_{j}$, M\"unzner proved that
the multiplicities satisfy the equation 
\begin{align*}
m_{i}=m_{i+2}, i\in\Z_g.
\end{align*}
Therefore at most two distinct values for the multiplicities exist. They will henceforth be referred to as $m_1$ and $m_2$.
If all multiplicities coincide, what is for example the case if $g$ is odd,
their common value is denoted by $m$.

\subsection*{Global structure}
The global situation is as follows:
\begin{align*}
\frak{F}=\left\{F_t(M)\,\lvert\,t\in\left[-\pi/2g,\pi/2g\right]\right\}
\end{align*}
is a singular Riemannian foliation, $F_t(M)$ are isoparametric hypersurfaces for all $t\in\left(-\pi/2g,\pi/2g\right)$,
$F_{-\pi/2g}(M)$ and $F_{\pi/2g}(M)$ are submanifolds of codimension at least two in $\Sph^{n+1}$. 
Each normal geodesic $\gamma$ intersects the focal submanifolds at times
$t=(2j+1)\pi/2g$, $j\in\Z$, alternating between the two focal submanifolds $M_+:=F_{-\pi/2g}(M)$ and $M_-:=F_{\pi/2g}(M)$.
The regular set $\mathcal{R}$ is the set of times $t\in\R$ 
such that $\gamma(t)$ is not a focal point,
$$\mathcal{R}=\left\{t\in\R\,\lvert\,\not\exists\,j\in\Z\,\,\mbox{with}\,\,t=(2j+1)\pi/2g\right\}.$$ 
Any fixed isoparametric hypersurface $F_{t_0}(M)\in\mathfrak{F}$
coincides with either of the tubes $\mbox{Tube}_{d_+}(M_+)$
and $\mbox{Tube}_{d_-}(M_-)$ of radius $d_+$ and $d_-$, respectively, where $d_{\pm}$ denotes the distance of $M_{\pm}$ to $F_{t_0}(M)$.
Thus each normal geodesic intersects a given isoparametric hypersurface $F_{t_0}(M)$ exactly $2g$ times before it closes.
Furthermore, the focal set of each isoparametric hypersurface $F_{t_0}(M)$ is exactly the union of $M_+$ and $M_-$.
 
\smallskip

Figure\,\ref{fig1} sketches a normal geodesic in the case $g=3$.
 It intersects each isoparametric hypersurface exactly six times before it closes.
The intersection points with one fixed isoparametric hypersurface are marked by solid points.

\begin{figure}[h]
\begin{center}
\begin{tikzpicture}
\draw (0,0) circle [radius=1.5cm]; 
\draw[very thick](0:1.3cm)--(0:1.7cm);
\node at (0:2.1cm) {$M_+$};
\fill(10:1.5cm) circle[radius=0.05cm];
\fill(350:1.5cm) circle[radius=0.05cm];
\draw[very thick](60:1.3cm)--(60:1.7cm);
\node at (60:1.9cm) {$M_-$};
\draw[very thick](120:1.3cm)--(120:1.7cm);
\node at (120:2.0cm) {$M_+$};
\fill(130:1.5cm) circle[radius=0.05cm];
\fill(110:1.5cm) circle[radius=0.05cm];
\draw[very thick](180:1.3cm)--(180:1.7cm);
\node at (180:2.1cm) {$M_-$};
\draw[very thick](240:1.3cm)--(240:1.7cm);
\node at (240:2.0cm) {$M_+$};
\fill(230:1.5cm) circle[radius=0.05cm];
\fill(250:1.5cm) circle[radius=0.05cm];
\draw[very thick](300:1.3cm)--(300:1.7cm);
\node at (300:2.0cm) {$M_-$};
\end{tikzpicture}
\caption{Global picture for $g=3$}
\label{fig1}
\end{center}
\end{figure}

\subsection*{Topology}
\label{topo}
Each isoparametric hypersurface $F_{t_0}(M)$ separates the sphere $\Sph^{n+1}$ into two connected components $B_+$ and $B_-$, i.e. $F_{t_0}(M)=B_+\cap B_-$ and $\Sph^{n+1}=B_+\cup B_-$, such that these components are disk bundles over the focal manifolds. Assume without loss of generality that $M_+$ has codimension $m_1+1$ and that $M_-$ has codimension $m_2+1$. 
Thus we have the disk bundles 
\begin{align*}
D_{\pm}\rightarrow B_{\pm}\rightarrow M_{\pm},
\end{align*}
where the fibers $D_+$  and $D_-$ have dimensions $m_1+1$ and $m_2+1$, respectively.

\smallskip

This topological fact was used in the papers \cite{munzner2}, \cite{abresch} and \cite{stolz} to classify the number
of distinct principal curvatures and their possible multiplicities.
 
\subsection*{Classification results}
In Table\,1 the known classification results for isoparametric hypersurfaces in spheres
 with $g$ different principal curvatures are summarized.
 Recall that if the multiplicities $m_1$ and $m_2$ coincide their common value is denoted by $m$. 
 
\renewcommand{\thefootnote}{\alph{footnote}}
\begin{table}[h]
\begin{center}
\caption{Classification results of isoparametric hypersurfaces in spheres}
\label{tabelle1}
\begin{tabular}{l|l}
\toprule
$g$ &ISOPARAMETRIC HYPERSURFACES IN $\Sph^{n+1}$\\  \midrule
1& Open subset of a great or small hypersphere in $\Sph^{n+1}$.\\[0.3em]
2& Standard product of two spheres $\Sph^{d_1}(r_1)\times\Sph^{d_2}(r_2)\subset\Sph^{n+1}$ \\
&with $r_1^2+r_2^2=1$ and $n=d_1+d_2$.\\ [0.3em]
3& $m\in\left\{1,2,4,8\right\}$; tube of constant radius of a standard Veronese embedding\\
& of a projective plane $\FP^2$ into $\Sph^{3m+1}$, where $\F$ equals the\\
& division algebra $\R$, $\C$, $\H$ or $\Ca$,
for $m=1,2,4,8$, respectively.\\ [0.3em]
4& Multiplicities coincide with those of the examples of FKM-type\\
& or the two homogeneous exceptions $(m_1,m_2)=(2,2)$ and $(4,5)$.\\
&If $(m_1,m_2)=(2,2)$ or $(4,5)$ they are homogeneous.\\  
&If $m_2\geq 2m_1-1$ or if $(m_1,m_2)=(3,4)$, $(6,9)$ or $(7,8)$ they are of FKM-type.\\[0.3em]
6& $m\in\left\{1,2\right\}$.  If $m=1$, the hypersurface is homogeneous.\\
& For the remaining case $m=2$ the classification is still open.\\ \bottomrule
\end{tabular}\\
\end{center}
\end{table}

\smallskip

The classification of isoparametric hypersurfaces with $g\leq 3$ is due to Cartan \cite{cartan1}-\cite{cartan4}.

\smallskip

While all isoparametric hypersurfaces with $g\leq 3$ are homogeneous, the situation in the case $g =4$ is more complex since there exist infinitely many isoparametric hypersurfaces and infinitely many of them are inhomogeneous.\\
Ferus, Karcher and M\"unzner \cite{fkm} used representations of Clifford algebras to produce a class of isoparametric families with four principal curvatures, the so-called isoparametric hypersurfaces of FKM-type.
Stolz \cite{stolz} proved that the multiplicities of an isoparametric hypersurface with four principal curvatures must coincide with those in the known examples of FKM-type or two homogeneous exceptions, namely $(m_1,m_2)=(2,2)$ or $(4,5)$. 
Cecil, Chi and Jensen \cite{ccj} proved that if the multiplicities $(m_1,m_2)$ satisfy $m_2\geq 2m_1-1$, then the isoparametric hypersurfaces is necessarily of FKM-type.
Thus the cases $(m_1,m_2)=(4,5), (3,4), (6,9)$ and $(7,8)$ were left, which were successively classified by Chi, see \cite{chi} and the references therein.
For a detailed exposition to the the cases $g=4$ we refer the reader to the surveys by Cecil \cite{cecil} and Thorbergsson \cite{tb2}.

\smallskip

We postpone a detailed exposition of the case $g=6$ to Section \ref{sec4}, since in order to do so, we make use of results presented in Section \ref{sec2} and \ref{sec3}.

\section{The Lagrangian submanifold model and its structural invariants}
\label{sec2}
As already discussed in Section\,1, isoparametric hypersurfaces always come along as families of isoparametric hypersurfaces.
To each family of isoparametric hypersurfaces, we associate a Lagrangian submanifold of the complex quadric.
Thus, instead of working with a family of hypersurfaces, we can henceforth work with only \textit{one} submanifold. 
For this submanifold we introduce a set of invariants $(\hat{g},\alpha,B\otimes B^{-1})$ and endow them with a geometric meaning.

\smallskip

This section is structured as follows: in the first subsection we recall the definition and basic properties of the complex quadric.
The construction of the Lagrangian submanifold of the complex quadric is explained in the second subsection.
Finally, we introduce the set of invariants in the third subsection.
Throughout this section let $X,Y,Z,W\in\Gamma(TM)$.

\subsection{The complex quadric}
In this subsection we give a very brief introduction to the complex quadric.
We just cover those definitions and facts needed later on.
For a detailed exposition we refer the reader to the book \cite{gg} of Gasqui and Goldschmidt which we use as reference.

\medskip

We write $\left\langle \cdot,\cdot\right\rangle_{\C^{n+2}}$  and $\left\langle \cdot,\cdot\right\rangle_{h}$ for the standard complex bilinear and
 the standard hermitian inner product of $\C^{n+2}$, respectively. 
 
 \begin{definition}
The complex quadric is the complex hypersurface of $\CP^{n+1}$ given by
\begin{align*}
Q_n=\left\{\left[z\right]\in\CP^{n+1}\,\lvert\,z_0^2+...+z_{n+1}^2=0\right\},
\end{align*}
where $z=(z_0,...,z_{n+1})$ denote the standard coordinates of $\C^{n+2}$. 
\end{definition}

Clearly, the complex quadric $Q_n$ may also be described by
\begin{align*}
Q_n=\left\{\pi(z)\vert \, z\in\Sph^{2n+3}, \left\langle z,z\right\rangle_{\C^{n+2}}=0\right\},
\end{align*}
 where $\pi:\C^{n+2}\setminus\left\{0\right\}\rightarrow \CP^{n+1}$ is the natural projection, where $\Sph^{2n+3}\subset\C^{n+2}$ is endowed with the Riemannian metric induced by $\langle\cdot,\cdot\rangle_{\R}=\mbox{Re}\langle\cdot,\cdot\rangle$ on $\C^{n+2}$.
 
 \smallskip

Another well-known fact is that the complex quadric $Q_n$ is diffeomorphic to the real Grassmannian $Gr^{+}_2(\R^{n+2})$ of oriented $2$-planes in $\R^{n+2}$.
From now on we shall use this identification whenever convenient. 
 
 \medskip

 The remaining part of this subsection aims at describing the curvature tensor of $Q$. 
 In order to do so, we first have to introduce some notation.
 
 \smallskip
 
  Let $g_Q$ denote the K\"ahler metric on $Q_n$ induced from the Fubini-Study metric $g_{FS}$ on $\CP^{n+1}$ (of constant holomorphic curvature $4$) by the inclusion map $\iota:Q_n\rightarrow \CP^{n+1}$, i.e.,
$g_Q=\iota^{*}g_{FS}$.
The associated Levi-Civita connection of $Q_n$ is denoted by $\nabla^Q$.
For both, $\CP^{n+1}$ and $Q_n$, the complex structure  
shall be called $J$ and the associated K\"ahler form $\omega$.

\smallskip

It  is well-known  that the projection $\pi:\Sph^{2n+3}\rightarrow\CP^{n+1}$ is a Riemannian submersion and that
the map $d\pi:H_z\rightarrow T_{\pi(z)}\CP^{n+1}$
is an isometry, where
\begin{align*}
H_z=\left\{u\in\C^{n+2}\vert\, \left\langle z,u\right\rangle_{\C^{n+2}}=0\right\}.
\end{align*}
For a point $z\in\Sph^{2n+3}$ which satisfies $\left\langle z,z\right\rangle_{\C^{n+2}}=0$ 
we get an isometry
$d\pi:H_z^{'}\rightarrow T_{\pi(z)}Q$, where $H_z^{'}$ is the subspace of $H_z$ defined by
\begin{align}
\label{hz}
H_z^{'}=\left\{u\in\C^{n+2}\vert\, \left\langle z,u\right\rangle_{\C^{n+2}}=0,\, \left\langle z,u\right\rangle_h=0\right\}.
\end{align}
The preceding considerations allow us to identify the tangent space $T_qQ^n$ of the complex quadric at a given point $q\in Q^n$ with $\C^n$.
Then the complex structure on $T_qQ^n$ is given by multiplication by $i$ on $\C^n$, and the K\"ahler metric $g_Q$ corresponds to the real part of the standard Hermitian inner product on $\C^n$. 
Below we shall use this identification whenever convenient. 

\smallskip

Gathering the preceding information and carrying out a straightforward calculation, one obtains the following lemma.

\begin{lemma}[\cite{gg}]
\label{hrq}
The Riemann curvature tensor of the quadric $Q^n$ is given by
\begin{align*}
R^{Q^n}=g_Q\owedge g_Q+\omega\owedge\omega+q\owedge\overline{q}
\end{align*}
where $q(\,\cdot\,,\,\cdot\,)=\langle \,\cdot\,,\,\cdot\,\rangle_{\C^{n+2}}$ and $\overline{q}(\,\cdot\,,\,\cdot\,)=\overline{\langle \,\cdot\,,\,\cdot\,\rangle}_{\C^{n+2}}$.
\end{lemma}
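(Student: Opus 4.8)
The plan is to obtain the formula from the Gauss equation for the Kähler hypersurface $Q_n\subset\CP^{n+1}$: the first two summands should be the tangential part of the ambient Fubini--Study curvature, and the third should encode the second fundamental form of the quadric. The identity is classical and is what is quoted from \cite{gg}; I sketch the route I would take to establish it directly.

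First I would record the ambient curvature. The Fubini--Study metric has constant holomorphic sectional curvature, and in the Kulkarni--Nomizu notation of this paper its curvature tensor is $R^{\CP^{n+1}}=g_{FS}\owedge g_{FS}+\omega\owedge\omega$, which is merely a rewriting of the constant-holomorphic-curvature tensor in the normalization where the holomorphic sectional curvature equals $4$. Since $Q_n$ is a complex submanifold, $J$ preserves $TQ_n$, the metric $g_{FS}$ restricts to $g_Q$, and $\omega$ restricts to the Kähler form of $Q_n$; hence the tangential part of $R^{\CP^{n+1}}$ is exactly $g_Q\owedge g_Q+\omega\owedge\omega$, which supplies the first two terms.

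Next I would compute the second fundamental form. At $[z]\in Q_n$ with unit representative $z$ (so $\langle z,z\rangle_{\C^{n+2}}=0$), I would verify that $\bar z$ lies in $H_z$ and is $g_{FS}$-orthogonal to $T_{[z]}Q_n$, so that $\nu=\bar z$ is a unit normal and $J\nu$ spans the remaining complex-line normal direction. The crucial observation is that the shape operator $A:=A_\nu$ is the \emph{real structure} given by complex conjugation $u\mapsto\bar u$ on $H_z$: one checks that conjugation preserves $H_z$, that $A$ is $g_Q$-symmetric with $A^2=\id$ and $AJ=-JA$, and that $q(u,v)=\langle u,\bar v\rangle_h=\langle u,Av\rangle_h$ on $H_z$, whence $\operatorname{Re} q(u,v)=g_Q(Au,v)$ and $\Im q(u,v)=\pm g_Q(JAu,v)$. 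Because $Q_n$ is a Kähler hypersurface one also has $A_{J\nu}=JA$, so the full second fundamental form is $\mathrm{II}(X,Y)=g_Q(AX,Y)\,\nu+g_Q(JAX,Y)\,J\nu$.

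Feeding this into the Gauss equation
\[
R^{Q_n}(X,Y,Z,W)=R^{\CP^{n+1}}(X,Y,Z,W)+\langle\mathrm{II}(X,W),\mathrm{II}(Y,Z)\rangle-\langle\mathrm{II}(X,Z),\mathrm{II}(Y,W)\rangle,
\]
and using the identity $\operatorname{Re}(a)\operatorname{Re}(b)+\Im(a)\Im(b)=\operatorname{Re}(a\overline{b})$, the normal-bundle pairing collapses to $\langle\mathrm{II}(X,W),\mathrm{II}(Y,Z)\rangle=\operatorname{Re}\!\big(q(X,W)\overline{q}(Y,Z)\big)=\tfrac12\big(q(X,W)\overline{q}(Y,Z)+\overline{q}(X,W)q(Y,Z)\big)$, after which the antisymmetrization built into the Gauss equation is exactly $q\owedge\overline{q}$. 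The main obstacle I anticipate lies in this final stage: pinning down the unit normal $\bar z$ and proving that the quadric's shape operator is precisely the conjugation real structure, and then tracking the several sign and normalization conventions (the Gauss equation, the choice of $\omega$ relative to $J$, and the two Kulkarni--Nomizu products) so that the shape-operator terms assemble into $q\owedge\overline{q}$ with coefficient exactly $1$ rather than some multiple of it.
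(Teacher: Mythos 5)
The paper offers no proof of this lemma at all: it is quoted verbatim from Gasqui--Goldschmidt \cite{gg}, so there is no internal argument to compare yours against. Your Gauss-equation route is the standard derivation, and its outline is correct, including the coefficient bookkeeping you were worried about: in the paper's conventions the Fubini--Study tensor normalized to holomorphic sectional curvature $4$ (the normalization forced by the submersion from the unit sphere) is exactly $g_{FS}\owedge g_{FS}+\omega\owedge\omega$ under the two Kulkarni--Nomizu products, and it restricts tangentially to $g_Q\owedge g_Q+\omega\owedge\omega$ because $Q_n$ is a complex submanifold. Likewise, with $\mathrm{II}(X,Y)=g_Q(AX,Y)\,\nu+g_Q(JAX,Y)\,J\nu$ and $A$ the conjugation real structure, the Gauss correction is $\operatorname{Re}\bigl(q(X,W)\overline{q}(Y,Z)\bigr)-\operatorname{Re}\bigl(q(X,Z)\overline{q}(Y,W)\bigr)$, which is precisely the paper's $q\owedge\overline{q}$ with coefficient $1$: the factor $\tfrac12$ built into product (i) matches the identity $\operatorname{Re}(a\overline{b})=\tfrac12(a\overline{b}+\overline{a}b)$. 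The sign ambiguity in the choice of unit normal is immaterial, since the correction term is quadratic in $\mathrm{II}$.

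Two points should be repaired or completed. First, a notational error: with the paper's definition (\ref{hz}), $\bar z$ does \emph{not} lie in $H_z$, since $\langle z,\bar z\rangle_{\C^{n+2}}=\langle z,z\rangle_h=1\neq 0$; the space $H_z$ is the horizontal lift of $T_{[z]}Q_n$ itself. What you actually need --- and what is true --- is that $\bar z$ is horizontal for the Hopf fibration, i.e.\ $\langle z,\bar z\rangle_h=\langle z,z\rangle_{\C^{n+2}}=0$ (this holds exactly because $z$ lies over the quadric), and that $\bar z\perp H_z$, since $\langle \bar z,u\rangle_h=\overline{\langle z,u\rangle}_{\C^{n+2}}=0$ for $u\in H_z$; hence $d\pi(\bar z)$ and $J\,d\pi(\bar z)$ span the normal plane of $Q_n$ in $\CP^{n+1}$. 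Second, the crux you flag --- that the shape operator in direction $d\pi(\bar z)$ is conjugation on $H_z$ --- is classical and can be settled quickly in the Stiefel picture: $St_2(\R^{n+2})\subset\Sph^{2n+3}$ is the level set of $z\mapsto\langle z,z\rangle_{\C^{n+2}}$, whose two defining real functions have gradients $2\bar z$, $\pm 2i\bar z$ and real Hessians $2\operatorname{Re}q$, $\pm2\operatorname{Im}q$, so that $g(A_{\bar z}u,v)=\pm\operatorname{Re}q(u,v)=\pm\operatorname{Re}\langle u,\bar v\rangle_h$ on horizontal vectors, i.e.\ $A_{\bar z}u=\pm\bar u$ there; this descends to $Q_n\subset\CP^{n+1}$ because both fibrations share the same vertical distribution and the normal spaces are horizontal. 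With these repairs your argument is complete.
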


For the convenience of the reader we recall the definition of the Kulkarni-Nomizu product. 

\begin{definition}
The Kulkarni-Nomizu product 
\begin{enumerate}
\renewcommand{\labelenumi}{(\roman{enumi})}
\item of two symmetric $(2,0)$-tensors $h_1$ and $h_2$ is the $(4,0)$-tensor $h_1\owedge h_2$ given by
\begin{align*}
h_1\owedge h_2(X,Y,Z,W)=\frac{1}{2}\big(&h_1(X,W)h_2(Y,Z)+h_2(X,W)h_1(Y,Z)\\-&h_1(X,Z)h_2(Y,W)-h_2(X,Z)h_1(Y,W)\big).
\end{align*}
\item  of the skew-symmetric form $\omega$ with itself is given by
\begin{align*}
(\omega \owedge\omega)(X,Y,Z,W)=\omega(X,W)&\omega(Y,Z)-\omega(X,Z)\omega(Y,W)\\&-2\omega(X,Y)\omega(Z,W).
\end{align*}
\end{enumerate}
\end{definition}

\subsection{From families of isoparametric hypersurfaces to a Lagrangian submanifold of the complex quadric}
In \cite{palm1, palm2} Palmer showed that every oriented, immersed hypersurface in the sphere naturally leads to a Lagrangian submanifold of the complex quadric.
We will apply this construction to isoparametric hypersurfaces in spheres.
In particular, we prove that every isoparametric hypersurface of a given family of isoparametric hypersurfaces leads to the same Lagrangian submanifold of the sphere.

\smallskip

We start by recalling the definition of the Stiefel manifold.
\begin{definition}
The Stiefel manifold is given by
\begin{align*}
St_2(\mathbb{R}^{n+2})=\left\{(v,w)\in\Sph^{n+1}\times\Sph^{n+1}\vert\, \left\langle v,w\right\rangle_{\R^{n+2}}=0\right\},
\end{align*}
where $ \left\langle\cdot,\cdot\right\rangle_{\R^{n+2}}$ denotes the standard metric of $\R^{n+2}$.
\end{definition}

Clearly, the Stiefel manifold can be identified with
\begin{align*}
\left\{z\in \Sph^{2n+3}\vert\, \left\langle z,z\right\rangle_{\C^{n+2}}=0\right\}\subset\Sph^{2n+3}
\end{align*}
by the map $(v,w)\mapsto \tfrac{1}{\sqrt{2}} (v+iw)\in \mathbb{S}^{2n+3}$.
Consequently, $\pi(St_2(\R^{n+2}))=Q_n$ and it follows easily that the projection $\pi:\,St_2(\R^{n+2})\rightarrow Q_n\subset\CP^{n+1}$ is a Riemannian submersion.

\smallskip

In order to associate a Lagrangian submanifold to an isoparametric hypersurface $(F_t,\nu_t)$, we first lift the embeddings $F_t$ to the Stiefel manifold $St_2(\mathbb{R}^{n+2})$ and then concatenates this map with the 
projection onto the Grassmannian $\mbox{Gr}_2^+(\R^{n+2})$.
For $t\neq\theta_j$ we define the map $\hat{F}_{t}$ by
\begin{align*}
\hat{F}_{t}:\,M\rightarrow St_2(\mathbb{R}^{n+2}),\hspace{0.5cm}  p\mapsto\hat{F}_{t}(p):=\tfrac{1}{\sqrt{2}}(F_{t}(p)+i{\nu_{t}}_{\vert_p}).
\end{align*}
Furthermore, we introduce the map $\frak{L}$ by
$$\frak{L}:=\pi\circ\hat{F}_{t}:M\rightarrow Q_n,\hspace{0.5cm} p\mapsto \lbrack\hat{F}_0(p)\rbrack.$$ 

\begin{figure}[h]
\begin{center}
\begin{tikzpicture}
\draw[->,thick](0:0)--(0:3cm);
\node at (0:-0.5cm) {$M^n$};
\node at (1.5cm,0.3cm) {$\hat{F_t}$};
\node at (0:4.1cm) {$St_2(\R^{n+2})$};
\draw[->,thick](4.1cm,-0.3cm)--(4.1cm,-1.75cm);
\node at (4.4cm,-1.05cm) {$\pi$};
\draw[->,thick](-0.3cm,-0.3cm)--(3.8cm,-1.9cm);
\node at (5cm,-2.0cm) {$Q^n\subset\CP^{n+1}$};
\node at (1.8cm,-1.4cm) {$\frak{L}$};
\end{tikzpicture}
\label{fig2}
\caption{The Lagrangian immersion $\frak{L}$}
\end{center}
\end{figure}

Note that $\frak{L}(p)$ is by construction the oriented $2$-plane in $\R^{n+2}$ which is spanned by ${F}_{t}(p)$ and $\nu_{t\lvert p}.$
Since we have $\hat{F_t}=e^{-it}\hat{F}_0$, the immersion $\frak{L}$ does not depend on the parameter $t$.
Thus we obtain the following lemma.

\begin{lemma}
Let a family of isoparametric hypersurfaces in a sphere be given. For each isoparametric hypersurface in this family we obtain the same
immersion $\frak{L}$.
\end{lemma}

As already mentioned above, the next result was first proved by Palmer \cite{palm1, palm2}.
For convenience of the reader we reprove this result.

\begin{proposition}[\cite{palm1, palm2}]
\label{palmer}
The map $\frak{L}:=\pi\circ\hat{F}_{t}:M\rightarrow Q_n$, $p\mapsto \lbrack\hat{F}_0(p)\rbrack$, is a Lagrangian immersion
with normal vector field $N_Z=\pm i\,d\hat{F}_0Z$, where $Z\in TM.$
\end{proposition}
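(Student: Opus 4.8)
The plan is to verify in turn that $\frak{L}$ is an immersion, that $d\frak{L}(T_pM)$ is isotropic of dimension $n=\dim_\C Q_n$ (hence Lagrangian), and that $\pm i\,d\hat{F}_0 Z$ is a horizontal lift of a normal field. First I would differentiate $\hat{F}_0=\tfrac{1}{\sqrt2}(F_0+i\nu_0)$ and insert $d\nu_0=-dF_0A_0$ to obtain
\[
d\hat{F}_0 Z=\tfrac{1}{\sqrt2}\,dF_0(\eins-iA_0)Z.
\]
Since $dF_0$ is injective and $(\eins-iA_0)Z=0$ forces $Z=0$ by separating real and imaginary parts, $d\hat{F}_0$ is injective.

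The crucial step is to check that $d\hat{F}_0 Z$ is horizontal, i.e.\ lies in the space $H_{\hat{F}_0(p)}$ of (\ref{hz}). Tangency of $\hat{F}_0$ to the cone $\{\langle z,z\rangle_{\C^{n+2}}=0\}$ yields $\langle\hat{F}_0,d\hat{F}_0 Z\rangle_{\C^{n+2}}=0$ at once, while tangency to $\Sph^{2n+3}$ gives $\mathrm{Re}\,\langle\hat{F}_0,d\hat{F}_0 Z\rangle_h=0$. It thus remains to expand $\langle\hat{F}_0,d\hat{F}_0 Z\rangle_h$ and read off that its imaginary part vanishes as well; writing everything in terms of the real vectors $F_0,\nu_0,dF_0 Z,dF_0 A_0 Z$, the four resulting real inner products each vanish because on $\Sph^{n+1}$ the position vector $F_0$ and the unit normal $\nu_0$ are orthogonal to every tangent vector of $M$. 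Hence $d\hat{F}_0 Z\in H_{\hat{F}_0(p)}$, so $d\frak{L}Z=d\pi(d\hat{F}_0 Z)$, and since $d\pi$ restricts to an isometry of $H_{\hat{F}_0(p)}$ onto $T_{\frak{L}(p)}Q_n$, the map $\frak{L}$ is an immersion.

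For the Lagrangian property I would use that $d\pi$ intertwines multiplication by $i$ on the complex subspace $H_{\hat{F}_0(p)}$ with the complex structure $J$, so that $\omega(d\frak{L}X,d\frak{L}Y)=g_Q(J\,d\frak{L}X,d\frak{L}Y)=\mathrm{Re}\,\langle i\,d\hat{F}_0 X,d\hat{F}_0 Y\rangle_h$. Expanding exactly as above, the surviving real part is $g_0(A_0 X,Y)-g_0(X,A_0 Y)$, which vanishes because the shape operator $A_0$ is self-adjoint; together with the dimension count this shows $d\frak{L}(T_pM)$ is Lagrangian. Finally, since $H_{\hat{F}_0(p)}$ is invariant under multiplication by $i$, the vector $i\,d\hat{F}_0 Z$ is again horizontal and projects to $\pm J\,d\frak{L}Z$; being of the form $J(\text{tangent})$ on a Lagrangian submanifold, this is normal, which identifies $N_Z=\pm i\,d\hat{F}_0 Z$, the sign reflecting the orientation convention relating multiplication by $i$ on $H_{\hat{F}_0(p)}$ to $J$ on $Q_n$.

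The main obstacle is the bookkeeping in the horizontality and isotropy computations: one must fix which slot of $\langle\cdot,\cdot\rangle_h$ is conjugate-linear and carefully separate real and imaginary parts, so that all terms collapse to the sphere orthogonality relations $\langle F_0,dF_0\,\cdot\,\rangle=\langle\nu_0,dF_0\,\cdot\,\rangle=0$ and to the self-adjointness of $A_0$. No deeper input is required, and since $\hat{F}_t=e^{-it}\hat{F}_0$ makes $\frak{L}$ independent of $t$, it suffices to argue at $t=0$.
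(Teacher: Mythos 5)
Your proposal is correct and follows essentially the same route as the paper: differentiate $\hat{F}_0=\tfrac{1}{\sqrt2}(F_0+i\nu_0)$ to get $d\hat{F}_0=\tfrac{1}{\sqrt2}\,dF_0(\eins-iA_0)$, then compute $\mathrm{Re}\langle i\,d\hat{F}_0X,d\hat{F}_0Y\rangle_h=\tfrac12\bigl(g_0(A_0X,Y)-g_0(X,A_0Y)\bigr)=0$ by self-adjointness of $A_0$. You are in fact more complete than the paper's own proof, which only records this isotropy computation and defers horizontality to a separate lemma (Lemma~\ref{lhz}) while leaving the immersion and normal-field claims implicit; your treatment of those points is sound.
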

\begin{proof}
Using the identity $d\nu_{t}=-dF_{t}A_{t}$, we obtain
\begin{align*}
d\hat{F}_{t}=\tfrac{1}{\sqrt{2}}\,dF_{t}(\eins-iA_{t})
\end{align*}
for all $t\in\R$. Thus we get
\begin{align*}
\hat{F_t}^{*}\omega(X,Y)&=g_Q(J d\hat{F_t}X,d\hat{F_t}Y)=\mbox{Re}{\left\langle id\hat{F}_{0}X,d\hat{F}_{0}Y\right\rangle}_h\\&=\tfrac{1}{2}
\big(g_0(A_{0}X,Y)+g_0(X,-A_{0}Y)\big)=0,	
\end{align*}
which proves our claim.
\auxqed
\end{proof}

\begin{lemma}
\label{lhz}
$\hat{F}_{t}(M)$ is horizontal with respect to the projection $\pi:St_2(\R^{n+2})\rightarrow Q_n.$
\end{lemma}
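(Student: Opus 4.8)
The plan is to show directly that every tangent vector of $\hat{F}_t(M)$ lies in the horizontal space $H_z$ of (\ref{hz}), where $z:=\hat{F}_t(p)$ and $p\in M$. Thus, for $X\in T_pM$ I set $u:=d\hat{F}_tX$ and verify the two defining conditions of $H_z$, namely $\langle z,u\rangle_{\C^{n+2}}=0$ and $\langle z,u\rangle_h=0$.

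The first condition is essentially free. In its second description the Stiefel manifold $St_2(\R^{n+2})$ is cut out inside $\Sph^{2n+3}$ by the equation $\langle z,z\rangle_{\C^{n+2}}=0$; since $\hat{F}_t$ maps into $St_2(\R^{n+2})$, differentiating this constraint along $\hat{F}_t$ gives $2\langle z,u\rangle_{\C^{n+2}}=0$. Hence $\langle z,u\rangle_{\C^{n+2}}=0$ holds for every tangent vector of $\hat{F}_t(M)$ without any computation.

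For the second condition I would start from the formula $d\hat{F}_t=\frac{1}{\sqrt{2}}\,dF_t(\eins-iA_t)$ obtained in the proof of the preceding proposition, so that $z=\frac{1}{\sqrt{2}}(F_t+i\nu_t)$ and $u=\frac{1}{\sqrt{2}}(dF_tX+i\,d\nu_tX)$. The input is the elementary orthogonality coming from the geometry in $\Sph^{n+1}$: the vectors $F_t$ and $\nu_t$ are orthonormal in $\R^{n+2}$, the vector $dF_tX$ is tangent to the hypersurface, and $d\nu_tX=-dF_tA_tX$ is again of the form $dF_t(\cdot)$; consequently each of the four real inner products $\langle F_t,dF_tX\rangle$, $\langle \nu_t,d\nu_tX\rangle$, $\langle F_t,d\nu_tX\rangle$, $\langle \nu_t,dF_tX\rangle$ vanishes. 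Splitting the Hermitian product into real and imaginary parts then forces $\langle z,u\rangle_h=0$, which is the remaining condition.

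The main obstacle here is not conceptual but a matter of bookkeeping: one must fix the conventions for $\langle\cdot,\cdot\rangle_h$ and $\langle\cdot,\cdot\rangle_{\C^{n+2}}$ consistently, and observe that $\mbox{Re}\langle z,u\rangle_h$ is the Euclidean inner product on $\R^{2n+4}\cong\C^{n+2}$ (so that $u$ being tangent to $\Sph^{2n+3}$ already gives $\mbox{Re}\langle z,u\rangle_h=0$), while $\mbox{Im}\langle z,u\rangle_h=0$ is precisely horizontality for the Hopf fibration, i.e. orthogonality of $u$ to the vertical direction $iz$. Once these conventions are pinned down, the lemma follows from the four orthogonality relations above.
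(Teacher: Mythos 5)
Your proof is correct and takes essentially the same approach as the paper's: both reduce horizontality to the elementary orthogonality relations among $F_t$, $\nu_t$, $dF_tX$, $d\nu_tX$ in $\R^{n+2}$, and then conclude via the description (\ref{hz}) of the horizontal space, which is exactly the hermitian orthogonal complement of $\C\hat{F}_t(p)\oplus\C\overline{\hat{F}_t(p)}$. The only cosmetic differences are that you work directly at time $t$ and obtain the bilinear condition $\langle z,u\rangle_{\C^{n+2}}=0$ by differentiating the quadric constraint cutting out $St_2(\R^{n+2})$, whereas the paper states the perpendicularity at $t=0$ and transports it to arbitrary $t$ via identities (\ref{abf}) and (\ref{abn}).
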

\begin{proof}
For any $p\in M$
\begin{align*}
\mbox{Im}(dF_{0\vert p})\perp\Span\left\{F_{0}(p),\nu_{0\vert p}\right\}
\end{align*}
in $\R^{n+1}$. Thus identities (\ref{abf}) and (\ref{abn}) imply
 \begin{align*}
\mbox{Im}(d\hat{F}_{t\vert p})\perp \C\hat{F}_{t}(p)\oplus\C\overline{\hat{F}_{t}(p)}
\end{align*}
in $\C^{n+1}$. The claim now follows from (\ref{hz}).
\end{proof}

\begin{remark}
The above construction was used by H. Ma and Y. Ohnita in \cite{ohnita} to classify compact homogeneous Lagrangian submanifolds in complex hyperquadrics.
\end{remark}

\subsection{A set of invariants of the Lagrangian submanifold}
In this subsection we introduce a set of invariants for the Lagrangian submanifold of the complex quadric.
Furthermore, for each of these invariants, we establish some basic properties.

\subsubsection{The metric $\hat{g}$}
We endow the Lagrangian submanifold with the natural metric, i.e., the Riemannian metric $\hat{g}$ on $M$ induced from $\left\langle \cdot,\cdot\right\rangle_h$ by $\hat{F}_{t}$.
This metric is given by
\begin{align*}
\hat{g}=\mbox{Re} (\hat{F}_{t}^{*}\left\langle \cdot,\cdot\right\rangle_h).
\end{align*}
Lemma\,\ref{lhz} asserts that the Riemannian metric $\hat{g}$ is induced from $g_Q$ by $\frak{L}$.
In particular, the metric $\hat{g}$ is independent of $t.$

\smallskip

In the next two lemmas we relate $\hat{g}$ and the associated Levi-Civita connection to $g_t$ and $\nabla^t$, respectively.

\begin{lemma}
\label{independent}
For each $p\in M$ and all $t\in\R$ we have $$\hat{g}(X,Y)={g_{t}}(X,\tfrac{1}{2}(\eins+{A_{t}}^2)Y)=
{g_{0}}(X,\tfrac{1}{2}(\eins+{A_{0}}^2)Y)\,\,\mbox{for all}\,\,X,Y\in\Gamma(TM).$$ In particular $\hat{g}$ is independent of $t$.
\end{lemma}
\begin{proof}
Since $\hat{F}_{t}=e^{-it}\hat{F}_{0}$ we have
\begin{align*}
d\hat{{F}}_{t}=e^{-it}d\hat{F}_{0}.
\end{align*}
By the definition of $\hat{g}$ and the preceding identity this gives
\begin{align*}
\hat{g}(X,Y)=\mbox{Re}\left\langle d\hat{F}_{t}X,d\hat{F}_{t}Y\right\rangle_h=\mbox{Re}\left\langle d\hat{F}_{0}X,d\hat{F}_{0}Y\right\rangle_h,
\end{align*}
for all $X,Y\in\Gamma(TM)$, and therefore
\begin{align*}
\hat{g}(X,Y)=g_{t}(X,\tfrac{1}{2}(\eins+A_{t}^2)Y)=g_{0}(X,\tfrac{1}{2}(\eins+A_{0}^2)Y).
\end{align*}
\auxqed
\end{proof}

\begin{remark}
The induced metric $\hat{g}$ is the arithmetic mean of some $g_{t}$:\\ 
let $\phi\in\left(0,\pi/2g\right)$ be given and define the arc $\xi_k$ for $k\in\mathbb{N}$ by
$\xi_k=\phi+k\pi/2g$.
Then $\hat{g}=\frac{1}{2}(g_{\phi}+g_{\phi+\pi/2})$ and $\hat{g}=\frac{1}{2g}\sum_{k=0}^{2g-1}{g_{\xi_k}}$.
\end{remark}

We denote the Levi-Civita connection associated to $\hat{g}$ by $\nabla$. The next lemma relates $\nabla$ and $\nabla^t$.

\begin{lemma}
\label{nik2}
The connections $\nabla$ and $\nabla^{t}$ are related by
\begin{align*}
\nabla_XY=\nabla_X^{t}Y+A_{t}(\eins+A_{t}^2)^{-1}(\nabla_X^{t}A_{t})Y.
\end{align*}
\end{lemma}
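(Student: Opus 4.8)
The plan is to read the identity as the standard comparison of the Levi-Civita connections of two Riemannian metrics on $M$ that differ by a symmetric, positive-definite endomorphism field, and then to exploit the very special algebraic form of that endomorphism. By Theorem \ref{independent} I may write $\hat g(X,Y)=g_t(S_tX,Y)$ with $S_t:=\frac12(\eins+A_t^2)$, which is self-adjoint and positive definite with respect to $g_t$ because the shape operator $A_t$ is $g_t$-self-adjoint. I then set $C(X,Y):=\nabla_XY-\nabla^t_XY$; since $\nabla$ and $\nabla^t$ are both torsion-free, $C$ is a \emph{symmetric} $(2,1)$-tensor, and the whole content of the lemma is the computation of $C$.

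First I would derive the equation that pins down $C$. Substituting $\nabla_XY=\nabla^t_XY+C(X,Y)$ into the $\hat g$-compatibility identity $X\hat g(Y,Z)=\hat g(\nabla_XY,Z)+\hat g(Y,\nabla_XZ)$, expanding $\hat g=g_t(S_t\,\cdot\,,\,\cdot\,)$, and using $g_t$-compatibility of $\nabla^t$ together with the Leibniz rule $\nabla^t_X(S_tY)=(\nabla^t_XS_t)Y+S_t\nabla^t_XY$, the terms carrying $\nabla^t_XY$ and $\nabla^t_XZ$ cancel and one is left with
$$ g_t\big((\nabla^t_XS_t)Y,Z\big)=g_t\big(S_tC(X,Y),Z\big)+g_t\big(S_tC(X,Z),Y\big). $$
This plays the role of the Koszul formula: a symmetric $C$ is uniquely determined by it, because any symmetric solution $E$ of the homogeneous version yields a $(0,3)$-tensor $(X,Y,Z)\mapsto g_t(S_tE(X,Y),Z)$ that is symmetric in its first two slots and skew in its last two, hence vanishes, and $S_t$ is invertible.

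It then remains to check that the stated $C(X,Y)=A_t(\eins+A_t^2)^{-1}(\nabla^t_XA_t)Y$ is both symmetric and satisfies the displayed characterising identity. Symmetry in $X,Y$ is exactly the Codazzi equation $(\nabla^t_XA_t)Y=(\nabla^t_YA_t)X$, valid since $\Sph^{n+1}$ has constant curvature; here one uses that the prefactor $A_t(\eins+A_t^2)^{-1}$ carries no vector argument. For the identity I would use that $A_t$ commutes with $(\eins+A_t^2)^{-1}$ (both are functions of $A_t$), so that $S_tC(X,Y)=\frac12 A_t(\nabla^t_XA_t)Y$, together with $\nabla^t_XS_t=\frac12\big((\nabla^t_XA_t)A_t+A_t(\nabla^t_XA_t)\big)$. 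After substitution both sides become expressions in $g_t((\nabla^t_XA_t)\,\cdot\,,\,\cdot\,)$, and the identity collapses to the self-adjointness of $\nabla^t_XA_t$ (inherited from that of $A_t$).

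The main obstacle I anticipate is organisational rather than conceptual: keeping the two metrics and their compatible connections strictly apart in the derivation of the characterising identity, and tracking precisely where self-adjointness of $\nabla^t_XA_t$ versus the Codazzi symmetry enters — the former secures the tensorial identity, the latter the symmetry of $C$ that makes it a genuine difference of Levi-Civita connections. As an alternative to guessing and verifying, one can solve the characterising identity directly by the cyclic-permutation trick; the total symmetry of $\alpha(X,Y,Z)=g_t((\nabla^t_XA_t)Y,Z)$ then collapses the resulting six terms to the single stated term, at the cost of more index bookkeeping. Finally I would note the implicit restriction to regular $t$, where $g_t$ is nondegenerate and $A_t$ is globally defined.
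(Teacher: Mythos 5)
Your proof is correct, but it is organized genuinely differently from the paper's. The paper derives the formula constructively from the Koszul formula for $\hat g$: it expands the six Koszul terms in terms of $\nabla^t$, uses Theorem\,\ref{independent} to write $(\nabla^t_X\hat g)(Y,Z)=\tfrac12 g_t\big(\{(\nabla^t_XA_t)A_t+A_t(\nabla^t_XA_t)\}Y,Z\big)$, and then collapses the cyclic sum $(\nabla^t_X\hat g)(Y,Z)+(\nabla^t_Y\hat g)(X,Z)-(\nabla^t_Z\hat g)(X,Y)$ to $g_t(A_t(\nabla^t_XA_t)Y,Z)$ using the Codazzi symmetry, i.e., the full symmetry of $\alpha^t$. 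You instead characterize the difference tensor $C=\nabla-\nabla^t$ abstractly --- symmetric by torsion-freeness, pinned down by the compatibility identity together with the standard fact that a $3$-tensor symmetric in its first two slots and skew in its last two vanishes --- and then verify that the claimed expression satisfies the characterization. The ingredients coincide (Theorem\,\ref{independent}, the Codazzi equation, self-adjointness of $A_t$ and hence of $\nabla^t_XA_t$), but the division of labor differs: in the paper Codazzi collapses the cyclic sum, whereas in your argument Codazzi only secures the symmetry of the candidate $C$, and the verification itself runs entirely on self-adjointness. Your route buys a cleaner logical structure, with no cyclic bookkeeping, at the cost of having to know the answer in advance; the paper's route produces the formula without guessing. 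Your closing remark about restricting to regular $t$, where $A_t$ is globally defined, matches the paper's standing assumption $t\in\mathcal{R}$.
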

\begin{proof}
The Koszul formula yields
\begin{align*}
2\hat{g}(\nabla_XY,Z)=(\nabla^t_X\hat{g})(Y,Z)+(\nabla^t_Y\hat{g})(X,Z)-(\nabla^t_Z\hat{g})(X,Y)+2\hat{g}(\nabla^t_XY,Z).
\end{align*}
By Lemma\,\ref{independent} we get 
$$(\nabla^t_{X_1}\hat{g})(X_2,X_3)=\tfrac{1}{2}g_t(\big((\nabla^t_{X_1}A_t)A_t+A_t(\nabla^t_{X_1}A_t)\big)X_2,X_3).$$
Consequently, using $(\nabla^t_{X_1}A_t)X_2=(\nabla^t_{X_2}A_t)X_1$, we have
\begin{align*}
(\nabla^t_X\hat{g})(Y,Z)+(\nabla^t_Y\hat{g})(X,Z)-(\nabla^t_Z\hat{g})(X,Y)=g_t(A_t(\nabla^t_XA_t)Y,Z).
\end{align*}
Hence we obtain
\begin{align*}
\hat{g}(\nabla_XY-\nabla^t_XY,Z)=\tfrac{1}{2}g_t(A_t(\nabla^t_XA_t)Y,Z)=\hat{g}(A_t(\eins+A_{t}^2)^{-1}(\nabla_X^{t}A_{t})Y,Z),
\end{align*}
where the last equality follows from Lemma\,\ref{independent}. Since this identity holds for arbitrary $Z\in\Gamma(TM)$, the claim is established.
\end{proof}

\subsubsection{The tensor $\alpha$}
Throughout this subsection we assume $t\in\left[0,2\pi\right]\cap\mathcal{R}$, see the subsection \lq Global structure\rq\, of Section\,1 for the definition of the regular set $\mathcal{R}$.
The symmetric tensor $\alpha^t$, which is given by the formula
\begin{align*}
\alpha^t(X,Y,Z)=g_t((\nabla_X^tA_t)Y,Z),
\end{align*}
is one of the fundamental invariants used in the previous classification approaches though
usually encoded in some much less invariant Maurer-Cartan forms $\Lambda_t$.
The really interesting fact is that $\alpha$ coincides - up to a factor of two - with the 
second fundamental form of the Lagrangian submanifold, which gives a new, 
geometric interpretation for this important tensor.
When formulating identities in terms of $\alpha^t$ in Section \ref{sec3}, further advantages of working with $\alpha^t$ instead of $\Lambda_t$ will become obvious.
In this subsection we establish some basic properties of $\alpha^t$.

\begin{definition}
\label{defa}
For $t\in\left[0,2\pi\right]\cap\mathcal{R}$
define $\alpha^t:\Gamma(TM)^3\rightarrow C^{\infty}(M,\mathbb{R})$ by
\begin{align*}
\alpha^t(X,Y,Z)=g_{t}((\nabla^t_{X}A_t)Y,Z),
\end{align*} 
for $X,Y,Z\in\Gamma(TM)$.
\end{definition}

\begin{lemma}
\label{van}
The map $\alpha^t:\Gamma(TM)^3\rightarrow C^{\infty}(M,\mathbb{R})$ is symmetric and trilinear.
Furthermore, for each $j\in\left\{1,...,g\right\}$ the restriction
$\alpha^t:D_j\times D_j\times TM \rightarrow \mathbb{R}$ is zero. In particular the map $\alpha^t$ is trace free.
\end{lemma}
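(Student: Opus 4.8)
The plan is to first establish trilinearity and full symmetry of $\alpha^t$ under the symmetric group $S_3$, and then to deduce both the vanishing on $D_j\times D_j\times TM$ and trace-freeness as formal consequences of symmetry together with one pointwise identity.

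\emph{Trilinearity.} Since $g_t$ is bilinear, $\alpha^t$ is plainly linear in its last slot. Linearity over $C^\infty(M)$ in $X$ and in $Y$ follows from the fact that $\nabla^tA_t$ is a genuine $(1,2)$-tensor: expanding $(\nabla^t_XA_t)Y=\nabla^t_X(A_tY)-A_t\nabla^t_XY$ and replacing $X$ by $fX$ or $Y$ by $fY$, the derivative-of-$f$ contributions cancel, giving $(\nabla^t_{fX}A_t)Y=f(\nabla^t_XA_t)Y=(\nabla^t_XA_t)(fY)$.

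\emph{Symmetry.} I would prove symmetry in the first two and in the last two arguments separately; since the transpositions $(1\,2)$ and $(2\,3)$ generate $S_3$, this yields invariance under all permutations. The identity $\alpha^t(X,Y,Z)=\alpha^t(Y,X,Z)$ is exactly the Codazzi equation $(\nabla^t_XA_t)Y=(\nabla^t_YA_t)X$ for a hypersurface in the space form $\Sph^{n+1}$ — the very relation already used in the proof of Lemma\,\ref{nik2}. For $\alpha^t(X,Y,Z)=\alpha^t(X,Z,Y)$ I would differentiate the self-adjointness relation $g_t(A_tY,Z)=g_t(Y,A_tZ)$ along $X$, using $\nabla^tg_t=0$; the terms containing $A_t\nabla^t_XY$ and $A_t\nabla^t_XZ$ cancel by self-adjointness of $A_t$, leaving $g_t((\nabla^t_XA_t)Y,Z)=g_t((\nabla^t_XA_t)Z,Y)$. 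In particular $\nabla^t_XA_t$ is itself $g_t$-self-adjoint, and $\alpha^t$ is fully symmetric.

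\emph{Vanishing and trace-freeness.} The computational input is the pointwise identity, valid for a local section $Y$ of $D_j$,
\[
(\nabla^t_XA_t)Y=(\lambda^0_j\eins-A_t)\nabla^t_XY ,
\]
which follows from $A_t=\sum_k\lambda^t_k\pi_k$ with the $\lambda^t_k$ constant, so that $A_tY=\lambda^t_jY$ and hence $\nabla^t_X(A_tY)=\lambda^t_j\nabla^t_XY$ (tensoriality lets me evaluate pointwise after extending $Y$ into $D_j$). Now let $X,Y\in D_j$ and let $Z$ be arbitrary. Using full symmetry I would rewrite $\alpha^t(X,Y,Z)=\alpha^t(Z,X,Y)$, placing the two $D_j$-vectors in the differentiated slot and the pairing slot. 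Then, by the identity above with self-adjointness of $\lambda^t_j\eins-A_t$,
\[
\alpha^t(Z,X,Y)=g_t\big((\lambda^t_j\eins-A_t)\nabla^t_ZX,\,Y\big)=g_t\big(\nabla^t_ZX,\,(\lambda^t_j\eins-A_t)Y\big)=0,
\]
since $(\lambda^t_j\eins-A_t)Y=0$ for $Y\in D_j$. Trace-freeness is then immediate: choosing a $g_t$-orthonormal basis $\{e_a\}$ adapted to $TM=\bigoplus_jD_j$, each $e_a$ lies in some $D_j$, so $\alpha^t(e_a,e_a,Z)=0$ for every $a$ and every $Z$, and therefore $\sum_a\alpha^t(e_a,e_a,Z)=0$.

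The only delicate point I anticipate is the bookkeeping of the symmetry: the vanishing works precisely because both $D_j$-vectors must be routed into the ``$A_t$-slot'' and the ``pairing slot,'' where $\lambda^t_j\eins-A_t$ annihilates the $D_j$-component. The naive arrangement that leaves a $D_j$-vector in the direction slot yields $g_t((\lambda^t_j\eins-A_t)\nabla^t_XY,Z)$, which is not manifestly zero and must be converted through the $S_3$-symmetry established above; everything else is routine tensor calculus.
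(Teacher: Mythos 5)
Your proof is correct and takes essentially the same route as the paper's: Codazzi symmetry of $(\nabla^t A_t)$ (together with self-adjointness of $\nabla^t_X A_t$, which the paper leaves implicit), then routing the two $D_j$-vectors into the $A_t$-slot and the pairing slot via symmetry and using $(\nabla^t_X A_t)Y=(\lambda^t_j\eins-A_t)\nabla^t_X Y$ with self-adjointness of $\lambda^t_j\eins-A_t$, exactly as in the paper's computation $g_t((\nabla^t_X A_t)Y,Z)=g_t(\nabla^t_X Y,(\lambda^t_j-A_t)Z)=0$. The only blemish is the typo $\lambda^0_j$ in your displayed identity, which should read $\lambda^t_j$ as in your own derivation and subsequent use.
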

\begin{proof}
The tensor $\alpha^t$ is obviously trilinear. Since $M$ is a hypersurface in a constant curvature space,
the Codazzi equation states that
\begin{align*}
(\nabla^t_{X} A_t)Y=(\nabla^t_{Y} A_t)X.
\end{align*}
Hence $\alpha^t$ is symmetric as well.\\
Next we prove that $\alpha^t$ vanishes when we choose two of its entries to be in the same distribution.
Since $\alpha^t$ is symmetric we can assume without loss of generality that  $Y,Z\in D_j$ for a $j\in\left\{1,...,g\right\}$ and $X\in\Gamma(TM)$. Thus we get
\begin{align*}
\alpha^t(X,Y,Z)=g_{t}((\nabla^t_{X}A_t)Y,Z)=g_{t}(\nabla^t_{X}Y,(\lambda^t_j-A_t)Z)=0,
\end{align*}
which establishes the claim.
\auxqed
\end{proof}
 
Next we endow $\alpha^{t}$ with a geometric meaning by proving that $\alpha^{t}$ is - up to a constant factor - given by
the second fundamental form of $\frak{L}(M)\subset Q^n$. In particular, $\alpha^t$ is independent of $t\in\mathcal{R}$. In other words, 
we associate to each isoparametric hypersurface in a family of isoparametric hypersurface to same tensor.

\smallskip

We denote by $\hat{A}:\nu M\rightarrow \mbox{End}(\Gamma(TM))$ the shape operator the submanifold $(M,\hat{g})$ of $(Q^n,g_Q)$.
Furthermore, let  $\hat{\alpha}:\Gamma(TM)^3\rightarrow C^{\infty}(M,\mathbb{R})$ be the second fundamental form of the Lagrangian submanifold $\frak{L}(M)\subset Q^n$, i.e.,
\begin{align*}\hat{\alpha}(X,Y,Z)=\hat{g}({\hat{A}}_{N_{X}}Y,Z)
\end{align*} for $X,Y,Z\in\Gamma(TM)$.
Recall, that $N_X$ denotes the normal vector field introduced in Proposition\,\ref{palmer}. 
The next theorem establishes Theorem\,A of the introduction.

\begin{theorem}
\label{alpha}
For any $t\in \mathcal{R}$, the maps $\hat{\alpha}:\Gamma(TM)^3\rightarrow C^{\infty}(M,\mathbb{R})$ and $\alpha^t:\Gamma(TM)^3\rightarrow C^{\infty}(M,\mathbb{R})$ are related by
\begin{align*}
2\hat{\alpha}=\alpha^t.
\end{align*}
In particular the map $\alpha^t$ is independent of $t\in\mathcal{R}.$
\end{theorem}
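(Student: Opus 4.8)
The plan is to compute the second fundamental form $\hat\alpha$ of $\frak{L}(M)\subset Q_n$ directly and to use the horizontality of Lemma~\ref{lhz} to transport the whole computation into the flat space $\C^{n+2}$. First I would unwind the definition
\begin{align*}
\hat\alpha(X,Y,Z)=\hat g(\hat A_{N_X}Y,Z)=g_Q(\mathrm{II}(Y,Z),N_X),
\end{align*}
where $\mathrm{II}$ is the vector-valued second fundamental form of $\frak{L}(M)$ in $Q_n$ and $N_X=-i\,d\hat F_0 X$. The key point is that $N_X$ lies in $H_z$ with $z=\hat F_0(p)$, so by~(\ref{hz}) it is $\langle\cdot,\cdot\rangle_h$-orthogonal to $\operatorname{span}_{\C}\{z,\bar z\}$ and, by the Lagrangian condition, to $d\hat F_0(TM)$. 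Reading the tower $\hat F_0(M)\subset St_2(\R^{n+2})\subset\Sph^{2n+3}\subset\C^{n+2}$ together with the Riemannian submersion $\pi\colon St_2(\R^{n+2})\to Q_n$, and observing that $N_X$ is tangent to each intermediate manifold and horizontal for $\pi$, one sees that pairing against $N_X$ kills every correction term: each intermediate second fundamental form is normal to the next manifold, and the vertical contributions of $\pi$ on horizontal vectors are again orthogonal to $N_X$. This would reduce the claim to
\begin{align*}
\hat\alpha(X,Y,Z)=\mathrm{Re}\,\langle D_Y(d\hat F_0 Z),N_X\rangle_h,
\end{align*}
with $D$ the flat derivative of $\C^{n+2}$.

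Next I would evaluate $D_Y(d\hat F_0 Z)$ from $\hat F_0=\tfrac{1}{\sqrt2}(F_0+i\nu_0)$, using the Gauss formula for $M\subset\Sph^{n+1}\subset\R^{n+2}$ and $d\nu_0=-dF_0 A_0$. Writing $N_X=-\tfrac{1}{\sqrt2}(dF_0 A_0 X+i\,dF_0 X)$, the terms proportional to the position vector $F_0$ and to $\nu_0$ drop out upon pairing, since $dF_0(\cdot)$ is orthogonal to both. What remains are the tangential contributions, and using the self-adjointness of $A_0$ the two terms built from $\nabla^0_Y Z$ cancel, leaving
\begin{align*}
\hat\alpha(X,Y,Z)=\tfrac12\,g_0((\nabla^0_Y A_0)Z,X).
\end{align*}
By Lemma~\ref{van} the tensor $\alpha^0$ is totally symmetric, so $g_0((\nabla^0_Y A_0)Z,X)=\alpha^0(Y,Z,X)=\alpha^0(X,Y,Z)$, which yields $2\hat\alpha=\alpha^0$.

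To obtain the statement for every $t\in\mathcal R$ and the asserted $t$-independence at once, I would note that the geometric normal $d\pi(N_X)=J\,d\frak{L}X$ does not depend on $t$: since $\hat F_t=e^{-it}\hat F_0$, the phase $e^{-it}$ in $-i\,d\hat F_t X$ is exactly cancelled by the shift of base point along the Hopf fibre, so $d\pi_{\hat F_t(p)}(-i\,d\hat F_t X)=d\pi_{\hat F_0(p)}(-i\,d\hat F_0 X)$. Hence the identical computation carried out with $(g_t,A_t,\nabla^t)$ in place of $(g_0,A_0,\nabla^0)$ gives $2\hat\alpha=\alpha^t$ for each $t\in\mathcal R$; since the left-hand side is manifestly independent of $t$, so is $\alpha^t$.

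The main obstacle is the reduction in the first paragraph, namely the rigorous verification that pairing the second fundamental form of $\frak{L}(M)\subset Q_n$ against the horizontal normal $N_X$ coincides with the flat Hessian pairing $\mathrm{Re}\,\langle D_Y(d\hat F_0 Z),N_X\rangle_h$. This rests on checking that all discarded directions --- the Hopf-vertical direction $iz$, the quadric-normal directions $z$ and $\bar z$, and the tangential part $d\frak{L}(\nabla_Y Z)$ --- are genuinely $\langle\cdot,\cdot\rangle_h$-orthogonal to $N_X$, which in turn follows from the Lagrangian condition and the defining relations~(\ref{hz}) of $H_z$. Once this is in place, the remaining steps are routine.
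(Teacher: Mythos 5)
Your proposal is correct and takes essentially the same route as the paper: both proofs reduce the quadric covariant derivative to the flat derivative on $\C^{n+2}$ by pairing against the horizontal normal $N_X$ (so that the second fundamental forms of the intermediate inclusions and the vertical parts of the submersion drop out), then evaluate the flat pairing via $d\hat F=\tfrac{1}{\sqrt2}\,dF(\eins-iA)$ and the Gauss--Weingarten formula, and finally obtain $t$-independence from the phase cancellation coming from $\hat F_t=e^{-it}\hat F_0$. The only cosmetic difference is that the paper carries out the computation for arbitrary $t\in\mathcal{R}$ in one pass, whereas you compute at $t=0$ and then observe the argument repeats verbatim for general $t$.
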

\begin{proof}
Throughout the proof fix $X,Y,Z\in\Gamma(TM)$.
Furthermore, we use the convention $N_Z=-i\,d\hat{F_0}Z$.
By definition of $\hat{A}$ and skew symmetry of $J$ we get
\begin{align*}
\hat{g}(\hat{A}_{N_Z}X,Y)=\mbox{Re}\langle \nabla^{Q^n}_XN_Z,d\hat{F}_tY\rangle_h&=-\mbox{Re}\langle J\nabla^{Q^n}_X(d\hat{F}_tZ),d\hat{F}_tY\rangle_h\\&=\mbox{Re}\langle \nabla^{Q^n}_X(d\hat{F}_tZ),J d\hat{F}_tY\rangle_h.
\end{align*}
Moreover, we have
\begin{align}
\label{sit}
\mbox{Re}\langle \nabla^{Q^n}_Xd\hat{F}_tZ,J d\hat{F}_tY\rangle_h=
\mbox{Re}\langle \nabla^{St}_Xd\hat{F}_tZ,J d\hat{F}_tY\rangle_h=
\mbox{Re}\langle d_Xd\hat{F}_tZ,J d\hat{F}_tY\rangle_h,
\end{align}
 where $\nabla^{St}$ and $d$ denote the Levi-Civita connection of the Stiefel manifold and Euclidean space, respectively.
Indeed, the first equality holds since the image of $\hat{F}_{t}$ is horizontal with respect to the projection $\pi:St_2(\R^{n+2})\rightarrow Q_n$, see Lemma\,\ref{lhz}, and $\pi$ is a Riemannian submersion.
The second equality simply follows since $St$ is contained in the Euclidean space.

\smallskip

Plugging $d\hat{F}_t=\frac{1}{\sqrt{2}}dF_t(\eins-iA_t)$ into (\ref{sit}), we obtain
\begin{align*}
\mbox{Re}\langle \nabla^{Q^n}_Xd\hat{F}_tZ,J d\hat{F}_tY\rangle_h=\tfrac{1}{2}\langle d_X(dF_tZ),dF_tA_tY\rangle_{\R^{n+2}}-\tfrac{1}{2}\langle d_X(dF_tA_tZ),dF_tY\rangle_{\R^{n+2}}.
\end{align*}
Since the Weingarten equation is given by
\begin{align*}
d_X(dF_tZ)=dF_t\nabla^t_XZ+\langle A_tX,Z\rangle\nu,
\end{align*}
we get
\begin{align*}
\mbox{Re}\langle \nabla^{Q^n}_X(d\hat{F}_tZ),N_Y\rangle_h&=-\mbox{Re}\langle \nabla^{Q^n}_Xd\hat{F}_tZ,J d\hat{F}_tY\rangle_h\\
&=-\tfrac{1}{2}g_t(\nabla^t_XZ,A_tY)+\tfrac{1}{2}g_t(\nabla^t_X(A_tZ),Y)\\&=\tfrac{1}{2}\,g_t((\nabla^t_XA_t)Y,Z)=\tfrac{1}{2}\alpha^t(X,Y,Z).
\end{align*}
Using the identity
\begin{align*}
\mbox{Re}\langle d_Xd\hat{F}_tZ,J d\hat{F}_tY\rangle_h=\mbox{Re}\langle d_Xd\hat{F}_0Z,J d\hat{F}_0Y\rangle_h,
\end{align*}
an analogous calculation yields 
\begin{align*}
\mbox{Re}\langle d_Xd\hat{F}_0Z,J d\hat{F}_0Y\rangle_h=\tfrac{1}{2}\alpha^0(X,Y,Z).
\end{align*}
Thus we in particular get
$\alpha^0(X,Y,Z)=\alpha^t(X,Y,Z)$,
which proves the claim. 
\end{proof}

Since $\alpha^t$ is independent of $t\in\mathcal{R}$, we will denote this tensor henceforth simply by $\alpha$.

\subsection{The invariant $B\otimes B^{-1}$}
\label{spro}
In this section we assign to each isoparametric hypersurface $(M,g_t)$ an operator $B_t$, and show that $B_t\otimes B_t^{-1}$ is independent of $t$.

\begin{definition}
Let $t\in\left[0,2\pi\right]\cap\mathcal{R}$. Define $B_{t}:\Gamma(TM)\rightarrow \Gamma(TM)\otimes\C$ via
\begin{align*}
B_{t}=(A_{t}+i\eins)\,(A_{t}-i\eins)^{-1}.
\end{align*}
\end{definition}

In what follows we denote by $\hat{g}$ also the complex bilinear extension of $\hat{g}$.
By the very definition of $B_t$ we get the following lemma.

\begin{lemma}
$\hat{g}(B_tX,Y)=-(\hat{F}_t^{*}q)(X,Y).$
\end{lemma}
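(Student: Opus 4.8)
The plan is to compute both sides of the asserted identity as explicit bilinear forms in $A_t$ and to match them. First I would pull back $q$ using the formula $d\hat{F}_t=\tfrac{1}{\sqrt{2}}\,dF_t(\eins-iA_t)$ that was already derived in the proof of the Lagrangian immersion property. Since $q(\,\cdot\,,\,\cdot\,)=\langle\,\cdot\,,\,\cdot\,\rangle_{\C^{n+2}}$ is \emph{complex bilinear} (as opposed to the Hermitian form defining $\hat{g}$), and since it restricts on the real image of $dF_t$ to the induced metric $g_t$, expanding
\[
(\hat{F}_t^{*}q)(X,Y)=\tfrac12\,\langle dF_t(\eins-iA_t)X,\ dF_t(\eins-iA_t)Y\rangle_{\C^{n+2}}
\]
by bilinearity yields the four terms $g_t(X,Y)$, $-i\,g_t(A_tX,Y)$, $-i\,g_t(X,A_tY)$ and $-g_t(A_tX,A_tY)$.

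Next I would invoke the $g_t$-self-adjointness of $A_t$ to merge the two middle terms, giving $(\hat{F}_t^{*}q)(X,Y)=\tfrac12\,g_t\big((\eins-A_t^{2}-2iA_t)X,Y\big)$, and then recognize the operator as a perfect square, namely $\eins-A_t^{2}-2iA_t=-(A_t+i\eins)^{2}$. This produces the closed form $(\hat{F}_t^{*}q)(X,Y)=-\tfrac12\,g_t\big((A_t+i\eins)^{2}X,Y\big)$.

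For the left-hand side I would use Theorem\,\ref{independent} to write $\hat{g}(B_tX,Y)=g_t\big(\tfrac12(\eins+A_t^{2})B_tX,\,Y\big)$, moving the $g_t$-self-adjoint factor $\tfrac12(\eins+A_t^{2})$ into the first slot. Exploiting the factorization $\eins+A_t^{2}=(A_t+i\eins)(A_t-i\eins)$ together with the definition $B_t=(A_t+i\eins)(A_t-i\eins)^{-1}$, and observing that all of these operators are polynomials (and inverse polynomials) in $A_t$ and hence commute, the product collapses to $\tfrac12(A_t+i\eins)^{2}$. Comparing with the previous paragraph then gives $\hat{g}(B_tX,Y)=\tfrac12\,g_t\big((A_t+i\eins)^{2}X,Y\big)=-(\hat{F}_t^{*}q)(X,Y)$, which is the claim.

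The only genuine obstacle here is bookkeeping rather than ideas: one must keep the complex bilinear form $q$ strictly separate from the Hermitian pairing used to build $\hat{g}$, track the $\tfrac{1}{\sqrt{2}}$ prefactors from $d\hat{F}_t$, and handle the signs introduced by $i^{2}=-1$ correctly. Once these are in order, the two sides are matched by the single algebraic identity $\eins+A_t^{2}=(A_t+i\eins)(A_t-i\eins)$ and the commutativity of all functions of $A_t$.
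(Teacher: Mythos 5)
Your proof is correct: all the algebra checks out, and it is precisely the computation the paper leaves implicit — the paper states the lemma follows ``by the very definition of $B_t$'' (after complex-bilinearly extending $\hat{g}$), and your expansion of $\hat{F}_t^{*}q$ via $d\hat{F}_t=\tfrac{1}{\sqrt{2}}dF_t(\eins-iA_t)$ together with Theorem\,\ref{independent} and the factorization $\eins+A_t^2=(A_t+i\eins)(A_t-i\eins)$ supplies exactly the omitted bookkeeping.
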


In other words $B_t$ encodes the metric $\hat{F}_t^{*}q$ and thus arises as a natural invariant of the Lagrangian submanifold $\frak{L}(M)\subset Q^n$.

\begin{lemma}
\label{bthe}
The operators $B_{t}$ are trace free and satisfy the identities
\begin{align*}
B_{t}^g=-e^{-2git}\eins,\hspace{0.5cm} B_{t}^{-1}=-e^{2git}B_t^{g-1},\hspace{0.5cm}\overline{B_{t}}=B_{t}^{-1},\hspace{0.5cm}B_{t+\phi}=e^{-2i\phi}B_{t}\hspace{0.2cm}\forall\,\phi\in \R.
\end{align*}
\end{lemma}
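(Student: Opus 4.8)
The key observation is that everything follows from the explicit spectral description of $A_t$ together with the eigenvalue formula $\lambda_j^0 = \cot(\phi + (j-1)\pi/g)$ derived in the preliminaries. My plan is to first establish the scalar identities on eigenvalues and then lift them to operator identities via the spectral decomposition $A_t = \sum_{j=1}^g \lambda_j^t \pi_j$, where $\pi_j$ are the (mutually orthogonal, idempotent) projections onto the curvature distributions $D_j$. Since the $\pi_j$ commute and sum to the identity, any function of $A_t$ acts as that function applied to each eigenvalue $\lambda_j^t$. In particular, writing $b_j^t := (\lambda_j^t + i)(\lambda_j^t - i)^{-1}$ for the eigenvalue of $B_t$ on $D_j$, it suffices to prove each claimed identity at the level of the scalars $b_j^t$.

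The main computation is to convert the cotangent eigenvalues into a clean exponential form. Using $\lambda_j^t = \cot(\theta_j - t)$ with $\theta_j = \phi + (j-1)\pi/g$, the Cayley-type transform $b_j^t = (\cot\psi + i)(\cot\psi - i)^{-1}$ with $\psi = \theta_j - t$ simplifies: multiplying numerator and denominator by $\sin\psi$ gives $b_j^t = (\cos\psi + i\sin\psi)(\cos\psi - i\sin\psi)^{-1} = e^{i\psi}/e^{-i\psi} = e^{2i\psi} = e^{2i(\theta_j - t)}$. Hence
\begin{align*}
b_j^t = e^{2i(\phi + (j-1)\pi/g - t)} = e^{2i\phi}\, e^{2\pi i (j-1)/g}\, e^{-2it}.
\end{align*}
From this single formula all four identities drop out. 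First, $(b_j^t)^g = e^{2gi\phi} e^{2\pi i (j-1)} e^{-2git}$; since $e^{2\pi i(j-1)}=1$ and our normalization $\phi = \pi/(2g)$ gives $e^{2gi\phi} = e^{i\pi} = -1$, we obtain $(b_j^t)^g = -e^{-2git}$, independent of $j$, which lifts to $B_t^g = -e^{-2git}\eins$. Second, $B_t^{-1} = -e^{2git}B_t^{g-1}$ follows immediately by multiplying the first identity by $-e^{2git}B_t^{-1}$. Third, since $\lambda_j^t$ is real, $\overline{b_j^t} = (\lambda_j^t - i)(\lambda_j^t + i)^{-1} = (b_j^t)^{-1}$, giving $\overline{B_t} = B_t^{-1}$; equivalently this is visible from $\overline{e^{2i(\theta_j-t)}} = e^{-2i(\theta_j-t)}$. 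Fourth, the substitution $t \mapsto t+\phi$ produces a factor $e^{-2i\phi}$ uniformly across all eigenvalues, whence $B_{t+\phi} = e^{-2i\phi}B_t$ for all $\phi \in \R$.

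Finally, trace-freeness requires summing the eigenvalues with multiplicities: $\tr B_t = \sum_{j=1}^g m_j\, b_j^t = e^{2i\phi}e^{-2it}\sum_{j=1}^g m_j\, e^{2\pi i(j-1)/g}$. The vanishing of this sum is the one genuinely nontrivial point, and I expect it to be the main obstacle: it does not follow from the spectral formula alone but uses the multiplicity constraint $m_i = m_{i+2}$ (indices cyclic mod $g$) established by Münzner in the preliminaries. When all $m_j$ are equal the sum is $m\sum_j e^{2\pi i(j-1)/g} = 0$ as a sum of all $g$-th roots of unity; in the remaining case $g$ is even with two alternating multiplicity values $m', m''$ on even/odd indices, and one splits the sum over even and odd $j$, each of which is (a scalar multiple of) a complete set of $(g/2)$-th roots of unity times a common phase, hence zero. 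Assembling these two subcases gives $\tr B_t = 0$, completing the proof.
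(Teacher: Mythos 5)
Your argument is correct where the lemma itself is correct, and its core coincides with the paper's proof: the paper likewise observes that each $X\in D_j$ is an eigenvector of $B_t$ with eigenvalue $\mu_j^t=e^{2i(\theta_j-t)}$, deduces $B_t^g=-e^{-2git}\eins$ from the normalized form $\theta_j=(2(j-1)+1)\pi/2g$, obtains the second identity from the first, and the third from the definition of $B_t$. You deviate in two places. For the fourth identity the paper derives the operator relation $A_{t+\phi}=(\eins+\cot(\phi)A_t)(\cot(\phi)\eins-A_t)^{-1}$ (by the same computation as Lemma~\ref{shapeoperator}) and then invokes the definition of $B_t$, whereas you read $B_{t+\phi}=e^{-2i\phi}B_t$ directly off the eigenvalue formula; your route is shorter and equally rigorous once the eigenvalue formula is in hand. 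More substantively, the paper's proof is entirely silent on trace-freeness, while you prove it from $\tr B_t=e^{2i(\phi-t)}\sum_{j}m_j\,e^{2\pi i(j-1)/g}$ together with M\"unzner's constraint $m_i=m_{i+2}$; this is a genuine addition, and your diagnosis that this is the only point not following formally from the spectral formula is accurate. One caveat: your roots-of-unity splitting requires $g/2\geq 2$ in the unequal-multiplicity case. For $g=2$ the ``complete set of $(g/2)$-th roots of unity'' is the single element $1$, and in fact $\tr B_t=i(m_1-m_2)e^{-2it}$, which does not vanish for the products $\Sph^{d_1}(r_1)\times\Sph^{d_2}(r_2)$ with $d_1\neq d_2$ (trace-freeness also fails for $g=1$). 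So the trace-free assertion, and your proof of it, are valid exactly for $g\geq 3$; you should state this restriction explicitly. This is harmless for the paper's purposes, since the structural results that use $B_t$ concern $g\in\{3,4,6\}$, but it is a hypothesis the lemma as stated suppresses and the paper's own proof never confronts.
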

\begin{proof}
Every $X\in D_j$ is an eigenvector of $B_{t}$ with 
eigenvalue $\mu_{j}^{t}\in\C$ given by $\mu_{j}^{t}=e^{2i(\theta_{j}-t)}$.
Using the special form of $\theta_j$, we obtain $B_t^g=-e^{-2git}\eins$.
The second identity is an immediate consequence of the first identity.
Moreover, the third equation follows from the definition of $B_t$.
Finally, an argument analogous to the proof of Lemma\,\ref{shapeoperator} gives the identity
\begin{align*}
{A_{t+\phi}}=(\eins+\cot(\phi){A_{t}})(\cot(\phi)\eins-{A_{t}})^{-1}.
\end{align*}
and hence the fourth identity follows from the very definition of $B_t$.
\end{proof}

At first glance it might appear wrong to work with the operator $B_t$
since it depends on the parameter $t$. As it turns out, however, all relevant
identities factor through the operator $B_t\otimes B_t^{-1}$, which is
independent of $t$.

\begin{corollary}
The expression $B_t\otimes B_t^{-1}$ is independent of $t\in\R$.
\end{corollary}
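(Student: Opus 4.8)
The proof should be an immediate consequence of the fourth identity of Lemma\,\ref{bthe}, namely $B_{t+\phi}=e^{-2i\phi}B_{t}$ for all $\phi\in\R$. The plan is to fix $t\in\R$, take an arbitrary $\phi\in\R$, and compare $B_{t+\phi}\otimes B_{t+\phi}^{-1}$ with $B_{t}\otimes B_{t}^{-1}$ directly. The whole point of passing to the tensor square $B_{t}\otimes B_{t}^{-1}$, rather than working with $B_{t}$ itself, is that the scalar phase factors appearing in the two tensor slots are mutually inverse and therefore cancel.

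Concretely, I would first rewrite the inverse factor. Since $B_{t+\phi}=e^{-2i\phi}B_{t}$ and $e^{-2i\phi}$ is a nonzero scalar, inverting yields $B_{t+\phi}^{-1}=e^{2i\phi}B_{t}^{-1}$. Then, using bilinearity of the tensor product over $\C$, I would compute
\begin{align*}
B_{t+\phi}\otimes B_{t+\phi}^{-1}
=\big(e^{-2i\phi}B_{t}\big)\otimes\big(e^{2i\phi}B_{t}^{-1}\big)
=e^{-2i\phi}e^{2i\phi}\,\big(B_{t}\otimes B_{t}^{-1}\big)
=B_{t}\otimes B_{t}^{-1}.
\end{align*}
The crucial step is the cancellation $e^{-2i\phi}e^{2i\phi}=1$ of the opposite phases coming from the two factors. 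Since $t$ and $\phi$ are arbitrary, writing any $s\in\R$ as $s=t+(s-t)$ gives $B_{s}\otimes B_{s}^{-1}=B_{t}\otimes B_{t}^{-1}$, which is exactly the asserted independence of $t$.

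There is essentially no obstacle here: the statement is a one-line corollary, and the only thing one must observe is that the two scalars in Lemma\,\ref{bthe} are reciprocal, so they disappear under the tensor product. In writing this up I would emphasize that this cancellation is precisely what motivates the definition of the invariant as $B_{t}\otimes B_{t}^{-1}$, reconciling the apparent $t$-dependence of each individual operator $B_{t}$ with the $t$-independence of the data carried by the Lagrangian submanifold $\frak{L}(M)$.
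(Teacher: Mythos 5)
Your proof is correct and is essentially the paper's own argument: both rest on the scaling identity $B_{t+\phi}=e^{-2i\phi}B_t$ from Lemma\,\ref{bthe} (the paper specializes it to $B_t=e^{-2it}B_0$), invert to get the reciprocal phase, and cancel the two scalars under the tensor product. No gap; the write-up can stand as is.
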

\begin{proof}
By the last identity of Lemma\,\ref{bthe} we get $B_t=e^{-2it}B_0$, which implies $B_t^{-1}=e^{2it}B_0^{-1}$.  Hence, $B_t\otimes B_t^{-1}=B_0\otimes B_0^{-1}$.
\end{proof}

The preceding corollary allows us to introduce the tensor $B\otimes B^{-1}:=B_t\otimes B_t^{-1}$ for some $t\in\R$.

\smallskip

In the next lemma we express the projections on the distributions in terms of $B_{t}$. 
\begin{lemma}
\label{pro}
The projector $\pi_{j}:M\rightarrow D_j\subset M$ is given by
\begin{align*}
\pi_{j}=\tfrac{1}{g}\sum_{k=0}^{g-1}{B_{\theta_j}^k}.
\end{align*}
\end{lemma}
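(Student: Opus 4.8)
The plan is to diagonalize $B_{\theta_j}$ using the eigenvalue data already recorded in Lemma\,\ref{bthe} and then to collapse the sum by a root-of-unity orthogonality argument. Since $TM=\bigoplus_{k=1}^{g}D_k$ is an orthogonal decomposition into the curvature distributions, it suffices to verify the asserted identity on each $D_k$ separately, because both sides are linear operators on $TM$.

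First I would read off from Lemma\,\ref{bthe} that every $X\in D_k$ is an eigenvector of $B_t$ with eigenvalue $e^{2i(\theta_k-t)}$. Specializing to $t=\theta_j$ and using the explicit form $\theta_k=\phi+(k-1)\pi/g$ — so that $\theta_k-\theta_j=(k-j)\pi/g$, the parameter $\phi$ cancelling — I find that $B_{\theta_j}$ acts on $D_k$ as multiplication by the scalar $\zeta^{\,k-j}$, where $\zeta=e^{2\pi i/g}$ is a primitive $g$-th root of unity. Consequently $B_{\theta_j}^{\,m}$ acts on $D_k$ as $\zeta^{\,m(k-j)}$, and the operator $P:=\frac{1}{g}\sum_{m=0}^{g-1}B_{\theta_j}^{\,m}$ restricts to multiplication by $\frac{1}{g}\sum_{m=0}^{g-1}\zeta^{\,m(k-j)}$ on $D_k$.

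The final step is the geometric summation. For indices $j,k\in\{1,\dots,g\}$ one has $\zeta^{\,k-j}=1$ if and only if $k=j$; in that case every summand equals $1$ and the average is $1$. When $k\neq j$ the base $\zeta^{\,k-j}$ is a nontrivial $g$-th root of unity, so the geometric series sums to $\frac{\zeta^{\,g(k-j)}-1}{\zeta^{\,k-j}-1}=0$ since $\zeta^{\,g}=1$. Hence $P$ acts as $\eins$ on $D_j$ and as $0$ on every $D_k$ with $k\neq j$, which is exactly the defining property of the orthogonal projector $\pi_j$.

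There is no genuine obstacle here beyond keeping the bookkeeping clean; the one point deserving a remark is that although each individual operator $B_{\theta_j}^{\,m}$ is only complex-linear (mapping into $\Gamma(TM)\otimes\C$), the averaged operator $P$ has real eigenvalues $0$ and $1$ and therefore coincides with the genuinely real projection $\pi_j$. This is forced automatically by the computation above, since the off-diagonal blocks vanish and the diagonal block is the identity.
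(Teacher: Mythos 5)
Your proof is correct and follows essentially the same route as the paper: restrict to each curvature distribution $D_k$, use the eigenvalue $e^{2i(\theta_k-\theta_j)}$ of $B_{\theta_j}$ recorded in the proof of Lemma\,\ref{bthe}, and collapse the average via the geometric series for roots of unity, giving $\delta_{k,j}$ on each block. The paper's proof is just a more compact rendering of exactly this computation, so there is nothing to add.
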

\begin{proof}
Let $X\in D_m$ be given. Using $\theta_l\,\equiv(2(\,l-1)+1)\,\frac{\pi}{2g}\,\,\mbox{mod}\, \pi$ we get
\begin{align*}
\pi_{j}X =\tfrac{1}{g}\sum_{k=0}^{g-1}{B_{\theta_j}^k}X=\tfrac{1}{g}\sum_{k=0}^{g-1}{e^{2i(\theta_m-\theta_j)k}X}=\delta_{m,j}X.
\end{align*}
\auxqed
\end{proof}

From Lemma\,\ref{van} we have that $\alpha(X,Y,Z)$ vanishes if two of its entries lie in the same distribution, i.e. we have
$\alpha(\pi_kX,\pi_kY,Z)=0$ for each 
$k\in\{1,\ldots,g\}$.
In the next corollary we express the latter condition in terms of $B\otimes B^{-1}$.

\begin{corollary}
The condition $\alpha(\pi_kX,\pi_kY,Z)=0$,  where $k\in\{1,\ldots,g\}$, is equivalent to the identity
\begin{align*}
\sum_{j=0}^{g-1}\alpha(B^jX,B^{-j}Y,Z)=0.
\end{align*}
\end{corollary}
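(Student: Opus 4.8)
The plan is to convert the pointwise condition on the left into the averaged condition on the right by exploiting the spectral structure of $B_0$. Recall from the proof of Lemma~\ref{bthe} that every $X\in D_b$ is an eigenvector of $B_0$ with eigenvalue $\mu_b=e^{2i\theta_b}$, and that $\theta_b-\theta_c=(b-c)\pi/g$. Hence for $Y\in D_b$ and $Z\in D_c$ one has $B_0^jY=\mu_b^jY$ and $B_0^{-j}Z=\mu_c^{-j}Z$, so that $\alpha(X,B_0^jY,B_0^{-j}Z)=e^{2\pi i j(b-c)/g}\,\alpha(X,Y,Z)$. Summing the geometric series $\sum_{j=0}^{g-1}e^{2\pi i j(b-c)/g}=g\,\delta_{bc}$ (the base being a $g$-th root of unity, equal to $1$ exactly when $b=c$) and then decomposing arbitrary $Y,Z$ into their curvature-distribution components $\pi_bY$, $\pi_bZ$ (cf. Lemma~\ref{pro}), I obtain the key identity
\begin{align*}
\sum_{j=0}^{g-1}\alpha(X,B_0^jY,B_0^{-j}Z)=g\sum_{b=1}^{g}\alpha(X,\pi_bY,\pi_bZ),
\end{align*}
valid for all $X,Y,Z\in\Gamma(TM)$, using the complex bilinear extension of $\alpha$. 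The corollary then reduces to showing that the right-hand sum vanishes identically if and only if $\alpha(\pi_kX,\pi_kX,Z)=0$ for every $k$.

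For the forward implication I would polarize. Since $\alpha$ is totally symmetric (Lemma~\ref{van}), the expression $\alpha(\pi_kX,\pi_kX,Z)$, quadratic in $X$, has associated symmetric bilinear form $\alpha(\pi_kX_1,\pi_kX_2,Z)$; over $\C$ its vanishing for all $X$ forces $\alpha(\pi_kY,\pi_kZ,W)=0$ for all $Y,Z,W$ and every $k$. Summing over $k$ and using symmetry gives $\sum_b\alpha(W,\pi_bY,\pi_bZ)=0$, which by the key identity is exactly the right-hand side. For the converse I would specialize the variables: assuming $\sum_b\alpha(W,\pi_bY,\pi_bZ)=0$ for all $W,Y,Z$, substitute $Y=Z=\pi_kX$. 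Since the $\pi_b$ are orthogonal idempotents, $\pi_b\pi_kX=\delta_{bk}\pi_kX$, so the sum collapses to the single term $\alpha(W,\pi_kX,\pi_kX)$, and symmetry of $\alpha$ yields $\alpha(\pi_kX,\pi_kX,W)=0$, as required.

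I do not expect a serious obstacle; the argument is a spectral decomposition followed by a routine polarization. The only point needing care is the logical structure of the two quantified statements: the reduction to the diagonal sum $\sum_b\alpha(X,\pi_bY,\pi_bZ)$ is what drives the equivalence, and it is essential that in the converse one may feed in vectors lying in a single curvature distribution, so that the idempotents $\pi_b$ separate the terms. Finally, it is worth noting that for $\alpha$ both sides in fact vanish identically by Lemma~\ref{van}; the genuine content here is the algebraic \emph{equivalence} of the two conditions, established without invoking that vanishing, so that the very same mechanism applies verbatim to other symmetric tensors --- in particular to the curvature tensor in Theorem~B, where no automatic vanishing is available.
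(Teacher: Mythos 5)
Your proposal is correct and takes essentially the same route as the paper: your key identity $\sum_{j=0}^{g-1}B_0^j\otimes B_0^{-j}=g\sum_{b=1}^{g}\pi_b\otimes\pi_b$ is exactly what the paper invokes by applying Lemma~\ref{pro}, and your roots-of-unity computation is the same one that proves that lemma. The polarization in the forward direction and the collapse of the sum under $Y=Z=\pi_kX$ in the converse are just the details the paper's one-line proof leaves implicit.
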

\begin{proof}
Apply Lemma\,\ref{pro} to the identity  $\alpha(\pi_kX,\pi_kY,Z)=0$ and sum over $k$.
\end{proof}

Summarizing the results of the present section,
 we assign to each isoparametric hypersurface in a given family of isoparametric hypersurfaces a set of invariants
$$(\hat{g},\alpha,B\otimes B^{-1}),$$ 
which depends only on the isoparametric family it is contained in.

\section{Weyl and Symmetry identities}
\label{sec3}
In the present section we formulate all relevant identities in terms of the invariants
$\hat{g}, \alpha$ and $B\otimes B^{-1}$. We in particular, reveal the importance of the Weyl Identities and explain
how they enter the existing classification approaches.

\smallskip

In the first subsection we establish the fundamental submanifold equations for the Lagrangian submanifold of the complex quadric, i.e. the Codazzi, Gauss and Ricci equations.
In the second and third subsection we deduce the Weyl Identities and the Symmetry Identities, respectively.

\subsection{The Codazzi, Gauss and Ricci equations}
In this subsection we provide the Gauss equation, the Codazzi equation and the Ricci equation for the
submanifold $(M,\hat{g})\subset(Q^n,g_Q)$. Let $X,Y,Z,W\in \Gamma(TM)$ throughout.

\smallskip

For ease of notation, we introduce $T:\Gamma(TM)^4\rightarrow C^{\infty}(M,\mathbb{R})$
 by
\begin{align*}
T(X,Y,Z,W)=\Im(\hat{g}(B_{0}X,Y)\hat{g}(B_{0}^{-1}Z,W)),
\end{align*}
and the $(2,0)$-tensors $b$ and $ \overline{b}$ by
\begin{align*}
b(X,Y):=\hat{g}(B_0X,Y)\quad\mbox{and}\quad \overline{b}(X,Y):=\hat{g}(B_0^{-1}X,Y). 
\end{align*}
Furthermore, recall
\begin{align*}
(\alpha\owedge_{\hat{g}}\alpha)(X,Y,Z,W)=\tr_{\hat{g}}\big(\alpha(X,W,\,\cdot\,)\alpha(Y,Z,\,\cdot\,)-\alpha(X,Z,\,\cdot\,)\alpha(Y,W,\,\cdot\,)\big).
\end{align*}

In terms of this notation, the Codazzi and the Gauss equation take an easy form.

\begin{proposition}
\label{cod}
The Codazzi and the Gauss equation of the submanifold $(M,\hat{g})\subset(Q^n,g_Q)$ are given by
\begin{enumerate}
\item $(\nabla_{X}\alpha)(Y,Z,W)-(\nabla_{Y}\alpha)(X,Z,W)=2\left[T(X,Z,Y,W)+T(X,W,Y,Z)\right]$, \mbox{and}
\item $R(X,Y,Z,W)=(\hat{g}\owedge \hat{g}+b\owedge\overline{b}+\frac{1}{4}\alpha\owedge_{\hat{g}}\alpha)(X,Y,Z,W)$,
\end{enumerate}
respectively.
\end{proposition}

\begin{proof}
We start by proving that the Codazzi equation for $(M,\hat{g})\subset(Q^n,g_Q)$ is given by the first identity.
Recall the Codazzi equation
\begin{align*}
\hat{g}((\nabla_{X}\hat{A})_{N_{Z}}Y,W)-\hat{g}((\nabla_{Y}\hat{A})_{N_{Z}}X,W)=&R^Q(d\hat{F_t}X,d\hat{F_t}Y,d\hat{F_t}W,N_Z)\\=&-R^Q(d\hat{F_t}X,d\hat{F_t}Y,d\hat{F_t}W,J d\hat{F_t}Z).
\end{align*}
Using Theorem\,\ref{alpha}, the left and side simplifies to 
$$\frac{1}{2}\big((\nabla_{X}\alpha)(Y,Z,W)-(\nabla_{Y}\alpha)(X,Z,W)\big).$$ 
Thus it remains to prove that he right hand side is given by
$$2R^Q(d\hat{F_t}X,d\hat{F_t}Y,d\hat{F_t}Z,J d\hat{F_t}W)=-\big(T(X,Z,Y,W)+T(X,W,Y,Z)\big).$$
By a straightforward calculation we get
\begin{align*}
(g_Q\owedge g_Q+\omega\owedge\omega)(d\hat{F_t}X,d\hat{F_t}Y,d\hat{F_t}Z,Jd\hat{F_t}W)=0.
\end{align*}
Thus by Lemma\,\ref{hrq} we have
\begin{align*}
R^Q(d\hat{F_t}X,d\hat{F_t}Y,d\hat{F_t}Z,J d\hat{F_t}W)=q\owedge\overline{q}(d\hat{F_t}X,d\hat{F_t}Y,d\hat{F_t}Z,J d\hat{F_t}W).
\end{align*}
Since
\begin{align*}
&q(d\hat{F_t}X_1,d\hat{F_t}X_2)=-\hat{g}(X_1,B_tX_2), &q(d\hat{F_t}X_1,J d\hat{F_t}X_2)=-i\hat{g}(X_1,B_tX_2),\\& \overline{q}(d\hat{F_t}X_1,d\hat{F_t}X_2)=-\hat{g}(X_1,B_t^{-1}X_2), &\overline{q}(d\hat{F_t}X_1,Jd\hat{F_t}X_2)=i\hat{g}(X_1,B_t^{-1}X_2),
\end{align*}
for all $X_1,X_2\in\Gamma(TM)$, an easy calculation yields the result.

\smallskip

In order to prove the second identity, recall that
the Gauss equation for $(M,\hat{g})\subset (Q_n,g_Q)$ is given by
\begin{align*}
R(X,Y,Z,W)&=R^{Q}(d\hat{F_t}X,d\hat{F_t}Y,d\hat{F_t}Z,d\hat{F_t}W)\\&\,\,\,\,\,\,+g_Q(\Pi(X,W),\Pi(Y,Z))-g_Q(\Pi(X,Z),\Pi(Y,W)),
\end{align*}
where $\Pi$ denotes the second fundamental form of $(M,\hat{g})\subset (Q_n,g_Q)$.
Furthermore, recall that we have $R^{Q}=g_Q\owedge g_Q+\omega\owedge\omega+q\owedge\overline{q}$ by Lemma \ref{hrq}.
A straightforward calculation yields 
\begin{align*}
&q\owedge\overline{q}(d\hat{F_t}X,d\hat{F_t}Y,d\hat{F_t}Z,d\hat{F_t}W)=b\owedge\overline{b}(X,Y,Z,W),\\
&g_Q\owedge g_Q(d\hat{F_t}X,d\hat{F_t}Y,d\hat{F_t}Z,d\hat{F_t}W)=\hat{g}\owedge \hat{g}(X,Y,Z,W).
\end{align*}
Furthermore, since $(M,\hat{g})$ is a Lagrangian submanifold of $(Q_n,g_Q)$ we get
\begin{align*}
\omega\owedge\omega(d\hat{F_t}X,d\hat{F_t}Y,d\hat{F_t}Z,d\hat{F_t}W)=0.
\end{align*}
Combining these equalities we obtain
\begin{align*}
R(X,Y,Z,W)&=(\hat{g}\owedge \hat{g}+b\owedge\overline{b})(X,Y,Z,W)\\&\,\,\,\,\,\,+g_Q(\Pi(X,W),\Pi(Y,Z))-g_Q(\Pi(X,Z),\Pi(Y,W)).
\end{align*}
One can naturally assign to every $\hat{g}$-orthonormal basis $(e_i)_{i=1}^n$ of $TM$ a $g_Q$-orthonormal basis of $\nu(TM)$, namely $(J d\hat{F}_0e_i)_{i=1}^n.$
Hence we arrive at the identity 
\begin{align*}
R(X,Y,Z,W)&=(\hat{g}\owedge \hat{g}+b\owedge\overline{b})(X,Y,Z,W)\\&\,\,\,\,\,\,+\sum_{i=1}^n{g_Q(\Pi(X,W),J d\hat{F}_0e_i)g_Q(J d\hat{F}_0e_i,\Pi(Y,Z))}\\&\,\,\,\,\,\,-\sum_{i=1}^n{g_Q(\Pi(X,Z),J d\hat{F}_0e_i)g_Q(J d\hat{F}_0e_i,\Pi(Y,W))}.
\end{align*}
Using $g_Q(\Pi(X,Y),\xi)=\hat{g}(\hat{A}_{\xi}X,Y)$ for $\xi\in\nu(TM)$ and
Theorem\,\ref{alpha} we obtain the desired result.
\end{proof}

\begin{remark}
\label{ricci}
The Ricci equation of the Lagrangian submanifold $(M,g)\subset (Q_n,g_Q)$ is equivalent to the Gauss equation of $(M,g)\subset (Q_n,g_Q).$ 
\end{remark}

\subsection{The Weyl Identity}
In this subsection we first recall the classical Weyl Identities.
Afterwards we provide the invariant Weyl Identity.
Finally, we will explain the importance of the Weyl Identities.

\subsubsection{The classical Weyl Identities}
In \cite{karcher} Karcher deduced the so-called Weyl Identities, which he descirbes as \lq relations between the principal curvatures and the covariant derivatives of the shape operator derived by differentiating the Codazzi equations and combining with the Gauss equations.\rq\ 
These identities, which are henceforth referred to as the classical Weyl Identities, are stated in the following theorem.

\begin{theorem}[\cite{karcher}]
\label{clwe}
For all $i,j\in\left\{1,...,g\right\}$ with $i\neq j$ we have
\begin{align*}
(1+\lambda_i\lambda_j)\,g_0(v_i,v_i)\,g_0(v_j,v_j)=2g_0((\nabla^0_{v_i}A_0)v_j,(\lambda_i-A_0)^{-1}(\lambda_j-A_0)^{-1}(\nabla^0_{v_i}A_0)v_j),
\end{align*}
where $v_i\in D_i$, $v_j\in D_j$ and $\lambda_m=\lambda_m(0)$.
\end{theorem}

By polarizing the preceding identity twice and expressing the resulting equation in terms of $\alpha$
 we obtain the following corollary.

\begin{corollary}
\label{wij2}
For all $i,j\in\left\{1,...,g\right\}$ with $i\neq j$, the identity
\begin{align*}
(1+\lambda_i\lambda_j)\,g_0(v_i,\widetilde{v_i})\,g_0(v_j,\widetilde{v_j})=\tr_{g_0}\big(&\alpha(v_i,v_j,\cdot)\,\alpha(\widetilde{v_i},\widetilde{v_j},(\lambda_i-A_0)^{-1}(\lambda_j-A_0)^{-1}\,\cdot\,)\\+&
\alpha(\widetilde{v_i},v_j,\,\cdot\,)\,\alpha(v_i,\widetilde{v_j},(\lambda_i-A_0)^{-1}(\lambda_j-A_0)^{-1}\,\cdot\,)
\big)
\end{align*}
is equivalent to the Weyl Identity, where $v_i,\widetilde{v_i}\in D_i,$ $v_j,\widetilde{v_j}\in D_j$, $\lambda_m=\lambda_m(0)$ and $\tr_{g_0}$ denotes the sum over a $g_0$-orthonormal basis of
the orthogonal complement in $TM$ to $D_j\oplus D_i$.
\end{corollary}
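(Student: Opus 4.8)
The plan is to derive Corollary~\ref{wij2} from Proposition~\ref{clwe} by the stated polarization procedure. The Weyl identity as written is a quadratic expression in a single vector $v_i\in D_i$ and a single vector $v_j\in D_j$, appearing through the products $g_0(v_i,v_i)\,g_0(v_j,v_j)$ on the left and through the term $g_0((\nabla^0_{v_i}A_0)v_j,\,\cdots\,(\nabla^0_{v_i}A_0)v_j)$ on the right. First I would observe that the right-hand side is quadratic separately in $v_i$ (since $(\nabla^0_{v_i}A_0)$ depends linearly on $v_i$ and appears twice) and quadratic in $v_j$; likewise the left-hand side is quadratic in each. The goal of polarizing "twice" is therefore to pass from the diagonal quadratic form to its full bilinear form in each of the two slots, replacing $(v_i,v_i)$ by $(v_i,\widetilde{v_i})$ and $(v_j,v_j)$ by $(v_j,\widetilde{v_j})$.

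Concretely, I would first polarize in the $v_i$-slot: replace $v_i$ by $v_i+\widetilde{v_i}$ with $v_i,\widetilde{v_i}\in D_i$, expand, and extract the mixed (bilinear) term. On the left this turns $g_0(v_i,v_i)$ into $g_0(v_i,\widetilde{v_i})$. On the right, since $(\nabla^0_{v_i}A_0)v_j$ is linear in $v_i$, the quadratic-in-$v_i$ expression polarizes into the symmetrized product involving $(\nabla^0_{v_i}A_0)v_j$ and $(\nabla^0_{\widetilde{v_i}}A_0)v_j$, which accounts for the appearance of the two summands in the corollary. I would then polarize in the $v_j$-slot analogously, replacing $v_j$ by $v_j+\widetilde{v_j}$ and again extracting the bilinear term, producing $g_0(v_j,\widetilde{v_j})$ on the left and mixing $(\nabla^0 A_0)v_j$ with $(\nabla^0 A_0)\widetilde{v_j}$ on the right. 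Throughout, I would use symmetry of $\alpha^0$ (Lemma~\ref{van}) and the definition $\alpha(X,Y,Z)=g_0((\nabla^0_XA_0)Y,Z)$ to rewrite each occurrence of $g_0((\nabla^0_{\cdot}A_0)\cdot,\cdot)$ as a value of $\alpha$.

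The rewriting of the right-hand side into $\mbox{tr}^{'}_{g_0}$ form is the step that needs care. The inner product on the right of the Weyl identity, $g_0((\nabla^0_{v_i}A_0)v_j,\,(\lambda_i^0-A_0)^{-1}(\lambda_j^0-A_0)^{-1}(\nabla^0_{v_i}A_0)v_j)$, is an inner product of two tangent vectors, which I would re-express as a trace over a $g_0$-orthonormal basis by inserting the identity $\sum_e e\otimes e$. The crucial point is that $(\nabla^0_X A_0)Y$ lies in $TM\ominus(D_i\oplus D_j)$ whenever $X\in D_i$ or $Y\in D_j$: indeed by Lemma~\ref{van} the tensor $\alpha$ vanishes when two entries lie in the same distribution, so $\alpha(v_i,v_j,Z)=0$ for $Z\in D_i\cup D_j$, and the operators $(\lambda_i^0-A_0)^{-1}$, $(\lambda_j^0-A_0)^{-1}$ are exactly the ones that are well-defined and invertible on this complement. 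This is why the trace is taken over an orthonormal basis of $TM-D_j\oplus D_i$ rather than all of $TM$, and it is what makes the inverse factors meaningful.

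The main obstacle I expect is bookkeeping rather than conceptual: one must verify that the two polarizations, carried out in the correct order and symmetrized, produce precisely the two summands $\alpha(v_i,v_j,\cdot)\alpha(\widetilde{v_i},\widetilde{v_j},\cdots)$ and $\alpha(\widetilde{v_i},v_j,\cdot)\alpha(v_i,\widetilde{v_j},\cdots)$ with the stated placement of tildes, and that no spurious cross-terms survive. Because the original identity is diagonal in both slots simultaneously, polarizing in $v_i$ and then in $v_j$ generates several mixed terms, and I would need to confirm that the terms in which a tilde and a non-tilde vector land in the \emph{same} $\alpha$-factor combine correctly while the remaining pieces cancel or recombine by the symmetry of $\alpha$. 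The claim that the polarized identity is \emph{equivalent} to (not merely implied by) the original Weyl identity also requires checking that one can recover the diagonal case by setting $\widetilde{v_i}=v_i$ and $\widetilde{v_j}=v_j$, which is immediate once the bilinear form is established. Once these combinatorial checks are done, the result follows directly from Proposition~\ref{clwe}.
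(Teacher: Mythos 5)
Your proposal is correct and takes exactly the paper's approach: the paper offers no detailed proof, saying only that the corollary follows ``by polarizing the preceding identity twice and expressing the resulting equation in terms of $\alpha$,'' which is precisely the double polarization plus trace-rewriting you carry out (the cross-terms collapse to the two stated summands because $(\lambda_i^0-A_0)^{-1}(\lambda_j^0-A_0)^{-1}$ is $g_0$-symmetric under the trace). One small wording fix: $(\nabla^0_{v_i}A_0)v_j$ is orthogonal to $D_i\oplus D_j$ because $v_i\in D_i$ \emph{and} $v_j\in D_j$ --- each membership, via Lemma \ref{van}, kills one of the two distributions --- not ``or'' as written, though this is exactly how you use it.
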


\begin{remark}
The classical Weyl Identities depend on several indices.
Taking higher covariant derivatives of these identities would consequently lead to a plethora of different cases.
The importance of the higher covariant derivatives of the Weyl Identities is explained in Subsection \ref{weyl}.
\end{remark}

\subsubsection{Invariant Weyl Identity}
The classical Weyl Identities depend on several indices. 
In terms of the invariants introduced in Section \ref{sec2}, these multiple identities can be expressed as a single tensor identity, which we shall call the \emph{invariant Weyl Identity}. 

\smallskip

In this subsection we provide the invariant Weyl Identity.
As preparation we establish the following two lemmas.

\begin{lemma}
\label{ableitungb}
$\hat{g}((\nabla_{X}B_t)Y,Z)=-\frac{i}{2}\big(\alpha (X,B_tY,Z)+\alpha(X,Y,B_tZ)\big).$
\end{lemma}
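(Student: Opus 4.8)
The plan is to compute the covariant derivative $(\nabla_X B_t)Y$ directly from the definition $B_t = (A_t + i\eins)(A_t - i\eins)^{-1}$ and then pair against $Z$ with $\hat{g}$, converting everything into $\alpha^t$ via its definition $\alpha^t(X,Y,Z) = g_t((\nabla^t_X A_t)Y,Z)$. First I would observe that although $B_t$ is built from $A_t$ using the Levi-Civita connection $\nabla^t$ in the Codazzi theory, the statement uses the induced connection $\nabla$ on $(M,\hat{g})$; so I must be careful about which connection differentiates $B_t$. The cleanest route is to work with $\nabla$ throughout, writing $(\nabla_X B_t)Y = (\nabla_X B_t)Y$ as an endomorphism derivative, and use the product rule on the two factors $(A_t + i\eins)$ and $(A_t - i\eins)^{-1}$. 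Since $\eins$ is parallel, $\nabla_X(A_t \pm i\eins) = (\nabla_X A_t)$, and the derivative of the inverse factor contributes $-(A_t - i\eins)^{-1}(\nabla_X A_t)(A_t - i\eins)^{-1}$.

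Next I would assemble these pieces: by the product rule,
\begin{align*}
(\nabla_X B_t)Y = (\nabla_X A_t)(A_t - i\eins)^{-1}Y - (A_t + i\eins)(A_t - i\eins)^{-1}(\nabla_X A_t)(A_t - i\eins)^{-1}Y.
\end{align*}
The key algebraic simplification is to factor out $B_t = (A_t+i\eins)(A_t-i\eins)^{-1}$ and recognize the difference as a commutator-type expression. The plan is then to pair with $Z$ using $\hat{g}$, and invoke Theorem \ref{independent}, which gives $\hat{g}(X,Y) = g_t(X, \tfrac12(\eins + A_t^2)Y)$, together with the fact that $\tfrac12(\eins + A_t^2) = \tfrac12(A_t - i\eins)(A_t + i\eins)$ factors compatibly with $B_t$. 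The Codazzi symmetry of $\nabla^t A_t$ (Lemma \ref{van}) and the relation between $\nabla$ and $\nabla^t$ (Lemma \ref{nik2}) will be needed to replace any $\nabla_X A_t$ terms by the purely tensorial $\alpha^t$; crucially, the correction term $A_t(\eins + A_t^2)^{-1}(\nabla^t_X A_t)$ from Lemma \ref{nik2} should, after all the algebra, either cancel or fold into the $\alpha$-terms. I expect that after substituting $\hat{g}$ in terms of $g_t$ and using that $A_t$ is self-adjoint with respect to $g_t$, the two summands will reorganize into precisely $-\tfrac{i}{2}\{\alpha(X,B_tY,Z) + \alpha(X,Y,B_tZ)\}$, where the factor $-i/2$ emerges from the $\pm i\eins$ in $B_t$ combined with the $\tfrac12$ in the metric.

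The main obstacle I anticipate is the bookkeeping around the two connections. The tensor $\alpha^t$ is defined via $\nabla^t$, but $\nabla B_t$ uses $\nabla$; reconciling these via Lemma \ref{nik2} introduces an extra term $A_t(\eins+A_t^2)^{-1}(\nabla^t_X A_t)$ acting by composition, and I must verify this does not spoil the clean right-hand side. My expectation is that because $B_t$, $A_t$, $(\eins+A_t^2)^{-1}$ all commute (they are simultaneously diagonalized on the distributions $D_j$, being functions of $A_t$), the correction terms commute through and recombine; the eigenvalue identity $\mu_j^t = e^{2i(\theta_j - t)}$ from Lemma \ref{bthe} makes the spectral picture transparent. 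An alternative, perhaps cleaner, approach would be to prove the identity eigenspace-by-eigenspace: decompose $X,Y,Z$ into components in the distributions $D_j$, on each of which $B_t$ acts by the scalar $\mu_j^t$, reducing the tensor identity to a scalar relation among the $\mu_j^t$ and $\alpha$-components, which follows from the symmetry of $\alpha$ and the fact that $\alpha$ vanishes when two entries lie in the same $D_j$ (Lemma \ref{van}). I would carry out the direct product-rule computation first and fall back on the spectral decomposition to verify the coefficient $-i/2$ and to handle the connection discrepancy if it proves stubborn.
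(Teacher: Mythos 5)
Your primary computation is essentially the paper's own proof: the paper likewise expands the derivative of $B$ by the product rule (it first reduces to $t=0$ via $B_t=e^{-2it}B_0$ from Lemma \ref{bthe}, a cosmetic difference), rewrites $\hat{g}$ in terms of $g_0$ using Theorem \ref{independent}, imports $\nabla^0$ through Lemma \ref{nik2}, and then collapses everything into $\alpha$-terms via the identities $\eins-A_0^2(\eins+A_0^2)^{-1}=(\eins+A_0^2)^{-1}$, $(\eins+A_0^2)^{-1}(A_0+i\eins)=(A_0-i\eins)^{-1}$, $\tfrac{1}{2}(B_0-\eins)=i(A_0-i\eins)^{-1}$ and $\tfrac{1}{2}(B_0+\eins)=A_0(A_0-i\eins)^{-1}$. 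One warning about your stated reason that the Lemma \ref{nik2} correction term is harmless: commutativity of $B_t$, $A_t$, $(\eins+A_t^2)^{-1}$ is not by itself sufficient, because the correction term has $\nabla^t_XA_t$ sandwiched between functions of $A_t$, and $\nabla^t_XA_t$ does not commute with them (indeed, by Lemma \ref{van} it maps each $D_j$ into its orthogonal complement); the cancellation really is produced by the resolvent identities above, i.e.\ it must be computed, not argued by commutation. Your spectral fallback, on the other hand, is a valid and genuinely different route, and it pins down the constant $-i/2$ cleanly: for $Y\in\Gamma(D_b)$, $Z\in\Gamma(D_c)$ with $b\neq c$, constancy of $\mu_b^t$ gives $(\nabla_XB_t)Y=(\mu_b^t\eins-B_t)\nabla_XY$, so the left-hand side equals $(\mu_b^t-\mu_c^t)\,\hat{g}(\nabla_XY,Z)$; combining Lemma \ref{nik2}, Theorem \ref{independent} and $\pi_c\nabla^t_XY=(\lambda_b^t-\lambda_c^t)^{-1}\pi_c(\nabla^t_XA_t)Y$ yields $\hat{g}(\nabla_XY,Z)=\tfrac{1+\lambda_b^t\lambda_c^t}{2(\lambda_b^t-\lambda_c^t)}\,\alpha(X,Y,Z)$, and the lemma reduces to the scalar identity $\tfrac{1+\lambda_b^t\lambda_c^t}{\lambda_b^t-\lambda_c^t}=-i\,\tfrac{\mu_b^t+\mu_c^t}{\mu_b^t-\mu_c^t}$, both sides being equal to $-\cot(\theta_b-\theta_c)$, while for $b=c$ both sides vanish by Lemma \ref{van}. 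Note, however, that this reduction requires Lemma \ref{nik2} and Theorem \ref{independent} just as much as the direct route does; it does not follow from the symmetry and vanishing properties of $\alpha$ alone, as the last part of your plan suggests.
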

\begin{proof}
Due to the last identity of Lemma\,\ref{bthe} it is sufficient to prove the claim for $t=0$.
By definition of $B_0$ we obtain
\begin{align*}
(\nabla_{X}B_0)Y=-2i(A_0-i\eins)^{-1}(\nabla_{X}A_0)(A_0-i\eins)^{-1}Y.
\end{align*}
Furthermore, using Lemma\,\ref{nik2} we get
\begin{align*}
(\nabla_{X}A_0)Y=(\nabla^0_{X}A_0)Y+A_0(\eins+A_0^2)^{-1}((\nabla^0_{X}A_0)A_0Y-A_0(\nabla^0_{X}A_0)Y).
\end{align*}
Consequently, we obtain
\begin{align*}
\hat{g}((\nabla_{X}B_0)Y,Z)=&-ig_0((A_0+i\eins)(\nabla_{X}A_0)(A_0-i\eins)^{-1}Y,Z)\\
=&-ig_0((\nabla^0_{X}A_0)(A_0-i\eins)^{-1}Y,(A_0+i\eins)Z)\\
&-ig_0(\big(A_0(\eins+A_0^2)^{-1}(\nabla^0_{X}A_0)(A_0(A_0-i\eins)^{-1}Y)\big),(A_0+i\eins)Z)\\
&+ig_0(\big(A_0(\eins+A_0^2)^{-1}A_0(\nabla^0_{X}A_0)(A_0-i\eins)^{-1}Y\big),(A_0+i\eins)Z).
\end{align*}
Expressed in terms of $\alpha$, this equation reads
\begin{align*}
\hat{g}((\nabla_{X}B_0)Y,Z)=&-i\alpha(X,(A_0-i\eins)^{-1}Y,(A_0+i\eins)Z)\\
&-i\alpha(X,A_0(A_0-i\eins)^{-1}Y,A_0(A_0-i\eins)^{-1}Z),
\end{align*}
where we make use of
\begin{align*}
\eins-A_0^2(\eins+A_0^2)^{-1}=(\eins+A_0^2)^{-1}\,\,\,\mbox{and}\,\,\,(\eins+A_0^2)^{-1}(A_0+i\eins)=(A_0-i\eins)^{-1}.
\end{align*}
By definition of $B_0$ we get
\begin{align*}
\tfrac{1}{2}(B_0-\eins)=i(A_0-i\eins)^{-1}\,\,\,\mbox{and}\,\,\,\tfrac{1}{2}(B_0+\eins)=A_0(A_0-i\eins)^{-1}.
\end{align*}
Hence we find 
\begin{align*}
\hat{g}((\nabla_{X}B_0)Y,Z)=-\tfrac{i}{2}\big(\alpha(X,B_0Y,Z)+\alpha(X,Y,B_0Z)\big).
\end{align*}
Using $B_t=e^{-2it}B_0$ the claim is thus established.
\end{proof}

\begin{lemma}
\label{pwe}
We have the identity
\begin{align*}
2T(\pi_kX,\pi_kY,\pi_jZ,\pi_jW)=&-\alpha(\pi_kY,(\nabla_{\lower 2pt\hbox{$\scriptstyle\pi_k X$}}\pi_j)Z,\pi_jW)-\alpha(\pi_kY,\pi_jZ,(\nabla_{\lower 2pt\hbox{$\scriptstyle\pi_k X$}}\pi_j)W)\\
&+\alpha(\pi_jW,(\nabla_{\lower 2pt\hbox{$\scriptstyle\pi_j Z$}}\pi_k)X,\pi_kY)+\alpha(\pi_jW,\pi_kX,(\nabla_{\lower 2pt\hbox{$\scriptstyle\pi_j Z$}}\pi_k)Y).
\end{align*}
\end{lemma}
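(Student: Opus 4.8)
The plan is to read the identity off the Codazzi equation of Proposition~\ref{cod}, after substituting projected vector fields and then expanding the two resulting covariant derivatives by means of the vanishing property of $\alpha$ recorded in Lemma~\ref{van}. Throughout I use that $B_0$ acts on $D_k$ by the scalar $\mu_k^0=e^{2i\theta_k}$ (see the proof of Lemma~\ref{bthe}) and that $\alpha$ is symmetric.

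First I would apply the Codazzi equation $(\nabla_{X}\alpha)(Y,Z,W)-(\nabla_{Y}\alpha)(X,Z,W)=2[T(X,Z,Y,W)+T(X,W,Y,Z)]$ with the substitution $X\mapsto\pi_kX$, $Y\mapsto\pi_jZ$, $Z\mapsto\pi_kY$, $W\mapsto\pi_jW$. The first term on the right then becomes the desired $2T(\pi_kX,\pi_kY,\pi_jZ,\pi_jW)$, while the second is $2T(\pi_kX,\pi_jW,\pi_jZ,\pi_kY)=2\Im(\hat g(B_0\pi_kX,\pi_jW)\,\hat g(B_0^{-1}\pi_jZ,\pi_kY))$. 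Since $\hat g(B_0\pi_kX,\pi_jW)=\mu_k^0\,\hat g(\pi_kX,\pi_jW)$, this vanishes by orthogonality of the distributions when $k\neq j$, and when $k=j$ the prefactor $\mu_k^0(\mu_j^0)^{-1}=1$ is real so the imaginary part is again zero. Hence in all cases $2T(\pi_kX,\pi_kY,\pi_jZ,\pi_jW)=(\nabla_{\pi_kX}\alpha)(\pi_jZ,\pi_kY,\pi_jW)-(\nabla_{\pi_jZ}\alpha)(\pi_kX,\pi_kY,\pi_jW)$.

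The core of the argument is to expand these two covariant derivatives. For the first, the scalar $\alpha(\pi_jZ,\pi_kY,\pi_jW)$ is identically zero by Lemma~\ref{van}, since two of its entries lie in $D_j$ and $\alpha$ is symmetric; thus the directional-derivative term of the tensorial expansion drops out and only the Leibniz terms survive. Writing $\nabla_{\pi_kX}(\pi_jZ)=(\nabla_{\pi_kX}\pi_j)Z+\pi_j(\nabla_{\pi_kX}Z)$, and likewise for $\pi_jW$, each $\pi_j(\nabla_{\pi_kX}\,\cdot\,)$-piece again places two entries of $\alpha$ into $D_j$ and hence vanishes; the term in which $\nabla_{\pi_kX}$ differentiates $\pi_kY$ vanishes as well, because its two outer arguments $\pi_jZ,\pi_jW$ already lie in $D_j$. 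What remains, after invoking symmetry of $\alpha$, is exactly $-\alpha(\pi_kY,(\nabla_{\pi_kX}\pi_j)Z,\pi_jW)-\alpha(\pi_kY,\pi_jZ,(\nabla_{\pi_kX}\pi_j)W)$. The second covariant derivative is treated identically with the roles of $D_k$ and $D_j$ interchanged: the scalar $\alpha(\pi_kX,\pi_kY,\pi_jW)$ vanishes, the $\pi_k(\nabla_{\pi_jZ}\,\cdot\,)$-pieces and the term differentiating $\pi_jW$ vanish, and one is left with $+\alpha(\pi_jW,(\nabla_{\pi_jZ}\pi_k)X,\pi_kY)+\alpha(\pi_jW,\pi_kX,(\nabla_{\pi_jZ}\pi_k)Y)$.

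Assembling the two expansions yields precisely the claimed identity; the case $k=j$ is consistent because both sides then vanish identically by the same distribution-vanishing of $\alpha$. I expect the only genuine obstacle to be the bookkeeping in the third step: one must split each covariant derivative of a projected field via the Leibniz rule for the $(1,1)$-tensor $\pi$, and then check, term by term, which pair of arguments of $\alpha$ is forced into a common distribution. Everything else is formal, relying solely on the tensoriality of $\nabla\alpha$, the symmetry of $\alpha$, and the eigenvalue action of $B_0$ on the curvature distributions.
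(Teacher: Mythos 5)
Your proposal is correct and takes essentially the same route as the paper: the paper first differentiates the vanishing identity $\alpha(Y,\pi_jZ,\pi_jW)=0$ to obtain the two Leibniz expansions and then invokes the Codazzi equation of Proposition~\ref{cod}, whereas you apply Codazzi first and then perform the identical expansions, so the logical content coincides. A minor bonus of your write-up is that you explicitly verify the vanishing of the second curvature term $T(\pi_kX,\pi_jW,\pi_jZ,\pi_kY)$ (via the eigenvalue action of $B_0$ and $\hat g$-orthogonality of the distributions), a step the paper uses implicitly.
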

\begin{proof}
 Differentiating the equation $\alpha(Y,\pi_jZ,\pi_jW)=0$ we get
\begin{align*}
(\nabla_{\lower 2pt\hbox{$\scriptstyle X$}}\alpha)(Y,\pi_jZ,\pi_jW)+\alpha(Y,(\nabla_{\lower 2pt\hbox{$\scriptstyle X$}}\pi_j)Z,\pi_jW)+\alpha(Y,\pi_jZ,(\nabla_{\lower 2pt\hbox{$\scriptstyle X$}}\pi_j)W)=0.
\end{align*}
Consequently we obtain
\begin{multline*}
(\nabla_{\lower 2pt\hbox{$\scriptstyle\pi_k X$}}\alpha)(\pi_k Y,\pi_jZ,\pi_jW)=-\big(\alpha(\pi_kY,(\nabla_{\lower 2pt\hbox{$\scriptstyle\pi_k X$}}\pi_j)Z,\pi_jW)+\alpha(\pi_kY,\pi_jZ,(\nabla_{\lower 2pt\hbox{$\scriptstyle\pi_k X$}}\pi_j)W)\big).
\end{multline*}
Changing the roles of the pairs $(\pi_k X,\pi_k Y)$ and $(\pi_j Z,\pi_j W)$ we get
\begin{multline*}
(\nabla_{\lower 2pt\hbox{$\scriptstyle\pi_j Z$}}\alpha)(\pi_j W,\pi_kX,\pi_kY)=-\big(\alpha(\pi_jW,(\nabla_{\lower 2pt\hbox{$\scriptstyle\pi_j Z$}}\pi_k)X,\pi_kY)+\alpha(\pi_jW,\pi_kX,(\nabla_{\lower 2pt\hbox{$\scriptstyle\pi_j Z$}}\pi_k)Y)\big).
\end{multline*}
Taking the difference of the two preceding identities
the Codazzi equation from Proposition\,\ref{cod} completes the proof.
\end{proof}

In the next theorem we finally provide the invariant Weyl Identity.

\begin{theorem}
\label{weylt}
We have the identity
\begin{align*}
-4i&g^2\,T(X,Y,Z,W)
\\=\sum_{\ell,\,j=0}^{g-1}&tr_{\hat{g}}\Big(
\alpha(B_0^{-\ell}\,Y,B_0^{-j}\,W,\,\cdot\,)\,\sum_{k=0}^{j-1}\big( \alpha(B_0^{\ell}\,X,B_0^{k+1}\,Z,B_0^{j-k-1}\,\cdot\,)+\alpha(B_0^{\ell}\,X,B_0^{k}\,Z,B_0^{j-k}\,\cdot\,)\big)\\
&+\alpha(B_0^{-\ell}\,Y,B_0^{-j}\,Z,\,\cdot\,)\,\sum_{k=0}^{j-1}\big( \alpha(B_0^{\ell}\,X,B_0^{k+1}\,W,B_0^{j-k-1}\,\cdot\,)+\alpha(B_0^{\ell}\,X,B_0^{k}\,W,B_0^{j-k}\,\cdot\,)\big)\\
&-\alpha(B_0^{-j}\,W,B_0^{-\ell}\,Y,\,\cdot\,)\,\sum_{k=0}^{\ell-1}\big( \alpha(B_0^j\,Z,B_0^{k+1}\,X,B_0^{\ell-k-1}\,\cdot\,)+\alpha(B_0^j\,Z,B_0^{k}\,X,B_0^{\ell-k}\,\cdot\,)\big)\\
&-\alpha(B_0^{-j}\,W,B_0^{-\ell}\,X,\,\cdot\,)\,\sum_{k=0}^{\ell-1}\big( \alpha(B_0^j\,Z,B_0^{k+1}\,Y,B_0^{\ell-k-1}\,\cdot\,)+\alpha(B_0^j\,Z,B_0^{k}\,Y,B_0^{\ell-k}\,\cdot\,)\big)\Big).
\end{align*}
\end{theorem}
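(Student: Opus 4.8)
The plan is to sum the pointwise identity of Lemma~\ref{pwe} over all pairs $(j,k)\in\{1,\dots,g\}^{2}$ and to eliminate the curvature indices by writing every projector through powers of $B_0$. By Lemma~\ref{bthe} one has $B_{\theta_j}=e^{-2i\theta_j}B_0$, so Lemma~\ref{pro} becomes $\pi_j=\tfrac1g\sum_{b=0}^{g-1}e^{-2ib\theta_j}B_0^{b}$, and $D_k$ is precisely the $e^{2i\theta_k}$-eigenspace of $B_0$, whence $\sum_k e^{2i\theta_k}\pi_k=B_0$. Since $A_0$ is self-adjoint for $\hat g$ (Theorem~\ref{independent}) and $B_0$ is a rational function of $A_0$, the operator $B_0$ is self-adjoint for the complex-bilinear extension of $\hat g$; together with the symmetry of $\alpha$ (Lemma~\ref{van}) this lets me shift powers of $B_0$ freely between the arguments of $\alpha$ and the trace slot. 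On the left-hand side the eigenvalue relation gives $\sum_k\hat g(B_0\pi_kX,\pi_kY)=\hat g(B_0X,Y)$ and likewise for the $B_0^{-1}$-factor, so that $\sum_{j,k}2T(\pi_kX,\pi_kY,\pi_jZ,\pi_jW)=2T(X,Y,Z,W)$.

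On the right-hand side I would substitute the projector formula into each of the four terms of Lemma~\ref{pwe}, expand the covariant derivative by the Leibniz rule $\nabla_V(B_0^{b})=\sum_{k=0}^{b-1}B_0^{k}(\nabla_VB_0)B_0^{b-1-k}$, and replace every factor $\nabla_VB_0$ using Lemma~\ref{ableitungb}. This last step is what turns a single $\alpha$ carrying $\nabla B_0$ into a trace of a product of two $\alpha$'s: expanding $(\nabla_VB_0)P$ in an $\hat g$-orthonormal basis and applying Lemma~\ref{ableitungb} yields $\alpha(R,(\nabla_VB_0)P,S)=-\frac i2\operatorname{tr}_{\hat g}\{[\alpha(V,B_0P,\cdot)+\alpha(V,P,B_0\cdot)]\,\alpha(R,\cdot,S)\}$, where the symmetry of $\alpha$ is used to place the free slot last. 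The two summands here become the two terms of each inner sum $\sum_{k=0}^{j-1}$ (respectively $\sum_{k=0}^{\ell-1}$), while the Leibniz index supplies its range; the four terms of Lemma~\ref{pwe} then match, in order, the four lines of the asserted identity.

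The elimination of the angles $\theta_j,\theta_k$ rests on the fact that $\sum_{k=1}^{g}e^{2im\theta_k}$ vanishes unless $g\mid m$ (a geometric series in the $g$-th roots of unity): summing over $k$ forces the two $B_0$-powers produced by the pair $\pi_kY,\pi_kX$ to be opposite, so that $\pi_kY\rightsquigarrow B_0^{-\ell}Y$ is paired with $\pi_kX\rightsquigarrow B_0^{\ell}X$, and similarly $\pi_jW\rightsquigarrow B_0^{-j}W$ with $\pi_jZ\rightsquigarrow B_0^{j}Z$, the residual signs being absorbed by the relations $B_0^{g}=-\eins$ and $B_0^{-1}=-B_0^{g-1}$ of Lemma~\ref{bthe}. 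Each of the two summations contributes a factor $g$ that cancels one of the four normalizations $\tfrac1g$, leaving an overall $\tfrac1{g^{2}}$; combining this with the constant $-\frac i2$ of Lemma~\ref{ableitungb} (which, against the signs of Lemma~\ref{pwe}, produces the $\pm\frac i2$ matching the line signs) and the factor $2$ on the left, the relation $2T=\frac1{g^2}\cdot\frac i2\,S$ gives the stated prefactor $-4ig^{2}$. I expect the only real difficulty to be organizational: keeping the power indices and phases in step so that the orthogonality relation fires exactly and the telescoping Leibniz sums assemble into the displayed inner sums, while consistently reducing powers of $B_0$ modulo $B_0^{g}=-\eins$. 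No new geometric ingredient beyond Lemmas~\ref{bthe},~\ref{pro},~\ref{ableitungb} and the Codazzi equation already contained in Lemma~\ref{pwe} is required.
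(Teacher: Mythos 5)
Your proposal is correct and takes essentially the same route as the paper: its proof likewise sums Lemma~\ref{pwe} over both indices, converts the projectors into powers of $B_0$ via the identity $\sum_{\gamma=1}^{g}\pi_{\gamma}\otimes\pi_{\gamma}=\frac{1}{g}\sum_{\gamma=1}^{g}B_0^{\gamma}\otimes B_0^{-\gamma}$ (which is exactly your roots-of-unity orthogonality argument, with the signs absorbed by $B_0^{g}=-\eins$), and then finishes with Lemma~\ref{ableitungb}. The Leibniz expansion of $\nabla_V(B_0^{b})$, the trace-slot shifting by self-adjointness of $B_0$, and the constant bookkeeping leading to $-4ig^{2}$ are precisely the steps the paper leaves implicit, and you carry them out correctly.
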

\begin{proof}
Take the sum from $1$ to $g$ over $j$ and $k$ of the identities just proved in Lemma\,\ref{pwe},
and use the identity
$$\sum_{\gamma=1}^g\pi_{\gamma}\otimes\pi_{\gamma}=\tfrac{1}{g}\sum_{\gamma=1}^gB_0^{\gamma}\otimes B_0^{-\gamma}.$$
The claim then follows from Lemma\,\ref{ableitungb}.
\end{proof}

In terms of the better invariants $(\hat{g},\alpha,B\otimes B^{-1})$, the classical Weyl Identities thus condense into one structural tensor identity. 
This, in particular, makes it feasible to consider higher derivatives of the Weyl Identity.
In the classical approaches this is not possible without considering a plethora of different cases.

\subsubsection{The importance of the Weyl Identity}
\label{weyl}
In this subsection we explain the importance of the Weyl Identity.
First we relate the Weyl Identity to several well-known identities, e.g. the Cartan identity.
Afterwards we explain why an invariant formulation of the Weyl Identities is important.

\smallskip

Although in most parts of the literature the Weyl Identity does not occur explicitly, 
it however plays a decisive role in all papers concerned with the classification of isoparametric hypersurfaces in spheres. 
Karcher was the first to prove the classical Weyl Identities, in fact, \cite{karcher} is the only source mentioning them explicitly.
Karcher showed that for $g=3$ the Weyl Identities turn each curvature distribution $D_j$ into a normed algebra and thus reproved the results of Cartan in a structural way.
 
 \smallskip

 {\bf Relation to the Cartan Identity.} Our first observation is that the Weyl Identities imply the Cartan identity. Before proving this, we
 recall the Cartan identity
\begin{align*}
\sum_{\substack{j=1 \\ j\ne i}}^g m_j\,\frac{1+\lambda_i\lambda_j}{\lambda_i-\lambda_j}=0,\hspace{1cm}i\in\left\{1,...,g\right\},
\end{align*}
where we make use of the short hand notation $\lambda_i=\lambda_i(0)$.
This identity is crucial in Cartan's work on isoparametric hypersurfaces is space forms \cite{cartan1}-\cite{cartan4}.
Using this identity, Cartan classified isoparametric hypersurfaces of Euclidean spaces and hyperbolic spaces.
Cartan in particular proved that for these cases the number $g$ of distinct principal curvatures is at most two.
However, for the case where the ambient space is a sphere, this identity does not provide such strong restrictions on $g$.

\smallskip

Nomizu \cite{nomizu} proved that the Cartan identity is equivalent to the minimality of the focal submanifolds.
Indeed, by (\ref{specfoc}) we obtain
\begin{align*}
\tr(A_{\theta_i\vert \nu_p})=\sum_{\substack{j=1 \\ j\ne i}}^gm_j\cot(\theta_j-\theta_i)=\sum_{\substack{j=1 \\ j\ne i}}^gm_j\frac{1+\lambda_i\lambda_j}{\lambda_i-\lambda_j}.
\end{align*}

We now prove that the Weyl Identities imply the Cartan identity.

\begin{lemma}
\label{weylcartan}
The Weyl Identities imply the Cartan identity.
\end{lemma}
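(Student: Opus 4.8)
The plan is to start from the classical Weyl identity of Proposition \ref{clwe} and deduce the Cartan identity by taking an appropriate trace over the curvature distributions. Fix $i\in\{1,\dots,g\}$ and a vector $v_i\in D_i$ with $g_0(v_i,v_i)=1$. For each $j\neq i$ choose a $g_0$-orthonormal basis of $D_j$ and sum the Weyl identity over that basis and over all $j\neq i$. On the left-hand side this produces
\begin{align*}
\sum_{\substack{j=1\\ j\ne i}}^g m_j\,(1+\lambda_i^0\lambda_j^0)\,g_0(v_i,v_i),
\end{align*}
since $\dim D_j=m_j$. The key observation is that after dividing by a suitable factor the right-hand side will collapse to $\sum_{j\ne i}m_j(\lambda_i^0-\lambda_j^0)$, and comparing the two sides will yield exactly the Cartan identity $\sum_{j\ne i}m_j\frac{1+\lambda_i^0\lambda_j^0}{\lambda_i^0-\lambda_j^0}=0$ once one uses the symmetry and trace-freeness of $\alpha$ recorded in Lemma \ref{van}.

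More precisely, I would rewrite the right-hand side of Proposition \ref{clwe} using $\alpha$ via $g_0((\nabla^0_{v_i}A_0)v_j,W)=\alpha^0(v_i,v_j,W)$ and expand the double sum as a trace of $\alpha$ against itself weighted by $(\lambda_i^0-A_0)^{-1}(\lambda_j^0-A_0)^{-1}$. Summing over an orthonormal basis of $D_j$ turns one factor of $\alpha$ into a contraction; the crucial step is to recognize that when summed over all $j\ne i$ and over the bases of the $D_j$, the weights $(\lambda_j^0-A_0)^{-1}$ recombine so that the quadratic-in-$\alpha$ terms can be packaged into the trace of a single endomorphism built from $\nabla^0 A_0$. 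The trace-free property of $\alpha$ (i.e.\ $\sum_k\alpha(e_k,e_k,\cdot)=0$ for an orthonormal basis, which follows from the Codazzi symmetry together with the vanishing of $\alpha$ on $D_j\times D_j\times TM$) is what allows the apparently complicated right-hand side to reduce to the linear expression $\sum_{j\ne i}m_j(\lambda_i^0-\lambda_j^0)$ after the weights are cleared.

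I expect the main obstacle to be the bookkeeping in the contraction step: one must carefully track how the operator $(\lambda_i^0-A_0)^{-1}(\lambda_j^0-A_0)^{-1}$ acts on the complementary distributions $D_k$ with $k\ne i,j$ and verify that the cross terms (those involving $\alpha$ evaluated on three distinct distributions) either cancel in pairs or assemble into the telescoping sum that produces $\frac{1}{\lambda_i^0-\lambda_j^0}$. Rather than grinding through this directly, the cleaner route is to use the spectral decomposition $A_0=\sum_k\lambda_k^0\pi_k$ so that $(\lambda_i^0-A_0)^{-1}=\sum_{k\ne i}(\lambda_i^0-\lambda_k^0)^{-1}\pi_k$, and then the whole right-hand side becomes a finite sum over triples of indices with explicit rational coefficients in the $\lambda^0$. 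At that point the identity reduces to an algebraic relation among the $\lambda_k^0=\cot(\phi+(k-1)\pi/g)$ and the multiplicities, and the partial-fraction identity $\frac{1+\lambda_i^0\lambda_j^0}{(\lambda_i^0-\lambda_k^0)(\lambda_j^0-\lambda_k^0)}$ summed over $k$ is what ultimately forces the Cartan identity. Alternatively, and perhaps more transparently, one can bypass the trace computation entirely by invoking the interpretation of the Cartan identity as minimality of the focal submanifolds recorded just above the lemma: the Weyl identity constrains the second fundamental form of the focal manifold in the normal direction, and tracing that constraint directly gives $\tr(A_{\theta_i\vert\nu_p})=0$, which is the Cartan identity by formula (\ref{specfoc}).
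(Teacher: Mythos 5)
Your starting point (the polarized Weyl identity expressed through $\alpha$ and summed over frames adapted to the curvature distributions) is the same as the paper's, but the core mechanism of your argument is missing, and the gap sits exactly at the step you label as ``the key observation''. Summing the Weyl identity over orthonormal bases of the $D_j$ does not make the right-hand side collapse to $\sum_{j\ne i}m_j(\lambda_i^0-\lambda_j^0)$; nothing forces such a collapse, and the trace-freeness of $\alpha$ from Lemma \ref{van} is not the relevant tool here. Moreover, even if both of your claimed reductions held, equating $\sum_{j\ne i}m_j(1+\lambda_i^0\lambda_j^0)$ with $\sum_{j\ne i}m_j(\lambda_i^0-\lambda_j^0)$ would still not be the Cartan identity: that identity asserts the vanishing of the sum of the \emph{ratios} $m_j(1+\lambda_i^0\lambda_j^0)/(\lambda_i^0-\lambda_j^0)$, and you cannot reconstruct a sum of ratios from the two sides summed separately.

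What the paper actually does is the following. Write the Weyl identity for a frame of eigenvector fields as $1+\lambda_i^0\lambda_j^0=2\sum_{k}\alpha(f_k,f_i,f_j)^2/\big((\lambda_i^0-\lambda_k^0)(\lambda_j^0-\lambda_k^0)\big)$, the sum running over frame vectors $f_k$ with $\lambda_k^0\ne\lambda_i^0,\lambda_j^0$. Then divide by $\lambda_i^0-\lambda_j^0$ \emph{before} summing over $j$, so that the left-hand side becomes exactly the Cartan sum $S=\sum_{\lambda_j^0\ne\lambda_i^0}\frac{1+\lambda_i^0\lambda_j^0}{\lambda_i^0-\lambda_j^0}$, while the right-hand side becomes the double sum $2\sum^{'}_{j,k}\frac{\alpha(f_i,f_j,f_k)^2}{(\lambda_i^0-\lambda_j^0)(\lambda_j^0-\lambda_k^0)(\lambda_i^0-\lambda_k^0)}$ over pairs with all three eigenvalues distinct. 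The numerator is symmetric under $j\leftrightarrow k$ while the denominator changes sign, so resumming with $j$ and $k$ exchanged shows this double sum also equals $-S$; hence $S=-S$ and $S=0$. This antisymmetry of the rational weight is the single idea that makes the proof work, and it is absent from your proposal. Finally, your fallback route --- invoking minimality of the focal submanifolds --- is circular in this context: as the paper notes just before the lemma, the Cartan identity \emph{is} the statement $\operatorname{trace}(A_{\theta_i\vert\nu_p})=0$, i.e.\ minimality; asserting that the Weyl identity ``constrains the second fundamental form of the focal manifold'' and that tracing it gives this vanishing is a restatement of the lemma, not a proof of it.
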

\begin{proof}
Denote by $(f_k)_{k=1}^n$ an $g_0$-orthonormal frame of $TM$ which consists of eigenvector fields of $A_0$.
Choosing $v_i=\widetilde{v_i}=f_i$ and $v_j=\widetilde{v_j}=f_j$ in Corollary \ref{wij2}, we get
\begin{align*}
1+\lambda_i\lambda_j=2\sum_{k=1,\lambda_k\neq\lambda_j,\lambda_i}^n\frac{\alpha(f_k,f_i,f_j)^2}{(\lambda_i-\lambda_k)(\lambda_j-\lambda_k)}.
\end{align*}
Hence we obtain
\begin{align*}
\sum_{j=1,\lambda_j\neq\lambda_i}^n\frac{1+\lambda_i\lambda_j}{\lambda_i-\lambda_j}&=2\,\hat{\sum}_{k,j}\frac{\alpha(f_i,f_j,f_k)^2}{(\lambda_i-\lambda_j)(\lambda_j-\lambda_k)(\lambda_i-\lambda_k)}\\&=-
\sum_{k=1,\lambda_k\neq\lambda_i}^n\frac{1+\lambda_i\lambda_k}{\lambda_i-\lambda_k}
\end{align*}
where we denote by $\hat{\sum}_{k,j}$ the sum over those $j,k\in\left\{1,...,g\right\}$ with $\lambda_k\neq\lambda_j\neq\lambda_i\neq\lambda_k.$
Consequently, we get
\begin{align*}
\sum_{j=1, j\neq i}^gm_j\frac{1+\lambda_i\lambda_j}{\lambda_i-\lambda_j}=
\sum_{j=1,\lambda_j\neq\lambda_i}^n\frac{1+\lambda_i\lambda_j}{\lambda_i-\lambda_j}=0,
\end{align*}
i.e. the Cartan identity.
\auxqed
\end{proof}
Clearly, the Cartan identity is weaker than the Weyl Identity.

\smallskip

{\bf Relation to the isospectral families $L(s)$.}
The classification of the isospectral families $L(s)$ at one focal manifold is the central step for the classification of isoparametric hypersurfaces in spheres with $(g,m)=(6,1)$ - also see Subsection \ref{g=6}.
Next we show that the Weyl Identities encode the isospectrality of $L(s)$.

\smallskip

Let $\overline{p}\in M_j$.
It is well-known, see e.g. \cite{mi2}, that $T_{\overline{p}}M_j$ maybe identified with $\oplus_{i\neq j}D_i(q)$ for any $q\in F_{\theta_j}^{-1}(\overline{p})$.
Consequently, the normal space $\nu_{\overline{p}}M_j$ of $M_j$ at $\overline{p}$ is spanned by $\nu_{\theta_j}(p)$ and a basis of $D_j(p)$.
Recall that for each choice of pairs of orthogonal vectors $\nu_1,\nu_2\in\nu_{\overline{p}}M_j$  one gets a isospectral family $L(s)=\cos(s)A_{\nu_1}+\sin(s)A_{\nu_2}$.
We observe that the condition that $L(s)$ is isospectral partially encodes the Weyl Identity and higher covariant derivatives thereof.
In the following theorem we shall make this statement more precise in the case $(g,m)=(6,1)$ only.

\begin{theorem}
\label{l1}
Let $(g,m)=(6,1)$ and $e_i\in D_i$ be unit vector fields.
Furthermore, let $\overline{p}\in M_6$ and $p\in F_{\theta_j}^{-1}(\overline{p})$.
Denote by $L_0$ and $L_1$ the shape operator of $M_6$ at $\overline{p}\in M_6$ with respect to $\nu_{\theta_6}(p)$ and $e_6(p)$, respectively.
The isospectrality of  $L(s)=\cos(s)L_0+\sin(s)L_1$ is equivalent to the classical Weyl Identity with $(i,j)=(3,6)$ and the first four covariant derivatives with respect to $e_6\in D_6$ thereof.
\end{theorem}
\begin{proof}
We only give a sketch of the proof.
First we verify
 $L_0=\mbox{Diag}(\sqrt{3},\frac{1}{\sqrt{3}},0,-\frac{1}{\sqrt{3}},-\sqrt{3})$ and
\begin{align}
\label{l1inalpha}
L_1=\left(\begin{array}{ccccc}
0&\sqrt{\frac{2}{3}}\,\alpha_{1\,2\,6}&\frac{1}{\sqrt{2}}\,\alpha_{1\,3\,6}&\sqrt{\frac{2}{3}}\,\alpha_{1\,4\,6}&\sqrt{2}\,\alpha_{1\,5\,6}\\
\sqrt{\frac{2}{3}}\,\alpha_{1\,2\,6}&0&\frac{1}{\sqrt{6}}\,\alpha_{2\,3\,6}&\frac{\sqrt{2}}{3}\,\alpha_{2\,4\,6}&\sqrt{\frac{2}{3}}\,\alpha_{2\,5\,6}\\
\frac{1}{\sqrt{2}}\,\alpha_{1\,3\,6}&\frac{1}{\sqrt{6}}\,\alpha_{2\,3\,6}&0&\frac{1}{\sqrt{6}}\,\alpha_{3\,4\,6}&\frac{1}{\sqrt{2}}\,\alpha_{3\,5\,6}\\
\sqrt{\frac{2}{3}}\,\alpha_{1\,4\,6}&\frac{\sqrt{2}}{3}\,\alpha_{2\,4\,6}&\frac{1}{\sqrt{6}}\,\alpha_{3\,4\,6}&0&\sqrt{\frac{2}{3}}\,\alpha_{4\,5\,6}\\
\sqrt{2}\,\alpha_{1\,5\,6}&\sqrt{\frac{2}{3}}\,\alpha_{2\,5\,6}&\frac{1}{\sqrt{2}}\,\alpha_{3\,5\,6}&\sqrt{\frac{2}{3}}\,\alpha_{4\,5\,6}&0
\end{array}\right),
\end{align}
where $\alpha_{i\,j\,k}=\alpha_{\lvert p}(e_i,e_j,e_k)$ for a $p\in F_{\theta_j}^{-1}(\overline{p})$.
Substitute these results into the minimal polynomial equation for $L(s)$. 
A tedious but straightforward calculation shows that
the ideal generated by the resulting equations coincides with the ideal generated by the classical Weyl Identity with $(i,j)=(3,6)$ and the first four covariant derivatives with respect to $e_6\in D_6$ thereof.
\end{proof}

 {\bf Advantages of the invariant Weyl Identity.}
We shall now describe the advantages of the invariant Weyl Identity deduced in the previous paragraph compared to the classical Weyl Identities.

\smallskip

The discussion above, in particular Theorem\,\ref{l1}, highlights what important role the higher covariant derivatives of the Weyl
Identities play in the classification. 
 Since the classical Weyl Identities depend on several indices, i.e. $i$ and $j$, taking higher covariant derivatives of these identities would lead to a plethora of different cases. By contrast, in terms of the invariants $\hat g$, $\alpha$ and $B\otimes B^{-1}$
it is entirely possible to consider higher covariant derivatives since the Weyl Identities are
condensed in a single tensor identity.

\smallskip

In order to prove homogeneity of isoparametric hypersurfaces with $g=6$, one needs to analyze the interaction of the isospectral families that show up at different focal submanifolds (on the same normal great circle of $M^n$) - see also Subsection\,\ref{g=6} for more details. 
The invariant Weyl Identity contains all the information of isospectral families at different focal submanifolds!

\smallskip

For $g=3$ the Weyl Identities turn each curvature distribution $D_j$ into a normed algebra \cite{karcher}.
An interesting question is if, as for $g=3$, there exists
a geometric structure for $g=6$ captured by the Weyl Identities. 
The existing examples suggest a geometry closely related to $\Gtwo$.
This approach might lead to a viable strategy for completing classification.

Another open question is, whether for $g=4$ the Weyl Identity reflects parts of the Clifford algebra structure, which is the central underlying structure in this case.

\subsection{Symmetry identities}
\label{symm}
Throughout this section $p$ shall denote a fixed point of the manifold $M$.

\smallskip

Let $t\in\R$ and $k\in\N$ be given. The parallel surface map given by 
$$F_t(p)\mapsto F_{t+2(\theta_k-t)}(p)=F_{2\theta_k-t}(p)$$ maps 
the submanifold $F_t(M)\subset\Sph^{n+1}$ onto itself and flips the sign of $\nu_t$. Hence
 there exist diffeomorphisms $\tau_k:M\rightarrow M$ such that
\begin{align*}
F_t\circ\tau_k=F_{2\theta_k-t} \hspace{1cm}\mbox{and}\hspace{1cm} \nu_{t\vert\tau_k(p)}=-\nu_{2\theta_k-t\lvert p}\hspace{0.4cm}\forall p\in M.
\end{align*}
Clearly, the maps $\tau_k:M\rightarrow M$ are reflections in the focal submanifolds, and in particular involutions.

\smallskip

\begin{lemma}
For $j\in\left\{1,...,g\right\}$, the map
$\tau_j:M\rightarrow M$ is an isometry of $(M,\hat{g})$. 
Furthermore, the differences $\theta_j-\theta_k$ generate a discrete cyclic subgroup in $\mathbb{R}/{\mathbb{Z}\pi}$ and the involutions $\tau_j,  1\leq j\leq g$, are the reflections in the dihedral group $D_g=\left\langle \tau_1,\tau_g\right\rangle\subset\mbox{Diff}(M)$.
\end{lemma}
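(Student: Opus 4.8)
The plan is to handle the three assertions separately: the isometry claim analytically via the lift $\hat F_t$, and the two group-theoretic claims by reducing everything to the action of the reflections on the parameter $t$. For the isometry statement I would feed the defining relations $F_t\circ\tau_j=F_{2\theta_j-t}$ and $\nu_{t\vert\tau_j(p)}=-\nu_{2\theta_j-t\vert p}$ into $\hat F_t=e^{-it}\hat F_0$. A direct substitution gives
\begin{align*}
\hat F_t\circ\tau_j=\overline{\hat F_{2\theta_j-t}},
\end{align*}
since flipping the sign of $\nu$ is exactly complex conjugation of $\hat F=\tfrac{1}{\sqrt2}(F+i\nu)$. Differentiating yields $d\hat F_t\circ d\tau_j=\overline{d\hat F_{2\theta_j-t}}$, and because $\mbox{Re}\langle\bar u,\bar v\rangle_h=\mbox{Re}\langle u,v\rangle_h$ I obtain $\hat g(d\tau_jX,d\tau_jY)=\mbox{Re}\langle d\hat F_{2\theta_j-t}X,d\hat F_{2\theta_j-t}Y\rangle_h$. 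Invoking the $t$-independence of $\hat g$ from Theorem\,\ref{independent}, the right-hand side equals $\hat g(X,Y)$, so $\tau_j$ is an isometry of $(M,\hat g)$.

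For the cyclic subgroup I would use the normalization $\phi=\pi/2g$, so that $\theta_k=\frac{(2k-1)\pi}{2g}$ and hence $\theta_j-\theta_k=(j-k)\pi/g$. These differences are integer multiples of $\pi/g$ and therefore generate the cyclic group $\frac{\pi}{g}\Z/\pi\Z\cong\Z/g\Z$ inside $\R/\pi\Z$, which is discrete of order $g$. To reach the dihedral structure I would then introduce, for $c\in\R$, the translation diffeomorphism $\sigma_c$ characterized by $F_t\circ\sigma_c=F_{t+c}$, and record two facts. First, $\sigma_c=\mathrm{id}$ if and only if $c\in2\pi\Z$, because $F_c(p)=\cos(c)F_0(p)+\sin(c)\nu_{0\vert p}=F_0(p)$ forces $\cos c=1$, $\sin c=0$ once $F_0(p)$ and $\nu_{0\vert p}$ are independent. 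Second, applying the defining relation of each reflection to the parameter twice gives the composition rule $\tau_k\circ\tau_l=\sigma_{2(\theta_l-\theta_k)}$.

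With these in hand the conclusion is mechanical. Setting $\rho:=\tau_1\circ\tau_g=\sigma_{2(\theta_g-\theta_1)}=\sigma_{-2\pi/g}$ (modulo $2\pi$), the first fact shows $\rho^g=\sigma_{-2\pi}=\mathrm{id}$ while $\rho^m\neq\mathrm{id}$ for $0<m<g$, so $\rho$ generates a cyclic group of order exactly $g$. The same composition rule gives $\tau_j\circ\tau_1=\sigma_{2(\theta_1-\theta_j)}=\sigma_{-2(j-1)\pi/g}=\rho^{j-1}$, i.e. $\tau_j=\rho^{j-1}\circ\tau_1$. Hence every $\tau_j$ lies in $\langle\tau_1,\tau_g\rangle$, and as $j$ ranges over $1,\dots,g$ the elements $\rho^{j-1}\tau_1$ exhaust the $g$ reflections of the dihedral group $D_g=\langle\tau_1,\tau_g\rangle$ of order $2g$, with rotation subgroup $\langle\rho\rangle\cong\Z/g\Z$.

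I expect the only genuinely delicate point to be the determination of the order of $\rho$: one must ensure that the translations $\sigma_{2\pi m/g}$ are pairwise distinct and nontrivial for $0<m<g$, which is precisely what the characterization $\sigma_c=\mathrm{id}\iff c\in2\pi\Z$ secures. Everything else is bookkeeping with the parameter $t$, provided one verifies the identities at a regular value $t$ for which $2\theta_j-t$ is also regular, so that $F_t$ and its lift are immersions and the reflections and translations are unambiguously determined by their effect on $F$ and $\nu$.
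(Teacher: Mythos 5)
Your proposal is correct, and it actually covers more ground than the paper's own proof. For the isometry claim your route is essentially the paper's: the paper records $e^{-2i\theta_j}\hat{F}_0(p)=\hat{F}_{2\theta_j}(p)=\overline{\hat{F}_0(\tau_j(p))}$, differentiates, and compares real parts of Hermitian products, which is the same computation as your $\hat{F}_t\circ\tau_j=\overline{\hat{F}_{2\theta_j-t}}$ followed by conjugation-invariance of $\mbox{Re}\langle\cdot,\cdot\rangle_h$; the only cosmetic difference is that the paper works at $t=0$, where the unit phase $e^{-2i\theta_j}$ cancels inside the Hermitian product, so it does not even need to invoke Theorem~\ref{independent}, whereas your appeal to the $t$-independence of $\hat{g}$ is an equally valid shortcut. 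The genuine difference is in the second half: the paper's proof stops after the isometry identity and offers no argument at all for the cyclic and dihedral assertions, evidently treating them as clear from the definitions. You supply the missing argument, and it is sound: the translations $\sigma_c$ are unambiguous because $F_t$ is an embedding at regular $t$, the composition rule $\tau_k\circ\tau_l=\sigma_{2(\theta_l-\theta_k)}$ follows by applying the defining relations twice, and the criterion $\sigma_c=\mathrm{id}\iff c\in 2\pi\Z$ pins down the order of $\rho=\tau_1\circ\tau_g$ as exactly $g$, giving $\tau_j=\rho^{j-1}\circ\tau_1$. Two small points should be made explicit to finish the dihedral claim. First, the relation $\tau_1\rho\tau_1=\tau_g\tau_1=\rho^{-1}$ (a consequence of $\tau_1^2=\tau_g^2=\mathrm{id}$) is what makes $\langle\rho,\tau_1\rangle$ a quotient of the dihedral group of order $2g$. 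Second, to rule out a proper quotient, i.e.\ to see that the elements $\rho^{j-1}\tau_1$ are genuinely reflections and not rotations, you must check $\tau_1\notin\langle\rho\rangle$: this is immediate because every translation satisfies $\nu_t\circ\sigma_c=\nu_{t+c}$, while $\tau_1$ reverses the sign of the normal field, so equating $\tau_1$ with some $\sigma_c$ and evaluating at $t=\theta_1-c/2$ would force a unit normal vector to equal its own negative. With these one-line additions your argument is complete.
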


\begin{proof}
The very definition of $\tau_k$ immediately implies
$e^{-2i\theta_j}\hat{F}_0(p)=\hat{F}_{2\theta_j}(p)=\overline{\hat{F}_0(\tau_j(p))}.$
Consequently,
$e^{-2i\theta_j}d\hat{F}_{0\vert p}Y=\overline{d\hat{F}_{0\vert\tau_j(p)}d\tau_{j\vert p}Y}.$ 
Thus we get
\begin{align*}
\hat{g}_{\vert p}(X,Y)=\hat{g}_{\vert \tau_j(p)}(d\tau_{j\vert p}X,d\tau_{j\vert p}Y).
\end{align*}
\auxqed
\end{proof}

In the next theorem we prove the identities which we call \textit{Symmetry identities}.
\begin{theorem}
\label{glob}
For $j\in\left\{1,...,g\right\}$ and $p\in M$
\begin{align*}
\alpha_{\vert p}(X,Y,Z)=-\alpha_{\vert \tau_j(p)}(d\tau_{j\vert p}X,d\tau_{j\vert p}Y,d\tau_{j\vert p}Z),
\end{align*}
or, for short, $(\tau_j)_*\alpha = -\alpha$.
Furthermore, the higher covariant derivatives of $\alpha$ transform 
exactly as $\alpha$ does under $\tau_j$, i.e.,
 $\nabla^i\alpha=-(\tau_j)_{*}\nabla^i\alpha$ for all $i\geq 0.$
\end{theorem}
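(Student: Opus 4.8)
The plan is to establish the transformation law $(\tau_j)_*\alpha=-\alpha$ first, and then bootstrap it to all covariant derivatives using that $\tau_j$ is an isometry of $(M,\hat g)$. The key observation is the relation $e^{-2i\theta_j}\hat F_0(p)=\overline{\hat F_0(\tau_j(p))}$ already extracted in the preceding lemma, which differentiates to $e^{-2i\theta_j}d\hat F_{0\vert p}Y=\overline{d\hat F_{0\vert \tau_j(p)}d\tau_{j\vert p}Y}$. Since $\alpha^t$ is independent of $t$ by Theorem~\ref{alpha}, I am free to compute $\alpha$ at whichever parameter is most convenient; the natural choice is to use the expression $\tfrac12\alpha^0(X,Y,Z)=\mathrm{Re}\langle d_X d\hat F_0 Z,\,J\,d\hat F_0 Y\rangle_h$ obtained in that same proof, since this is manifestly built from $\hat F_0$ and its Euclidean second derivatives, and therefore transforms cleanly under the conjugation relation.

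Concretely, I would evaluate $\alpha_{\vert\tau_j(p)}(d\tau_j X,d\tau_j Y,d\tau_j Z)$ using the Stiefel-manifold formula $\tfrac12\alpha(U,V,W)=\mathrm{Re}\langle d_U d\hat F_0 W,\,J\,d\hat F_0 V\rangle_h$, substitute $d\hat F_{0\vert\tau_j(p)}d\tau_j=e^{-2i\theta_j}\,\overline{d\hat F_{0\vert p}}$ from the differentiated conjugation identity, and track how complex conjugation interacts with $J$ (multiplication by $i$) and with $\mathrm{Re}\langle\cdot,\cdot\rangle_h$. The crucial point is that conjugating both slots of the Hermitian product leaves its real part unchanged, while $J$ acting on a conjugated vector picks up a sign because $\overline{iv}=-i\bar v$; the scalar factors $e^{-2i\theta_j}$ and their conjugates cancel in modulus. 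This produces exactly one net sign, yielding $\alpha_{\vert\tau_j(p)}(d\tau_j X,d\tau_j Y,d\tau_j Z)=-\alpha_{\vert p}(X,Y,Z)$, which is the first claim.

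For the statement about the higher covariant derivatives, I would argue inductively. Since $\tau_j$ is an isometry of $(M,\hat g)$ by the previous lemma, it commutes with the Levi-Civita connection $\nabla$: pullback of a covariant derivative equals the covariant derivative of the pullback, i.e. $(\tau_j)_*(\nabla^i\alpha)=\nabla^i((\tau_j)_*\alpha)$. Combining this naturality with the base case $(\tau_j)_*\alpha=-\alpha$ immediately gives $(\tau_j)_*(\nabla^i\alpha)=\nabla^i(-\alpha)=-\nabla^i\alpha$ for every $i\ge 0$, which is the second assertion. This step is essentially formal once one has recorded that $\tau_j$ is a $\hat g$-isometry.

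The main obstacle is purely bookkeeping in the first step: one must be careful about the placement of complex conjugation when passing $d\tau_j$ through both $d\hat F_0$ and the operator $J$, and to confirm that exactly one sign survives rather than two (which would give the wrong conclusion $(\tau_j)_*\alpha=+\alpha$) or none. The sign arises solely from the interaction $J\circ(\text{conjugation})=-(\text{conjugation})\circ J$, so the calculation hinges on isolating that single anticommutation and verifying that the phase factors $e^{\pm 2i\theta_j}$ and the reality of $\mathrm{Re}\langle\cdot,\cdot\rangle_h$ contribute no additional sign. I expect no genuine difficulty beyond this careful accounting.
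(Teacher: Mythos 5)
Your proposal is correct, but it proves the identity by a genuinely different computation than the paper. The paper stays entirely in the sphere: it writes $\alpha_{\vert p}=\alpha^{2\theta_j}_{\vert p}$ (using the $t$-independence from Theorem~\ref{alpha}), then uses $F_0\circ\tau_j=F_{2\theta_j}$ and $\nu_0\circ\tau_j=-\nu_{2\theta_j}$ to obtain the anti-conjugation $A_{0}\circ d\tau_j=-d\tau_j\circ A_{2\theta_j}$ together with the fact that $\tau_j\colon(M,g_{2\theta_j})\to(M,g_0)$ is an isometry; the sign in $(\tau_j)_*\alpha=-\alpha$ therefore comes from the flip of the unit normal, propagated through $\nabla^{t}A_t$. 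You instead work upstairs in $\C^{n+2}$: starting from the conjugation relation $\hat F_0\circ\tau_j=e^{2i\theta_j}\,\overline{\hat F_0}$ (equivalently its differential, which you correctly quote from the lemma preceding the theorem) and the formula $\tfrac12\alpha^0(X,Y,Z)=\mathrm{Re}\langle d_Xd\hat F_0Z,J\,d\hat F_0Y\rangle_h$ from the proof of Theorem~\ref{alpha}, the sign is produced solely by the anticommutation of $J$ with complex conjugation, the phases $e^{\pm2i\theta_j}$ cancelling by sesquilinearity and $\mathrm{Re}\langle\bar u,\bar v\rangle_h=\mathrm{Re}\langle u,v\rangle_h$. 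I checked the bookkeeping you flag as the main risk: with $\langle\cdot,\cdot\rangle_h$ conjugate-linear in (say) the second slot, one gets $\mathrm{Re}\langle e^{2i\theta_j}\bar u,\,i e^{2i\theta_j}\bar v\rangle_h=\mathrm{Re}\bigl(i\langle u,v\rangle_h\bigr)=-\mathrm{Re}\langle u,iv\rangle_h$, so exactly one sign survives, as you claim. Your treatment of the higher covariant derivatives (naturality of $\nabla$ under the $\hat g$-isometry $\tau_j$, plus the base case) is precisely what the paper leaves as ``immediate.'' As for what each route buys: yours is the more structural one --- it exhibits the Symmetry identity as the anti-invariance of the second fundamental form of the Lagrangian submanifold under an antiholomorphic isometry of the quadric, very much in the spirit the paper advocates --- while the paper's computation uses only elementary sphere data ($g_t$, $A_t$, $\nabla^t$). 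Two small caveats for your write-up: the displayed formula you invoke appears inside the \emph{proof} of Theorem~\ref{alpha}, not in its statement, so cite it as such; and the chain-rule step requires extending $d\tau_jZ$ to the $\tau_j$-pushforward vector field, which is harmless since $\alpha$ is tensorial.
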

\begin{proof}
By Theorem\,\ref{alpha} and $F_0\circ\tau_j=F_{2\theta_j}$ we have
\begin{align*}
\alpha_{\vert p}(X,Y,Z)&=\alpha^{2\theta_j}_{\vert p}(X,Y,Z)
\\&=\left\langle dF_{0\vert \tau_j(p)}d\tau_{j\vert p}X,dF_{0\vert \tau_j(p)}d\tau_{j\vert p} (\nabla_{Y}^{2\theta_j}A_{2\theta_j})Z\right\rangle_{\Sph^{n+1}}\\&=
g_{0\vert\tau_j(p)}(d\tau_{j\vert p}X,d\tau_{j\vert p}(\nabla_{Y}^{2\theta_j}A_{2\theta_j})Z).
\end{align*}
The identities $F_0\circ\tau_j=F_{2\theta_j}$ and $\nu_0\circ\tau_j=-\nu_{2\theta_j}$ imply
\begin{align*}
A_{0\vert\tau_j(p)}d\tau_{j\vert p}X_1=-d\tau_{j\vert p}A_{2\theta_j\vert p}X_1
\end{align*}
for all $p\in M$ and for all $X_1\in T_pM$. Moreover, the first identity also implies that $\tau_j:(M, g_{2\theta_j})\rightarrow (M, g_0)$ is an isometry.
Thus we get
\begin{align*}
&g_{0\vert\tau_j(p)}(d\tau_{j\vert p}X,d\tau_{j\vert p}(\nabla_{Y}^{\theta_j}(A_{2\theta_j}Z)-A_{2\theta_j}\nabla^{2\theta_j}_{Y}Z))
\\&=
-g_{0\vert\tau_j(p)}(d\tau_{j\vert p}X,(\nabla_{d\tau_{j\vert p}Y}^{0}(A_{0}d\tau_{j\vert p}Z)-A_{0}\nabla^{0}_{d\tau_{j\vert p}Y}
d\tau_{j\vert p}Z))
\\&=-\alpha_{\vert \tau_j(p)}(d\tau_{j\vert p}X,d\tau_{j\vert p}Y,d\tau_{j\vert p}Z),
\end{align*}
and thus the first claim. From this the second claim is immediate. 
\end{proof} 

\begin{remark}
\label{global}
\begin{enumerate}
\item Note that the Symmetry Identities relate $\alpha_p$ to $\alpha_q$, where $p$ and $q$ are different points of $M$.
This means that in contrast to the Weyl Identities, the Symmetry identities are not pointwise identities.
\item In Section\,4, Lemma 4.1 in \cite{mi2} Miyaoka states some identities, which she refers to as \lq global symmetry\rq, and which were deduced by her in \cite{mi5}. 
These identities are presumably equivalent to the Symmetry Identities.
However, the author does not understand the proof of the \lq global identities\rq\ in \cite{mi5}.
\end{enumerate}
\end{remark}

\section{A geometric interpretation of homogeneity}
\label{sec4}
In this section we prove that homogeneity of isoparametric hypersurfaces with $g=6$ is equivalent to a geometric property of the Lagrangian submanifold in the complex quadric.
We hope that a detailed study of the geometry of the Lagrangian submanifold finally will lead to a geometric classification of isoparametric hypersurfaces in spheres with $g=6$.

\smallskip

This section is structured as follows:
in the first subsection we recall what is known for the case $g=6$, in the second subsection we determine $\alpha$ for the homogeneous examples with $g=6$.
Finally, in the third subsection, we give several equivalent formulations of homogeneity of isoparametric hypersurfaces with $g=6$.

\subsection{Isoparametric hypersurfaces with $g=6$}
\label{g=6}
In this subsection we summarize the known results for isoparametric hypersurfaces in spheres with $g=6$.
 
\smallskip

For the case of isoparametric hypersurfaces in spheres with $g=6$, all multiplicities coincide and are given either by $m=1$ or $m=2$ \cite{abresch}. 
Furthermore, exactly two examples with $g=6$ are known, both of which are homogeneous.
They are given as orbits of the isotropy representation of $\Gtwo/\SO(4)$ or as orbits in the unit sphere $\Sph^{13}$ of the Lie
algebra $\frak{g}_2$ of the adjoint representation of the Lie group $\Gtwo$ and have multiplicities $m=1$ and $m=2$, respectively.
The following conjecture is due to Dorfmeister and Neher and is believed to be true.

\medskip

\noindent{\bf Conjecture (\cite{dn}):} Each maximal isoparametric hypersurface with $g=6$ principal curvatures is homogeneous.

\medskip

Dorfmeister and Neher proved this conjecture in the affirmative for the case $m=1$. 
Since homogeneous isoparametric hypersurfaces in spheres were classified by Takagi and Takahashi \cite{tt}, this provides a classification of 
isoparametric hypersurfaces with $(g,m)=(6,1)$. 
Similarly, proving that isoparametric hypersurfaces with $(g,m)=(6,2)$ are homogeneous, would yield a classification of such hypersurfaces. 
Note that the case $m=2$ is not classified yet, see the appendix of this paper. 
Therefore proving the above conjecture still remains the goal for isoparametric hypersurfaces with $(g,m)=(6,2)$.

\smallskip

Below we explain the approach by Dorfmeister and Neher for the case $m=1$. 
The starting point of their work is the following algebraic description of isoparametric hypersurfaces in spheres which is due to M\"unzner.

\begin{theorem}[\cite{munzner}] 
a) Let $M\subset\Sph^{n+1}$ be an isoparametric hypersurface with $g$ distinct eigenvalues.
Then there exists a homogeneous polynomial $F:\R^{n+2}\rightarrow\R$
of degree $g$ and positive integers $m_1$ and $m_2$ such that\\ 
$M$ is an open submanifold of a level surface $$M_t=\Sph^{n+1}\cap F^{-1}(t)$$ \quad for a $t\in(-1,1)$,
and the identities $$\langle\mbox{grad}F(x),\mbox{grad}F(x)\rangle=g^2\langle x,x\rangle^{g-1},$$
$$\Delta F(x)=\tfrac{1}{2}(m_2-m_1)g^2\langle x,x\rangle^{g/2-1}$$ and
$n=\tfrac{g}{2}(m_1+m_2)$ are satisfied.\\
b) Conversely, for each homogeneous polynomial $F$ of degree $g$ satisfying the three identities in a), the level surfaces $M_t$, $t\in(-1,1)$, are 
isoparametric.
\end{theorem}

For the case of isoparametric hypersurfaces with $(g,m)=(6,1)$, Dorfmeister and Neher proved that there exists - up to isomorphism - 
only one isoparametric polynomial in $\R^8$.
The central step in their proof is a partial classification of the so-called $E$-families.
Dorfmeister and Neher provided homogeneity by showing that only one of the explicit examples 
of $E$-families is associated to an isoparametric hypersurface in a sphere.

\smallskip

In \cite{siffert} the author gave a simplified proof of the theorem of Dorfmeister and Neher.
The central step in \cite{siffert} consists in classifying the isospectral families at one focal submanifold, which can be shown to be equivalent to classifying the $E$-families introduced in \cite{dn}.
Below we reformulate the essential insights from \cite{dn} and \cite{siffert} in terms of the isospectral families, since 
we use this notation throughout this paper.

\smallskip

The homogeneity of isoparametric hypersurfaces with $g=6$ is equivalent to the property that the kernels of the isospectral families $L(s)$ are independent of $s$ \cite{dn, mi2}. 
Although requiring the family $L(s)$ have eigenvalues $\pm\sqrt{3}, \pm1/\sqrt{3}$ and $0$, all with the same multiplicity $m$, is a very restrictive condition on the symmetric real $5m\times 5m$-matrices $A_{\nu_1}$ and $A_{\nu_2}$, so far no one has yet succeeded in classifying such matrices for $m\ge 2$.
Examples of such matrices are provided by the irreducible representations of $\SU(2)$.
Among these examples one finds cases where the kernel of $L(s)$ is not constant when varying $s$.
To prove homogeneity of isoparametric hypersurfaces with $g=6$, it thus does not suffice to study the properties of just one isospectral family.
One also needs to analyze the interaction of the isospectral families that show up at different focal submanifolds (on the same normal great circle of $M$). 

\smallskip

Miyaoka also worked on the classification of isoparametric hypersurfaces with $g=6$.\\
Her work on the case $(g,m)=(6,1)$ is contained in \cite{mi1} and the corresponding Erratum \cite{mi1err}.
However, there is still a crucial gap in the Erratum \cite{mi1err} - see \cite{siffert} for details.\\
Miyaoka's work on the case $(g,m)=(6,2)$ is contained in \cite{mi2} and the corresponding Erratum \cite{mierr}.
However, there is also a crucial gap in the Erratum \cite{mierr} - see the appendix of the present paper.

\subsection{Calculation of $\alpha$ for the homogeneous examples with $g=6$}
\label{chhom}
In the case $g = 6$ only two examples are known, both of which are homogeneous. They are given as orbits of the
isotropy representation of $\Gtwo/\SO(4)$ or the compact real Lie group $\Gtwo$, respectively.
In both cases all six principal curvatures coincide and are given by $m=1$ and $m=2$, respectively.

\smallskip

For both of the homogeneous examples Miyaoka \cite{mi3,mi4} calculated the Christoffel 
symbols $$\Lambda_{i,j}^k:=g_{0}(\nabla^0_{f_i}f_j,f_k),$$ where $(f_n)_{n=1}^{6m}$ is 
a $g_0$-orthonormal frame with $f_{i+6k}\in D_{i}$ for $i\in\{1,\cdots, 6\}$ and $k=0,\cdots,m-1$.
In what follows we use these results to determine $\alpha$ for the homogeneous examples.

\smallskip

From $A_0f_i=\lambda_{i}(0)f_i$ we obtain
\begin{align*}
(\nabla^0_{X}A_0)f_i=(\lambda_{i}(0)-A_0)\nabla^0_{X}f_i,
\end{align*}
where $X\in\Gamma(TM)$ and the index $i$ in $\lambda_i(0)$ is interpreted to be cyclic of order $6$.
Thus for $j\neq k$ we get 
\begin{align*}
\Lambda_{i,j}^k=(\lambda_{j}(0)-\lambda_{k}(0))^{-1}\alpha(f_i,f_j,f_k).
\end{align*}
Instead of calculating $\alpha(f_i,f_j,f_k)$ we determine $\alpha(e_i,e_j,e_k)$, where $(e_i)_{i=1}^{6m}$ denotes the $\hat{g}$-orthonormal basis with $e_i\in D_{i}$, which is associated to the $g_0$-orthonormal basis $(f_i)_{i=1}^{6m}$. In other words,
\begin{align*}
e_i=\sqrt{2(1+(\lambda_{i}(0))^2)^{-1}}f_i,
\end{align*}
for $i\in\left\{1,...,6m\right\}$.

\smallskip

Substituting the Christoffel symbols \cite{mi3}  into the above equation we obtain the following lemma.

\begin{lemma}
\label{heee}
For the homogeneous isoparametric hypersurfaces with $(g,m)=(6,1)$ the components $\alpha_{i\,j\,k}:=\alpha(e_i,e_j,e_k)$ are given by
\begin{align*}
\alpha_{1\,2\,3}=\alpha_{3\,4\,5}=\alpha_{1\,5\,6}=\sqrt{\tfrac{3}{2}},\,\alpha_{2\,4\,6}=-\sqrt{\tfrac{3}{2}},\,\alpha_{1\,3\,5}=-2\sqrt{\tfrac{3}{2}}.
\end{align*}
All other $\alpha_{i\,j\,k}$ with $i\leq j\leq k$ vanish.
\end{lemma}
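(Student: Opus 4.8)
The plan is to obtain every component by direct substitution into the relation $\alpha(f_i,f_j,f_k)=(\lambda_j^0-\lambda_k^0)\,\Lambda_{i,j}^k$ established just above the statement, and then to pass to the $\hat g$-orthonormal frame. By Theorem~\ref{alpha} the tensor $\alpha$ is independent of $t$, so the whole computation may be carried out at $t=0$, where the Codazzi relation $(\nabla^0_XA_0)f_j=(\lambda_j^0-A_0)\nabla^0_Xf_j$ together with self-adjointness of $A_0$ yields that formula. Rescaling through $e_i=\sqrt{2(1+(\lambda_i^0)^2)^{-1}}\,f_i$ gives
\begin{align*}
\alpha(e_i,e_j,e_k)=\frac{2\sqrt2\,(\lambda_j^0-\lambda_k^0)}{\sqrt{(1+(\lambda_i^0)^2)(1+(\lambda_j^0)^2)(1+(\lambda_k^0)^2)}}\,\Lambda_{i,j}^k,
\end{align*}
which, using $1+(\lambda_\ell^0)^2=\sin^{-2}\theta_\ell$ and $\cot\theta_j-\cot\theta_k=\sin(\theta_k-\theta_j)/(\sin\theta_j\sin\theta_k)$, collapses to the compact form $\alpha(e_i,e_j,e_k)=2\sqrt2\,\sin\theta_i\,\sin\!\big((k-j)\pi/6\big)\,\Lambda_{i,j}^k$. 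Everything then reduces to inserting the explicit data.

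Before computing I would cut down the work using the structural results already in hand. By Lemma~\ref{van} and $m=1$, a component $\alpha_{ijk}$ can be nonzero only when $i,j,k$ are pairwise distinct (two equal indices force two arguments into the same one-dimensional $D_j$), so only the $\binom{6}{3}=20$ triples survive; moreover $\alpha$ is totally symmetric, so each such triple carries a single value, independent of the ordering of its entries. The Symmetry identities of Theorem~\ref{glob} then organize these triples into the orbits of the induced dihedral action on the distributions, under which the reflection $\tau_k$ sends $D_j$ to $D_{2k-j}$ (indices cyclic mod $6$) and $\alpha$ changes sign along each reflection. This forces the components within one orbit to share a common magnitude --- explaining at once why $\alpha_{123}=\alpha_{345}=\alpha_{156}$, which lie in a single orbit linked by sign-preserving rotations --- while pinning down which orbits must vanish identically and supplying sign cross-checks via $(\tau_j)_*\alpha=-\alpha$.

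With these reductions in place, I would insert the explicit principal curvatures $\lambda_i^0=\cot\theta_i$ with phases $\theta_i=\tfrac{\pi}{12}+(i-1)\tfrac{\pi}{6}\in(0,\pi)$, i.e. the sextuple $2+\sqrt3,\,1,\,2-\sqrt3,\,\sqrt3-2,\,-1,\,-(2+\sqrt3)$, together with Miyaoka's Christoffel symbols $\Lambda_{i,j}^k$ for the $(g,m)=(6,1)$ orbit, and simplify the resulting surds. For each surviving orbit it suffices to evaluate one representative, choosing the ordering of $(i,j,k)$ that meets the most convenient $\Lambda_{i,j}^k$; the remaining components then follow from the symmetry of $\alpha$.

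The main obstacle is bookkeeping rather than conceptual. First, one must align Miyaoka's normalization, sign, and cyclic-index conventions for $\Lambda_{i,j}^k$ with those used here, since any mismatch propagates through every surd; a related subtlety is that the factor $\sin\theta_i$ must be read off from the progression representatives $\theta_i\in(0,\pi)$ rather than the reduced ones in $(-\tfrac\pi2,\tfrac\pi2)$, as the sign of $\sin\theta_i$ is what enters. The sharpest consistency test is the total symmetry of $\alpha$ itself: although $\alpha(f_i,f_j,f_k)=(\lambda_j^0-\lambda_k^0)\Lambda_{i,j}^k$ is symmetric in $i,j,k$, the right-hand side is not manifestly so, and computing the same component in its three guises (and matching the $(\tau_j)_*\alpha=-\alpha$ prediction) confirms the conventions have been transcribed correctly. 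Verifying that the allegedly vanishing orbits genuinely cancel, and that the trace-free property of Lemma~\ref{van} holds for the final array, completes the check.
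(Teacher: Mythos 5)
Your proposal is correct and follows essentially the same route as the paper: the paper's own proof consists precisely of the relation $\alpha(f_i,f_j,f_k)=(\lambda_j^0-\lambda_k^0)\,\Lambda_{i,j}^k$, the rescaling $e_i=\sqrt{2(1+(\lambda_i^0)^2)^{-1}}\,f_i$, and substitution of Miyaoka's Christoffel symbols, which is exactly your core computation. Your trigonometric compact form, the case reduction via Lemma~\ref{van} with $m=1$, and the dihedral-orbit bookkeeping are useful supplementary organization (with the one caveat that the Symmetry identities only force equal magnitudes within an orbit --- because of the sign ambiguity in identifying $d\tau_k e_i$ with $\pm e_{2k-i}$ they cannot by themselves pin down which orbits vanish --- so the vanishing must indeed come from the substitution, as you verify at the end).
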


Next we consider the case $(g,m)=(6,2).$ Following Miyaoka, we use the notation $\overline{f}_i:=f_{6+i},i\in\left\{1,...,6\right\}$.
Furthermore, an entry $\overline{e}_i$ of $\alpha$ will be denoted by an index $\overline{i}$, e.g., $\alpha(\overline{e}_1,e_5,e_6)$ is denoted by $\alpha_{\overline{1}\,5\,6}.$
Clearly, $f_i$ and $\overline{f}_i$ constitute an orthonormal basis of the two-dimensional distribution $D_i$.

\begin{remark}
 Note, that the above choice of $f_i$ and $\overline{f}_i$ is not canonical.
This freedom in the choice of the basis, is one of the reasons why computer computations, which aim to determine the possible $\alpha$, fail until today.
 \end{remark}
 
Substituting the Christoffel symbols \cite{mi4} into the above equation we obtain the following lemma.
\begin{lemma}
\label{heee2}
For the homogeneous isoparametric hypersurfaces with $(g,m)=(6,2)$ the components $\alpha_{i\,j\,k}:=\alpha(e_i,e_j,e_k)$ are given by
\begin{align*}
&\alpha_{1\,\overline{5}\,6}=-\sqrt{\tfrac{3}{2}},\,\,\alpha_{\overline{1}\,5\,6}=\alpha_{1\,5\,\overline{6}}=\alpha_{\overline{1}\,\overline{5}\,\overline{6}}=\sqrt{\tfrac{3}{2}},\,\,\,\alpha_{2\,\overline{4}\,6}=-\sqrt{\tfrac{3}{2}},\,\,\alpha_{\overline{2}\,4\,6}=\alpha_{2\,4\,\overline{6}}=\alpha_{\overline{2}\,\overline{4}\,\overline{6}}=\sqrt{\tfrac{3}{2}},\\
&\alpha_{1\,\overline{2}\,3}=-\sqrt{\tfrac{3}{2}},\,\,\alpha_{\overline{1}\,2\,3}=\alpha_{1\,2\,\overline{3}}=\alpha_{\overline{1}\,\overline{2}\,\overline{3}}=\sqrt{\tfrac{3}{2}},\,\,\,\alpha_{3\,\overline{4}\,5}=\sqrt{\tfrac{3}{2}},\,\,\alpha_{\overline{3}\,4\,5}=\alpha_{3\,4\,\overline{5}}=\alpha_{\overline{3}\,\overline{4}\,\overline{5}}=-\sqrt{\tfrac{3}{2}},\\
&\alpha_{1\,\overline{3}\,5}=2\sqrt{\tfrac{3}{2}},\,\,\alpha_{\overline{1}\,3\,5}=\alpha_{1\,3\,\overline{5}}=\alpha_{\overline{1},\overline{3},\overline{5}}=-2\sqrt{\tfrac{3}{2}}.
\end{align*}
All other $\alpha_{i\,j\,k}$ with $i\leq j\leq k$ vanish.
\end{lemma}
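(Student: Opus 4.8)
The plan is to invert the boxed relation preceding this lemma, namely $\Lambda_{i,j}^{k}=\frac{1}{\lambda^0_j-\lambda^0_k}\,\alpha(f_i,f_j,f_k)$ (valid whenever $f_j,f_k$ lie in distinct distributions), reading it as $\alpha(f_i,f_j,f_k)=(\lambda^0_j-\lambda^0_k)\,\Lambda_{i,j}^{k}$, and then to feed in Miyaoka's explicit Christoffel symbols from \cite{mi4} exactly as in the $(g,m)=(6,1)$ case. First I would record the data that is intrinsic to $g=6$: from $\theta_j=(2j-1)\pi/12$ one gets the principal curvatures $\lambda^0_1=2+\sqrt3$, $\lambda^0_2=1$, $\lambda^0_3=2-\sqrt3$, $\lambda^0_4=\sqrt3-2$, $\lambda^0_5=-1$, $\lambda^0_6=-(2+\sqrt3)$. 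The passage to the $\hat g$-orthonormal frame uses $e_i=\sqrt{2(1+(\lambda^0_i)^2)^{-1}}\,f_i$, and the scaling factors $\sqrt{2(1+(\lambda^0_i)^2)^{-1}}$ simplify to $(\sqrt3-1)/2$ for the distributions $D_1,D_6$, to $1$ for $D_2,D_5$, and to $(\sqrt3+1)/2$ for $D_3,D_4$; since $\overline f_i$ lies in $D_i$, the barred vectors $\overline e_i$ carry the same factor as $e_i$.

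By trilinearity of $\alpha$ the whole computation then reduces to multiplying, for each triple, the product of the three scaling factors, the difference $(\lambda^0_j-\lambda^0_k)$, and Miyaoka's symbol. Before substituting I would trim the bookkeeping using Lemma \ref{van}: $\alpha$ vanishes whenever two arguments lie in the same distribution $D_r$, and here $D_r=\Span\{f_r,\overline f_r\}$, so only triples whose indices are pairwise distinct modulo the bar (cyclically of order six) can survive. Combined with the vanishing pattern already established in Lemma \ref{heee}, this leaves exactly the five underlying index triples $\{1,2,3\}$, $\{3,4,5\}$, $\{1,5,6\}$, $\{2,4,6\}$, $\{1,3,5\}$, each now decorated by a choice of bars. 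For each triple I would run through the eight bar-configurations, discard the ones that vanish, and read off value and sign from the formula above, finally symmetrizing so as to list only the representatives with $i\le j\le k$.

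The main obstacle is organizational rather than conceptual: matching Miyaoka's barred/unbarred indexing and cyclic conventions to ours, tracking the many Christoffel symbols, and simplifying the products of cotangent values and scaling factors without sign errors. Here the symmetry of $\alpha$ in all three arguments (Lemma \ref{van}) and the Symmetry identities $(\tau_j)_*\alpha=-\alpha$ of Theorem \ref{glob} furnish built-in consistency checks, since every tabulated entry must be invariant under permuting its arguments and must transform correctly under the reflections $\tau_j$; I would use these to validate the computation. I expect the final simplification to collapse uniformly to $\pm\sqrt{3/2}$ and $\pm 2\sqrt{3/2}$, precisely as in Lemma \ref{heee}, with the bars merely redistributing the signs. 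The one genuinely new structural feature to verify is that, for each of the five triples, a component is nonzero exactly when an odd number of its indices are barred, which is the pattern recorded in the statement.
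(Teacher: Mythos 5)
Your proposal is correct and follows essentially the same route as the paper: the paper's proof consists precisely of inverting $\Lambda_{i,j}^{k}=\frac{1}{\lambda^0_j-\lambda^0_k}\,\alpha(f_i,f_j,f_k)$, substituting Miyaoka's Christoffel symbols from \cite{mi4}, and rescaling to the $\hat g$-orthonormal frame $e_i=\sqrt{2(1+(\lambda_i^0)^2)^{-1}}\,f_i$, exactly as you describe (and your principal curvatures and scaling factors $(\sqrt3-1)/2$, $1$, $(\sqrt3+1)/2$ are correct). The only caution is that the list of surviving underlying triples should be read off from the data in \cite{mi4} rather than imported from the $(g,m)=(6,1)$ case of Lemma \ref{heee}, but since you substitute the actual Christoffel symbols anyway, this is organizational rather than a gap.
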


We used the above results to guess equivalent formulations for homogeneity. 
The following subsection contains our results of this procedure.

\subsection{Equivalent formulations of homogeneity}
In this subsection we provide several equivalent formulations of homogeneity.
Throughout this subsection let $X,Y,Z\in\Gamma(TM)$ and $i\in\left\{1,2,3\right\}$.

\smallskip

For proving an extended version of Theorem\,B we need two preparatory lemmas.

\begin{lemma}
\label{curvten}
Let $g=6$. For each $i\in\left\{1,...,6\right\}$ the identity
\begin{align*}
R(\pi_iX,\pi_{i+3}Y,\pi_{i+3}Y,\pi_iX)=\tfrac{1}{4}\tr_{\hat{g}}(\alpha(\pi_iX,\pi_{i+3}Y,\,\cdot\,)^2)
\end{align*}
holds, the index of the projections is interpreted to be cyclic of order $6$.
\end{lemma}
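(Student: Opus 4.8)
The plan is to compute $R(\pi_iX,\pi_{i+3}Y,\pi_{i+3}Y,\pi_iX)$ using the Gauss equation for the Lagrangian submanifold established in Proposition\,\ref{cod}, which expresses
\begin{align*}
R=\hat{g}\owedge\hat{g}+b\owedge\overline{b}+\tfrac{1}{4}\alpha\owedge_{\hat{g}}\alpha.
\end{align*}
I would evaluate each of the three Kulkarni--Nomizu terms separately on the arguments $(\pi_iX,\pi_{i+3}Y,\pi_{i+3}Y,\pi_iX)$ and argue that only the $\tfrac{1}{4}\alpha\owedge_{\hat{g}}\alpha$ term survives, reducing to the claimed trace expression.

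First I would handle the $\hat{g}\owedge\hat{g}$ term. Since the eigendistributions $D_i$ and $D_{i+3}$ of the shape operator $A_0$ correspond to distinct principal curvatures, they are $\hat{g}$-orthogonal; hence $\hat{g}(\pi_iX,\pi_{i+3}Y)=0$ and $\hat{g}(\pi_iX,\pi_{i+3}Y)=0$ force the diagonal-type products in $\hat{g}\owedge\hat{g}(\pi_iX,\pi_{i+3}Y,\pi_{i+3}Y,\pi_iX)$ to cancel against the same-distribution products, so this term vanishes. Next, for the $b\owedge\overline{b}$ term I would use that $\pi_iX$ and $\pi_{i+3}Y$ are eigenvectors of $B_0$ with eigenvalues $\mu_i^0=e^{2i\theta_i}$ and $\mu_{i+3}^0=e^{2i\theta_{i+3}}$ respectively (Lemma\,\ref{bthe}); since $b(U,V)=\hat{g}(B_0U,V)$ and $\overline{b}(U,V)=\hat{g}(B_0^{-1}U,V)$ are proportional to $\hat{g}(U,V)$ on these eigenspaces, the orthogonality again annihilates every term of $b\owedge\overline{b}$ on these arguments. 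This is the step where the specific index shift $i\mapsto i+3$, and the cyclic-of-order-$6$ structure of $\theta_k=\phi+(k-1)\pi/g$, need to be tracked carefully, though the underlying mechanism is just orthogonality of distinct eigenspaces.

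What remains is $\tfrac{1}{4}\alpha\owedge_{\hat{g}}\alpha(\pi_iX,\pi_{i+3}Y,\pi_{i+3}Y,\pi_iX)$. Expanding the definition of $\owedge_{\hat{g}}$ for $\alpha$,
\begin{align*}
\alpha\owedge_{\hat{g}}\alpha(U,V,V,U)=\tr_{\hat{g}}\big\{\alpha(U,U,\,\cdot\,)\alpha(V,V,\,\cdot\,)-\alpha(U,V,\,\cdot\,)\alpha(V,U,\,\cdot\,)\big\},
\end{align*}
with $U=\pi_iX$, $V=\pi_{i+3}Y$. By Lemma\,\ref{van}, $\alpha$ vanishes whenever two entries lie in the same distribution, so $\alpha(\pi_iX,\pi_iX,\,\cdot\,)=0$, killing the first product; and by symmetry of $\alpha$ the second product is exactly $\alpha(\pi_iX,\pi_{i+3}Y,\,\cdot\,)^2$. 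This yields $R(\pi_iX,\pi_{i+3}Y,\pi_{i+3}Y,\pi_iX)=-\tfrac{1}{4}\tr_{\hat{g}}\alpha(\pi_iX,\pi_{i+3}Y,\,\cdot\,)^2$, up to the sign convention in the definition of $\owedge_{\hat{g}}$; I would reconcile the sign against the stated identity by checking the ordering of arguments in the Kulkarni--Nomizu product as written in the paper.

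The main obstacle I anticipate is bookkeeping rather than conceptual: making sure the vanishing of the $\hat{g}\owedge\hat{g}$ and $b\owedge\overline{b}$ terms is fully justified, since these products mix the four slots and one must verify that no cross term survives once $\hat{g}(\pi_iX,\pi_{i+3}Y)=0$ is imposed. The cleanest route is to observe that for the specific argument pattern $(U,V,V,U)$ with $\hat{g}(U,V)=0$ and $U,V$ eigenvectors of $B_0$, each Kulkarni--Nomizu factor is a product of two inner products at least one of which pairs $U$ with $V$ and hence vanishes; once this is verified, the reduction to the $\alpha\owedge_{\hat{g}}\alpha$ term is immediate and the result follows.
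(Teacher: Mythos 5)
Your overall route is the same as the paper's: evaluate the Gauss equation of Proposition\,\ref{cod} on $(\pi_iX,\pi_{i+3}Y,\pi_{i+3}Y,\pi_iX)$, kill the two metric terms, and identify the surviving $\tfrac14\,\alpha\owedge_{\hat g}\alpha$ term via Lemma\,\ref{van} and the symmetry of $\alpha$. The $\alpha$-part of your argument is correct. However, your justification for discarding $\hat g\owedge\hat g$ and $b\owedge\overline b$ is wrong, and this is a genuine gap, not bookkeeping. Write $U=\pi_iX$, $V=\pi_{i+3}Y$. For the argument pattern $(U,V,V,U)$ the paper's Kulkarni--Nomizu product reads
\begin{align*}
h_1\owedge h_2(U,V,V,U)=\tfrac12\bigl(h_1(U,U)h_2(V,V)+h_2(U,U)h_1(V,V)\bigr)-\tfrac12\bigl(h_1(U,V)h_2(V,U)+h_2(U,V)h_1(V,U)\bigr),
\end{align*}
and the $\hat g$-orthogonality of $D_i$ and $D_{i+3}$ kills only the second group; the diagonal products pair $U$ with $U$ and $V$ with $V$. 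So your claim that every factor contains an inner product pairing $U$ with $V$ is false, and neither term vanishes on its own: one has $\hat g\owedge\hat g(U,V,V,U)=\hat g(U,U)\,\hat g(V,V)$, while $B_0U=\mu_i U$ and $B_0V=\mu_{i+3}V$ with $\mu_k=e^{2i\theta_k}$ give
\begin{align*}
b\owedge\overline b(U,V,V,U)=\operatorname{Re}\bigl(\mu_i\,\overline{\mu_{i+3}}\bigr)\,\hat g(U,U)\,\hat g(V,V)=\cos\bigl(2(\theta_i-\theta_{i+3})\bigr)\,\hat g(U,U)\,\hat g(V,V).
\end{align*}
The vanishing the lemma needs is of the \emph{sum}: for $g=6$ the shift by three gives $\theta_{i+3}-\theta_i=\pi/2$, hence $\mu_{i+3}=-\mu_i$ and $\cos\bigl(2(\theta_i-\theta_{i+3})\bigr)=-1$, so the two diagonal contributions cancel against each other. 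This eigenvalue computation is the actual content of the paper's ``easy calculation'', and it is emphatically not ``just orthogonality of distinct eigenspaces'': for a pair $(i,j)$ with $j-i\not\equiv 3$ modulo $6$ the sum leaves the nonzero residue $\bigl(1+\cos(2(\theta_i-\theta_j))\bigr)\hat g(U,U)\,\hat g(V,V)$, so an argument using only orthogonality would prove the identity for all pairs of curvature distributions, which is false.

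On the sign: with the paper's definition of $\owedge_{\hat g}$ the surviving term is $-\tfrac14\tr_{\hat g}\bigl(\alpha(U,V,\cdot)^2\bigr)$, exactly as you computed, whereas the lemma states $+\tfrac14$; this discrepancy lies in the paper's conventions, not in your reduction, and since $\alpha$ is real either sign yields the equivalence actually used in Theorem\,\ref{theoremzus} (the sectional curvature vanishes iff $\alpha(\pi_iX,\pi_{i+3}Y,\cdot)=0$). So flagging and deferring the sign is acceptable; the cancellation between $\hat g\owedge\hat g$ and $b\owedge\overline b$ is the essential point you must supply.
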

\begin{proof}
An easy calculation yields $(\hat{g}\owedge \hat{g}+b\owedge\overline{b})(\pi_iX,\pi_{i+3}Y,\pi_{i+3}Y,\pi_iX)=0$
and thus the claim follows from Proposition\,\ref{cod}.
\end{proof}

\begin{lemma}
\label{dist}
Let $j\in\left\{1,...,6\right\}$.
For each vector field $Z\in\Gamma(TM)$ introduce the vector field  $\hat{Z}:=\pi_{\lower 2pt\hbox{$\scriptstyle{D_j\oplus D_{j+3}}$}}Z$.
The direct sum $D_j\oplus D_{j+3}$ is integrable if and only if
\begin{align}
\label{inte}
(\nabla_{\hat{X}}B_{\theta_j}^2)\hat{Y}-(\nabla_{\hat{Y}}B_{\theta_j}^2)\hat{X}=0
\end{align}
holds for all $X,Y\in\Gamma(TM).$
\end{lemma}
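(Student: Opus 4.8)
The plan is to reduce the integrability of $D_j\oplus D_{j+3}$ to a Frobenius bracket condition and then recognize the displayed identity (\ref{inte}) as exactly that condition, by exploiting that $D_j\oplus D_{j+3}$ is precisely the $+1$-eigenbundle of $B_{\theta_j}^2$. First I would record the spectrum of $B_{\theta_j}^2$. By Lemma\,\ref{bthe}, every $X\in D_k$ satisfies $B_{\theta_j}X=e^{2i(\theta_k-\theta_j)}X=e^{i(k-j)\pi/3}X$, hence $B_{\theta_j}^2X=e^{2i(k-j)\pi/3}X$. Writing $\zeta=e^{2\pi i/3}$ and running $k-j$ through $\Z_6$, the eigenvalue is $1$ exactly on $D_j\oplus D_{j+3}$, equals $\zeta$ on $D_{j+1}\oplus D_{j+4}$, and equals $\zeta^2$ on $D_{j+2}\oplus D_{j+5}$. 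In particular $\ker(\id-B_{\theta_j}^2)=D_j\oplus D_{j+3}$, while $\id-B_{\theta_j}^2$ is invertible on the complementary distribution (there the eigenvalues $1-\zeta,\,1-\zeta^2$ are nonzero). By construction $\hat Z=\pi_{D_j\oplus D_{j+3}}Z$ is a section of this $+1$-eigenbundle, so $B_{\theta_j}^2\hat Z=\hat Z$.

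The key algebraic step is then a one-line application of the Leibniz rule for the ($\C$-linearly extended) Levi-Civita connection $\nabla$ of $\hat g$. Since $B_{\theta_j}^2\hat Y=\hat Y$,
\[
(\nabla_{\hat X}B_{\theta_j}^2)\hat Y=\nabla_{\hat X}(B_{\theta_j}^2\hat Y)-B_{\theta_j}^2\nabla_{\hat X}\hat Y=(\id-B_{\theta_j}^2)\nabla_{\hat X}\hat Y ,
\]
and the same with $\hat X,\hat Y$ interchanged. Antisymmetrizing and using that $\nabla$ is torsion free, i.e.\ $\nabla_{\hat X}\hat Y-\nabla_{\hat Y}\hat X=[\hat X,\hat Y]$, I obtain
\[
(\nabla_{\hat X}B_{\theta_j}^2)\hat Y-(\nabla_{\hat Y}B_{\theta_j}^2)\hat X=(\id-B_{\theta_j}^2)[\hat X,\hat Y].
\]
The point worth emphasizing is that $\nabla_{\hat X}\hat Y$ need not lie in $D_j\oplus D_{j+3}$, which is exactly why the right-hand side can be nonzero.

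From here the equivalence is immediate. Because $\ker(\id-B_{\theta_j}^2)=D_j\oplus D_{j+3}$, the right-hand side vanishes if and only if $[\hat X,\hat Y]\in D_j\oplus D_{j+3}$. Quantifying over all $X,Y\in\Gamma(TM)$ is the same as quantifying over all sections $\hat X,\hat Y$ of $D_j\oplus D_{j+3}$ (the projection $\pi_{D_j\oplus D_{j+3}}$ is the identity on its own image). Hence (\ref{inte}) holds identically precisely when the bracket of any two sections of $D_j\oplus D_{j+3}$ again lies in $D_j\oplus D_{j+3}$, which by the Frobenius theorem is the integrability of $D_j\oplus D_{j+3}$.

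The computations are all routine; the only place I would write out with care is the reality bookkeeping in the last step. The operator $\id-B_{\theta_j}^2$ is a complex endomorphism of $TM\otimes\C$, but its kernel is the complexification of the \emph{real} distribution $D_j\oplus D_{j+3}$. Consequently, for the real vector field $[\hat X,\hat Y]$ the complex equation $(\id-B_{\theta_j}^2)[\hat X,\hat Y]=0$ is equivalent to the real membership $[\hat X,\hat Y]\in D_j\oplus D_{j+3}$, since a real vector lies in $(D_j\oplus D_{j+3})\otimes\C$ if and only if it already lies in $D_j\oplus D_{j+3}$. This is the step that closes the argument, and the one I would spell out explicitly.
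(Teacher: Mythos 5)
Your proof is correct and follows essentially the same route as the paper: both arguments rest on the fact that $\hat Z$ lies in the $+1$-eigenbundle of $B_{\theta_j}^2$, apply the Leibniz rule to get $(\nabla_{\hat X}B_{\theta_j}^2)\hat Y=(\eins-B_{\theta_j}^2)\nabla_{\hat X}\hat Y$, antisymmetrize to produce $(\eins-B_{\theta_j}^2)[\hat X,\hat Y]$, and conclude via $\ker(\eins-B_{\theta_j}^2)=D_j\oplus D_{j+3}$ and Frobenius. The only cosmetic difference is that you verify the eigenbundle structure by computing the spectrum of $B_{\theta_j}^2$ from Lemma\,\ref{bthe}, whereas the paper invokes the projection formula $\pi_{D_j\oplus D_{j+3}}=\frac{1}{3}(\eins+B^{2}_{\theta_j}+B^{4}_{\theta_j})$ together with $B_{\theta_j}^6=\eins$; your closing remark on reality is a welcome clarification of a point the paper leaves implicit.
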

\begin{proof}
By Section \ref{spro} we obtain
\begin{align*}
\pi_{\lower 2pt\hbox{$\scriptstyle{D_j\oplus D_{j+3}}$}}=\tfrac{1}{3}(\eins+B^{2}_{\theta_j}+B^{4}_{\theta_j}),
\end{align*}
for $j\in\left\{1,...,6\right\}$. 
Hence, using $B_{\theta_j}^6=\eins$ we get the identity $(B_{\theta_j}^2-\eins)\hat{Y}=0$
and thus
\begin{align*}
(\nabla_{\hat{X}}B_{\theta_j}^2)\hat{Y}=(\eins-B_{\theta_j}^2)\nabla_{\hat{X}}\hat{Y}.
\end{align*}
By interchanging the roles of $X$ and $Y$ and subtracting the resulting equation from the preceding equation we obtain 
\begin{align*}
(\nabla_{\hat{X}}B_{\theta_j}^2)\hat{Y}-(\nabla_{\hat{Y}}B_{\theta_j}^2)\hat{X}=(\eins-B_{\theta_j}^2)\big(\nabla_{\hat{X}}\hat{Y}-\nabla_{\hat{Y}}\hat{X}\big)=(\eins-B_{\theta_j}^2)\left[\hat{X},\hat{Y}\right].
\end{align*}
By definition of $B_t$ we get $(\eins-B_{\theta_j}^2)Z=0$ if and only if $Z\in D_j\oplus D_{j+3}$. Combining this with the previous identity yields the desired result.
\end{proof}

In the next theorem we finally provide several equivalent formulations for homogeneity of isoparametric hypersurfaces in spheres with $g=6$.

\begin{theorem}
\label{theoremzus}
Each of the following statements is equivalent to the homogeneity of isoparametric hypersurfaces in spheres with $g=6$.
\begin{enumerate}
\renewcommand{\labelenumi}{(\roman{enumi})}
\item For each $i\in\left\{1,...,6\right\}$ we have
\begin{align*}
\alpha(\pi_iX,\pi_{i+3}Y,Z)=0.
\end{align*}	
\item\label{1} For $i,j,k\in\lbrace 1,...,6\rbrace$ with $j+k+l\not\equiv 0\,\,\mbox{modulo}\,\,3$ we have
\begin{align*}
\alpha(\pi_jX,\pi_{k}Y,\pi_{l}Z)=0.
\end{align*}
\item The following identity is satisfied
\begin{align*}
\sum_{j=0}^5{(-1)^j\alpha(B^jX,B^{-j}Y,Z)}=0.
\end{align*}
\item The following sectional curvatures 
of $(M,\hat{g})$ vanish:
\begin{align*}
R(\pi_iX,\pi_{i+3}Y,\pi_{i+3}Y,\pi_iX),\,i\in\left\{1,...,6\right\}.
\end{align*}
\item For $j\in\left\{1,...,6\right\}$ the direct sum $D_j\oplus D_{j+3}$ is integrable.
\item The kernel of each 
linear isospectral family $L(s)$ is independent of $s\in\R$. 
\end{enumerate}
\end{theorem}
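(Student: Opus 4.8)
The plan is to take statement (i) as a hub, prove the block of internal equivalences (i)$\Leftrightarrow$(ii)$\Leftrightarrow$(iii)$\Leftrightarrow$(iv)$\Leftrightarrow$(v) using only the structural lemmas already available, and then connect this block to homogeneity through (vi), which is the genuinely hard step. The equivalence (i)$\Leftrightarrow$(iv) is immediate from Lemma \ref{curvten}: the right-hand side $\tfrac14\tr_{\hat g}(\alpha(\pi_iX,\pi_{i+3}Y,\cdot)^2)$ is a sum of squares of real numbers, so it vanishes for all $X,Y$ exactly when $\alpha(\pi_iX,\pi_{i+3}Y,\cdot)=0$. For (i)$\Leftrightarrow$(iii) I would diagonalize $B_0$, whose eigenspace on $D_a$ is $e^{2i\theta_a}$ by Lemma \ref{bthe}. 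Expanding $X=\sum_a\pi_aX$, $Y=\sum_b\pi_bY$ turns $\sum_{j=0}^5(-1)^j\alpha(B_0^jX,B_0^{-j}Y,Z)$ into $\sum_{a,b}c_{ab}\,\alpha(\pi_aX,\pi_bY,Z)$ with $c_{ab}=\sum_{j=0}^5(-e^{2i(\theta_a-\theta_b)})^j$; using $2\theta_a=(2a-1)\pi/6$ this geometric sum equals $6$ when $a\equiv b+3\pmod 6$ and $0$ otherwise, so (iii) reduces to $\sum_b\alpha(\pi_{b+3}X,\pi_bY,Z)=0$, and projecting $X,Y$ onto single distributions isolates each summand, which is precisely (i).

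For (i)$\Leftrightarrow$(ii) the direction (ii)$\Rightarrow$(i) is immediate: the component $\alpha(\pi_iX,\pi_{i+3}Y,\pi_lZ)$ either has two entries in a common distribution (when $l\equiv i\pmod 3$) and dies by Lemma \ref{van}, or has index sum $i+(i+3)+l\not\equiv0\pmod 3$ and dies by (ii). For (i)$\Rightarrow$(ii) I would use the combinatorial observation that the three pairs $\{a,a+3\}$ partition $\Z_6$ into the residue classes $1,2,0$ modulo $3$, so any triple of \emph{distinct} indices none two of which differ by $3$ must pick one element from each pair and hence has sum $\equiv 1+2+0\equiv0\pmod 3$. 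Contrapositively, a distinct triple with sum $\not\equiv0$ has two indices differing by $3$, and (i) kills it (triples with a repeated index die by Lemma \ref{van}). This reduces (ii) to (i).

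The equivalence (i)$\Leftrightarrow$(v) is the most computational. I would feed Lemma \ref{ableitungb} into Lemma \ref{dist}: the product rule together with the $\hat g$-symmetry of $B_{\theta_j}$ expresses $\hat g((\nabla_XB_{\theta_j}^2)Y,Z)$ through $\alpha$, and on $D_j\oplus D_{j+3}$ one has $B_{\theta_j}^2=\eins$ and $B_{\theta_j}=\pi_j-\pi_{j+3}$. Antisymmetrizing in $X,Y$, the symmetric terms cancel by symmetry of $\alpha$, and I expect to arrive at
\[
\hat g\big((\nabla_{\hat X}B_{\theta_j}^2)\hat Y-(\nabla_{\hat Y}B_{\theta_j}^2)\hat X,Z\big)=-2i\big(\alpha(\pi_{j+3}X,\pi_jY,B_{\theta_j}Z)-\alpha(\pi_jX,\pi_{j+3}Y,B_{\theta_j}Z)\big).
\]
Since $B_{\theta_j}$ is invertible, (v) becomes $\alpha(\pi_{j+3}X,\pi_jY,W)=\alpha(\pi_jX,\pi_{j+3}Y,W)$ for all $X,Y,W$; restricting to $X$ with $\pi_jX=0$ forces the right-hand side to vanish, whence $\alpha(\pi_{j+3}X,\pi_jY,\cdot)=0$, which is (i), and the converse direction is clear.

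The hard part will be tying this block of algebraic conditions to homogeneity. Since the equivalence of homogeneity with (vi), the constancy of $\ker L(s)$, is supplied by \cite{dn},\cite{mi2}, it suffices to prove (vi)$\Leftrightarrow$(i). My plan is to identify $\ker L(s)$ with the zero-eigenspace of the focal shape operator, which corresponds to the summand $D_{j+3}$, and to express the infinitesimal motion of $\ker L(s)$ through the off-diagonal block of $\dot L(s)$ coupling $D_{j+3}$ to its $\hat g$-complement; by the translation underlying Proposition \ref{l1} this block is assembled from the components $\alpha(\pi_j\cdot,\pi_{j+3}\cdot,\cdot)$, so constancy of the kernel for all $s$ should be equivalent to their vanishing, i.e.\ to (i). The delicate points are treating all directions $s$ and both focal manifolds simultaneously — where the Symmetry identities of Theorem \ref{glob} should let one transport the statement between the two focal submanifolds — and verifying that no lower-order obstruction is overlooked; as a consistency check and for the easy implication one can read off directly from Lemmas \ref{heee} and \ref{heee2} that the two known homogeneous families satisfy (i), since every nonvanishing component listed there has its three indices in distinct residue pairs.
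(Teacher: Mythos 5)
Your proposal is correct and takes essentially the same route as the paper: homogeneity is reduced to (vi) via \cite{dn},\cite{mi2}, statement (i) serves as the hub, and each internal equivalence is established with the same tools (Lemma \ref{curvten} for (iv), the eigenstructure of $B_0$ from Lemmas \ref{bthe} and \ref{pro} for (ii) and (iii), Lemmas \ref{ableitungb} and \ref{dist} for (v), and the explicit family $L(s)$ of Proposition \ref{l1} for (vi)). The only cosmetic differences are that you expand in eigenvectors of $B_0$ with a geometric-sum evaluation where the paper sums the projector identity of Lemma \ref{pro}, and your sketch of (vi)$\Leftrightarrow$(i) is at the same level of detail as the paper's own.
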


\begin{proof}
It is well-known that the sixth statement is equivalent to the homogeneity of isoparametric hypersurfaces in spheres with $g=6$ \cite{dn, mi2}.
Hence it is sufficient to prove that the six statements are equivalent to each other.
\begin{itemize}
\item $(i)\Rightarrow (ii)$. 
Lemma\,\ref{van} implies that 
$\alpha(\pi_jX,\pi_{k}Y,\pi_{l}Z)\neq 0$
can only hold if $(j,k,l)=(n,n+2,n+4)$ or $(j,k,l)=(n,n+1,n+2)$, up to a permutation of $n\in\{1,\ldots,6\}$. 
Since in these cases the equation $i+j+k=0$ holds modulo $3$, the claim is proved.
\item $(ii)\Rightarrow (i)$. Choose $j=i$ and $k=i+3$ for some $i\in\left\{1,...,6\right\}.$
Then
$$\alpha(\pi_iX,\pi_{i+3}Y,\pi_{l}Z)=0$$
unless $l\in\left\{i,i+3\right\}$.
For $l\in\left\{i,i+3\right\}$ the vanishing follows from Lemma\,\ref{van}.
\item $(i)\Rightarrow (iii)$. 
Using Lemma\,\ref{pro} we get
\begin{align*}
\sum_{\ell=1}^6{\pi_{\ell}\otimes\pi_{\ell+3}}=\tfrac{1}{6^2}\sum_{\ell=0}^5{\sum_{k,j=0}^5{B_{\theta_{\ell}}^k\otimes B_{\theta_{\ell+3}}^j}}=\tfrac{1}{6^2}\sum_{\ell=0}^5{\sum_{k,j=0}^5{(-1)^jB_{\theta_{\ell}}^k\otimes B_{\theta_{\ell}}^j}},
\end{align*}
where we made use of the last identity of Lemma\,\ref{bthe} and $\theta_{\ell+3}=\theta_{\ell}+\frac{\pi}{2}$ to obtain the last equality.
By Lemma\,\ref{bthe} again we obtain
\begin{align*}
\tfrac{1}{6^2}\sum_{\ell=0}^5{\sum_{k,j=0}^5{(-1)^jB_{\theta_{\ell}}^k\otimes B_{\theta_{\ell}}^j}}=
\tfrac{1}{6^2}\sum_{\ell=0}^5{\sum_{k,j=0}^5{(-1)^j\xi^{(j+k)\ell}B_{-\pi/12}^k \otimes B_{-\pi/12}^j}},
\end{align*}
where we introduced $\xi=e^{-i\frac{\pi}{3}}$.
Thus we get
\begin{align*}
\tfrac{1}{6^2}\sum_{\ell=0}^5{\sum_{k,j=0}^5{(-1)^j\xi^{(j+k)\ell}B_{-\pi/12}^k \otimes B_{-\pi/12}^j}}=
\tfrac{1}{6}\sum_{j=0}^5{(-1)^jB_0^j \otimes B_0^{-j}},
\end{align*}
where we made use of Lemma\,\ref{bthe} to get the last equality.
Combined we get
\begin{align*}
\sum_{\ell=1}^6{\pi_{\ell}\otimes\pi_{\ell+3}}=\tfrac{1}{6}\sum_{j=0}^5{(-1)^jB_0^j \otimes B_0^{-j}}.
\end{align*}
Hence $(i)$ implies
\begin{align}
\label{summe}
\tfrac{1}{6}\sum_{j=0}^5{(-1)^j\alpha(B_0^jX,B_0^{-j}Y,Z)}=\sum_{i=1}^6{\alpha(\pi_i^{t}X,\pi_{i+3}^{t}Y,Z)}=0.
\end{align}

\item $(iii)\Rightarrow (i)$. Let $1\leq k\leq 6$ be given. Substitute  
$X=\pi_kX_1$ and $Y=\pi_{k+3}Y_1$, $X_1,Y_1\in\Gamma(TM)$, in equation (\ref{summe}) and use $\pi_i\circ\pi_j=\delta_{i,j}$
for any $i,j\in\lbrace1,..,6\rbrace$.

\item $(i)\Leftrightarrow (iv)$. 
 One direction is immediate from Lemma\,\ref{curvten} and the other follows from the fact that $\alpha$ is real.
\item $(i)\Rightarrow (v)$. Equation (\ref{inte}) is equivalent to the statement $$\hat{g}((\nabla_{\hat{X}}B_{\theta_j}^2)\hat{Y}-(\nabla_{\hat{Y}}B_{\theta_j}^2)\hat{X},Z)=0$$ for all $X,Y,Z\in\Gamma(TM)$. Making use of Lemma\,\ref{ableitungb} and the equalities $B_{\theta_j}\pi_j X=\pi_j X$ and $B_{\theta_j}\pi_{j+3} X=-\pi_{j+3} X$ one finds that the preceding equation is equivalent to
\begin{align*}
\alpha(\hat{X},B_{\theta_j}\hat{Y},B_{\theta_j}Z)-\alpha(B_{\theta_j}\hat{X},\hat{Y}, B_{\theta_j}Z)=0.
\end{align*}
This equation is satisfied since 
\begin{align*}
\alpha((\pi_i+\pi_{i+3})\otimes(\pi_i+\pi_{i+3})\otimes\eins)=0
\end{align*}
holds by $(ii)$ and Lemma\,\ref{van}.
\item $(v)\Rightarrow (i)$.  
Making again use of the equations $B_{\theta_j}\pi_j X=\pi_j X$ and $B_{\theta_j}\pi_{j+3} X=-\pi_{j+3} X$  one verifies easily that 
equation (\ref{inte}) is equivalent to
\begin{align*}
\alpha(\pi_jX,\pi_{j+3}Y,Z)-\alpha(\pi_{j+3}X,\pi_{j}Y,Z)=0\hspace{0.5cm}\mbox{for all}\hspace{0.2cm} Z\in\Gamma(TM).
\end{align*}
Applying this equation to $X=\pi_jX_1$ and $Y=\pi_{j+3}Y_1$, for arbitrary $X_1,Y_1\in\Gamma(TM)$, yields the claim.
\item $(vi)\Rightarrow (i)$. 
We first assume $m=1$.
In the proof of Theorem\,\ref{l1}, we described the linear isospectral family $L(s)=\cos(s)L_0+\sin(s)L_1$, $s\in\R$, of the focal submanifold $F_{\theta_6}$ for the case $m=1$ in terms of $\alpha_{i\,j\,\,k}$, see equation (\ref{l1inalpha}).
Clearly, the kernel of $L_0$ is given by $e_3=(0,0,1,0,0)^{tr}$.
Hence, the constancy of the kernel of $L(s)$ implies that the kernel of $L_1$ is also given by $e_3$.
By (\ref{l1inalpha}), this is equivalent to the identity $\alpha(\pi_3X,\pi_6Y,Z)=0$.
Carrying out analogous considerations for $F_{\theta_j}, j\in\lbrace 1,...,5\rbrace$, we finally get $\alpha(\pi_iX,\pi_{i+3}Y,Z)=0$ for all $i\in\lbrace 1,...,6\rbrace$.\\
The case $m=2$ is proved analogously. Indeed, consider again the linear isospectral family $L(s,t)=\cos(s)L_0+\sin(s)(\cos(t)L_1+\sin(t)L_2)$, $s,t\in\R$, of the focal submanifold $F_{\theta_6}$ in terms of $\alpha_{i\,j\,\,k}$.
Here we have 
\begin{align*}
L_0=\mbox{Diag}(\sqrt{3},\tfrac{1}{\sqrt{3}},0,-\tfrac{1}{\sqrt{3}},-\sqrt{3})\otimes\eins_2,
\end{align*}
\begin{align*}
L_1=\left(\begin{smallmatrix}
0_2&\sqrt{\frac{2}{3}}\,A_{1\,2}&\frac{1}{\sqrt{2}}\,A_{1\,3}&\sqrt{\frac{2}{3}}\,A_{1\,4}&\sqrt{2}\,A_{1\,5}\\
\sqrt{\frac{2}{3}}\,A_{2\,1}&0_2&\frac{1}{\sqrt{6}}\,A_{2\,3}&\frac{\sqrt{2}}{3}\,A_{2\,4}&\sqrt{\frac{2}{3}}\,A_{2\,5}\\
\frac{1}{\sqrt{2}}\,A_{3\,1}&\frac{1}{\sqrt{6}}\,A_{3\,2}&0_2&\frac{1}{\sqrt{6}}\,A_{3\,4}&\frac{1}{\sqrt{2}}\,A_{3\,5}\\
\sqrt{\frac{2}{3}}\,A_{4\,1}&\frac{\sqrt{2}}{3}\,A_{4\,2}&\frac{1}{\sqrt{6}}\,A_{4\,3}&0_2&\sqrt{\frac{2}{3}}\,A_{4\,5}\\
\sqrt{2}\,A_{5\,1}&\sqrt{\frac{2}{3}}\,A_{5\,2}&\frac{1}{\sqrt{2}}\,A_{5\,3}&\sqrt{\frac{2}{3}}\,A_{5\,4}&0_2
\end{smallmatrix} \right),
\end{align*}
\begin{align*}
L_2=\left(\begin{smallmatrix}
0_2&\sqrt{\frac{2}{3}}\,B_{1\,2}&\frac{1}{\sqrt{2}}\,B_{1\,3}&\sqrt{\frac{2}{3}}\,B_{1\,4}&\sqrt{2}\,B_{1\,5}\\
\sqrt{\frac{2}{3}}\,B_{2\,1}&0_2&\frac{1}{\sqrt{6}}\,B_{2\,3}&\frac{\sqrt{2}}{3}\,B_{2\,4}&\sqrt{\frac{2}{3}}\,B_{2\,5}\\
\frac{1}{\sqrt{2}}\,B_{3\,1}&\frac{1}{\sqrt{6}}\,B_{3\,2}&0_2&\frac{1}{\sqrt{6}}\,B_{3\,4}&\frac{1}{\sqrt{2}}\,B_{3\,5}\\
\sqrt{\frac{2}{3}}\,B_{4\,1}&\frac{\sqrt{2}}{3}\,B_{4\,2}&\frac{1}{\sqrt{6}}\,B_{4\,3}&0_2&\sqrt{\frac{2}{3}}\,B_{4\,5}\\
\sqrt{2}\,B_{5\,1}&\sqrt{\frac{2}{3}}\,B_{5\,2}&\frac{1}{\sqrt{2}}\,B_{5\,3}&\sqrt{\frac{2}{3}}\,B_{5\,4}&0_2
\end{smallmatrix} \right),
\end{align*}
where 
\begin{align*}A_{i\,j}=\left(\begin{array}{cc}
\alpha_{i\,j\,6}&\alpha_{i\,\overline{j}\,6}\\
\alpha_{\overline{i}\,j\,6}&\alpha_{\overline{i}\,\overline{j}\,6}\\
\end{array}\right)\quad\mbox{and}\quad B_{i\,j}=\left(\begin{array}{cc}
\alpha_{i\,j\,\overline{6}}&\alpha_{i\,\overline{j}\,\overline{6}}\\
\alpha_{\overline{i}\,j\,\overline{6}}&\alpha_{\overline{i}\,\overline{j}\,\overline{6}}\\
\end{array}\right).
\end{align*}
As in the case $m=1$, the constancy of the kernel of $L(s,t)$ implies the identity $\alpha(\pi_3X,\pi_6Y,Z)=0$.
Again, the claim is established by carrying out analogous considerations for $F_{\theta_j}, j\in\lbrace 1,...,5\rbrace$.
\item $(i)\Rightarrow (vi)$.  Let us first consider the case $m=1$.
Since $\alpha(\pi_iX,\pi_{i+3},Y,Z)=0$ for all $i\in\lbrace 1,...,6\rbrace$, all entries of the third row (and thus the third column) of $L(s)$ are $0$. Thus $e_3$ (the third vector of the standard basis in $\R^5$) lays in the kernel of $L(s)$ for all $s\in\R$.
Since the kernel of $L(s)$ is one dimensional, the claim is established.\\
Next, suppose $m=2$. Since $\alpha(\pi_iX,\pi_{i+3},Y,Z)=0$ for all $i\in\lbrace 1,...,6\rbrace$, all entries of the fifth and the sixth row (and thus the fifth and the sixth column) of $L(s,t)$ are $0$. Thus $e_5$ and $e_6$ (the fifth and sixth vector of the standard basis in $\R^{10}$) lay in the kernel of $L(s,t)$ for all $s,t\in\R$.
Since the kernel of $L(s,t)$ is two dimensional, the claim is established.
\end{itemize}
\auxqed
\end{proof}
Using the classification of isoparametric hypersurfaces in spheres with $g=6$ and $m=1$ given by Dorfmeister and Neher \cite{dn}, we obtain the following corollary.
\begin{corollary}
Assume $(g,m)=(6,1).$ For all $i\in\left\{1,...,6\right\}$ the sectional curvatures
$R(\pi_iX,\pi_{i+3}Y,\pi_{i+3}Y,\pi_iX)=0$
of $(M,\hat{g})$ vanish. 
\end{corollary}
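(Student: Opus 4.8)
The plan is to combine the curvature formula of Lemma~\ref{curvten} with the explicit components of $\alpha$ recorded in Lemma~\ref{heee}. First I would apply Lemma~\ref{curvten}, which reduces the assertion to the vanishing of $\tr_{\hat{g}}\big(\alpha(\pi_iX,\pi_{i+3}Y,\,\cdot\,)^2\big)$ for each $i\in\{1,\dots,6\}$. Since $\hat{g}$ is positive definite and $\alpha$ is real-valued, this trace is a sum of squares over a $\hat{g}$-orthonormal basis; hence it vanishes if and only if $\alpha(\pi_iX,\pi_{i+3}Y,Z)=0$ for all $Z\in\Gamma(TM)$.

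Next I would use that for $(g,m)=(6,1)$ each curvature distribution $D_i$ is one-dimensional, spanned by the unit field $e_i$. Thus $\pi_iX$ and $\pi_{i+3}Y$ are scalar multiples of $e_i$ and $e_{i+3}$, and the only quantities entering the trace are the components $\alpha_{i\,(i+3)\,k}=\alpha(e_i,e_{i+3},e_k)$. It therefore suffices to check that each of these vanishes.

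The decisive step is a short inspection of the list in Lemma~\ref{heee}. Its nonzero components are, up to the symmetry of $\alpha$, those indexed by the cyclic triples $\{1,2,3\}$, $\{3,4,5\}$, $\{1,5,6\}$, $\{2,4,6\}$ and $\{1,3,5\}$, and one checks that none of them contains a pair of indices differing by $3$ modulo $6$. Hence $\alpha_{i\,(i+3)\,k}=0$ for all $i$ and $k$, so the trace above vanishes and the corollary follows. Structurally this is no accident: a nonzero $\alpha_{i\,(i+3)\,k}$ would require its indices to satisfy $i+(i+3)+k\equiv 0$ modulo $3$, forcing $k\equiv i$ modulo $3$ and thus $k\in\{i,i+3\}$, in which case two entries lie in the same distribution and $\alpha$ already vanishes by Lemma~\ref{van}. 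The argument carries no analytic difficulty --- the only work is the combinatorial bookkeeping of the admissible triples --- and it merely confirms that the known homogeneous family with $(g,m)=(6,1)$ meets the curvature criterion of Theorem~B, a fact that also drops out of the equivalence (i)$\Leftrightarrow$(iv) of Theorem~\ref{theoremzus} once one knows the example is homogeneous.
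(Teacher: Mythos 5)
Your proof is correct, but it takes a different route from the paper's. The paper intends this corollary to follow with no computation at all: by the Dorfmeister--Neher theorem \cite{dn}, every isoparametric hypersurface with $(g,m)=(6,1)$ is homogeneous, and homogeneity is equivalent to statement (iv) of Theorem~\ref{theoremzus} (the vanishing of exactly these sectional curvatures), so the conclusion is immediate. You instead verify the vanishing by hand: Lemma~\ref{curvten} reduces the claim to $\tr_{\hat{g}}\bigl(\alpha(\pi_iX,\pi_{i+3}Y,\,\cdot\,)^2\bigr)=0$, which, being a sum of squares of real numbers over a $\hat{g}$-orthonormal basis, holds if and only if $\alpha(e_i,e_{i+3},e_k)=0$ for all $k$ (here $m=1$ makes each $D_i$ a line), and this is read off from Lemma~\ref{heee}, since none of the nonzero triples $\{1,2,3\},\{3,4,5\},\{1,5,6\},\{2,4,6\},\{1,3,5\}$ contains two indices differing by $3$. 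Your closing observation --- that nonzero components satisfy $i+j+k\equiv 0$ modulo $3$, which for $j=i+3$ forces $k\in\{i,i+3\}$ and hence vanishing already by Lemma~\ref{van} --- is a nice structural explanation of why the check succeeds. The one point you should promote from an aside to an explicit step: Lemma~\ref{heee} is stated for the \emph{homogeneous} $(6,1)$ hypersurface, whereas the corollary assumes only $(g,m)=(6,1)$, so to apply the table to an arbitrary such hypersurface you must first invoke \cite{dn}; this is the same external input the paper's one-line argument uses, so it is not an extra cost, but without citing it your argument only covers the known example. With that citation made explicit, your computational route is a legitimate alternative: it buys a concrete, self-contained confirmation (in effect re-deriving the implication homogeneity $\Rightarrow$ (i) $\Rightarrow$ (iv) of Theorem~\ref{theoremzus} in this special case), at the cost of relying on Miyaoka's Christoffel-symbol computations behind Lemma~\ref{heee}.
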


Theorem\,\ref{theoremzus} establishes a new strategy for proving homogeneity of isoparametric
surfaces in spheres with $g=6$:
we hope that a detailed study of the geometry of the Lagrangian submanifold in the complex quadric might 
be helpful.

\section*{Appendix: A counterexample to Miyaoka's proof in \cite{mi2}, \cite{mierr}.}
Let $\overline{p}$ be a point of a fixed focal submanifold $M_+$.
Without loss of generality we assume $M_+=M_6$.
Recall from Subsection \ref{weyl} that $T_{\overline{p}}M_6=\oplus_{i=1}^5D_i(p)$, where $p\in F_{\theta_j}^{-1}(\overline{p})$, and that
the normal space $\nu_{\overline{p}}M_6$ of $M_6$ at $\overline{p}$ is spanned by $\nu_{\theta_6}(p)$ and a basis of $D_j(p)$.

\smallskip

We follow the notation of Miyaoka and let $\eta_p=\nu_{\theta_6}(p)$ and $\xi_p\in D_6(p)$.
In \cite{mi2} Miyaoka introduced
\begin{align*}
E(c)=E(p,\xi_p)=\mbox{span}\{\mbox{Ker}L(t)\,\lvert\,t\in[0,2\pi)\},
\end{align*}
where $c(t)=\cos(t)\eta_p+\sin(t)\xi_p$ and $L(t)=\cos(t)A_{\eta_p}+\sin(t)A_{\xi_p}$ is an isospectral family of focal shape operators.

\smallskip

In \cite{mierr} Miyaoka introduced 
\begin{align*}
E=\mbox{span}\{ E(c)\,\lvert\,\mbox{$c$ geodesic of}\,L_6(p)\},
\end{align*}
where $L_6(p)$ denotes the leaf of $D_6$ through $p$.
Since $m=2$, the unit vector $\xi_p$ is of the form $\xi_p=\cos(s)e_6(p)+\sin(s)e_{\overline{6}}$, where the vectors $e_6(p),e_{\overline{6}}(p)$ constitute an orthonormal basis of $D_6(p)$.
Consequently, we have
$$E=\mbox{span}_{t,s}\mbox{Ker}L(t,s),$$
where $L(t,s)$ is given by
\begin{align*}
L(t,s)=&\cos(t)A_{\eta_p}+\sin(t)A_{\cos(s)e_6(p)+\sin(s)e_{\overline{6}}(p)}\\=&\cos(t)A_{\eta_p}+\sin(t)(\cos(s)A_{e_6(p)}+\sin(s)A_{e_{\overline{6}}(p)}).
\end{align*}

In Proposition 6.2 on page $8$ in the Erratum \cite{mierr}, Miyaoka claims that if $\dim E(c)=4$, then all the shape operators $L(t,s)$ map
$E$ onto $E^{\perp}$.
This statement is not correct, what is shown by the following counterexample.

\begin{counterexample*}
\label{ce}
We give an example of an isoparametric family $L(t,s)$ such that $\dim E(c)=4$ and $\dim E>4$ but
$L(t,s)$ does not map $E$ to $E^{\perp}$.

\smallskip

We use the short hand notation $A_{\eta_p}=L_0$, $A_{e_6(p)}=L_1$ and $A_{e_{\overline{6}}(p)})=L_2$. Furthermore, let 
\begin{align*}
&L_0=\mbox{diag}(\sqrt{3},\sqrt{3},1/\sqrt{3},1/\sqrt{3},0,0,-1/\sqrt{3},-1/\sqrt{3},-\sqrt{3},-\sqrt{3}),\\ 
&L_1=\tfrac{1}{\sqrt{6}}\left(\begin{smallmatrix}
0&0&0&0&3\sqrt{2}\\
0&0&1&0&0\\
0&1&0&1&0\\
0&0&1&0&0\\
3\sqrt{2}&0&0&0&0
\end{smallmatrix} \right)\otimes \eins_2,\\ 
&L_2=\tfrac{1}{\sqrt{6}}\left(\begin{smallmatrix}
0_2&0_2&0_2&0_2&-3\sqrt{2}J\\
0_2&0_2&-J&0_2&0_2\\
0_2&J&0_2&-J&0_2\\
0_2&0_2&J&0_2&0_2\\
3\sqrt{2}J&0_2&0_2&0_2&0_2
\end{smallmatrix} \right)
\end{align*}
where $J=\left(\begin{smallmatrix}
0&-1\\
1&0
\end{smallmatrix} \right)$.
One verifies easily that $L(t,s)=\cos(t)L_0+\sin(t)(\cos(s)L_1+\sin(s)L_2)$ is isospectral, i.e. the spectrum is given by $\mbox{spec}(L(t,s))=\{\sqrt{3},1/\sqrt{3},0,-1/\sqrt{3},-\sqrt{3}\}$, where each eigenvalue occurs with multiplicity $2$.

\smallskip

One verifies easily that for given $s\in\R$, i.e. a fixed geodesic $c$ in $L_6(p)$, the vectors
\begin{align*}
&(0, 0, -\sin(2s), -\cos(2s), 0, 0, 0, 1, 0, 0)^{tr}, \\
&(0, 0, 0, 0, \sin(s),\cos(s), 0, 0, 0, 0)^{tr}, \\
&(0, 0, 0, 0, \cos(s), -\sin(s), 0, 0, 0, 0)^{tr},\\
&(0, 0, -\cos(2s), \sin(2s), 0, 0, 1, 0, 0, 0)^{tr}
\end{align*}
constitute a basis of $E(c)$. Hence we have $\dim E(c)=4$.

\smallskip

From this we get $E=\mbox{span}\{e_3,e_4,e_5,e_6,e_7,e_8\}$,
where $e_i$ denotes the $i$-th unit vector in $\R^{10}$.
Furthermore, we obtain $L(t,s)E \not\subset E^{\perp}$.
Note that even $L_0E\not\subset E^{\perp}$. 
\end{counterexample*}

The preceding counterexample clearly shows that we can not
deduce the identity $L_0E=E^{\perp}$ as long as we just deal with
the linear isospectral family at one fixed focal submanifold.
In order to generate such an identity (which would hold if isoparametric hypersurfaces with $g=6$ and $m=2$ are indeed homogeneous) one has to analyze the interaction of the isospectral families at different focal submanifolds.

\begin{remark}
In the proof of Proposition 6.2 in \cite{mierr}, Miyaoka considers
the linear isospectral family at one fixed focal submanifold.
Furthermore, she brings the \lq global symmetry\rq \,into play.
However, she uses this identity only to prove that 
a certain subspace $W\subset T_{\overline{p}}M_6\cong\R^{10}$ actually coincides with the orthogonal complement $E^{\perp}$ of $E$ and not to show $L_0E=E^{\perp}$.
\end{remark}

Finding a successful way to analyze the interaction of the isospectral families at different focal submanifolds is one of the central problems that has to be solved in order to classify isoparametric hypersurfaces in spheres with $g=6$ and $m=2$.

\nocite{*}

\end{document}